\documentclass[reqno,a4paper]{amsart}
\usepackage[colorlinks=true,linkcolor=blue,citecolor=blue,urlcolor=blue]{hyperref}
\usepackage[utf8]{inputenc}
\usepackage[UKenglish]{babel}
\usepackage{enumerate}
\usepackage[top=3.0cm, bottom=3.0cm, left=2.5cm, right=2.5cm]{geometry}
\usepackage{graphicx}
\usepackage{amsmath}
\usepackage{amsthm}	
\usepackage{amsfonts}
\usepackage{mathrsfs}	
\usepackage{amssymb}
\usepackage{dsfont}

\theoremstyle{plain}
\newtheorem{theorem}{Theorem}[section]
\newtheorem{lemma}[theorem]{Lemma}

\newtheorem{corollary}[theorem]{Corollary}
\theoremstyle{definition}
\newtheorem{definition}[theorem]{Definition}

\newtheorem{example}[theorem]{Example}
\newtheorem{assumption}[theorem]{Assumption}

\numberwithin{equation}{section}
\numberwithin{figure}{section}

\DeclareMathOperator*{\argmin}{arg\,min}

\date{\today}
\title{Gradient Flows for Regularized Stochastic Control Problems}
\author{David \v{S}i\v{s}ka}
\address{\href{https://www.maths.ed.ac.uk}{School of Mathematics, University of Edinburgh}}
\email{D.Siska@ed.ac.uk}

\author{{\L}ukasz Szpruch}
\address{\href{https://www.maths.ed.ac.uk}{School of Mathematics, University of Edinburgh} and \href{https://www.turing.ac.uk}{The Alan Turing Institute}}
\email{L.Szpruch@ed.ac.uk}



\begin{document}	


\keywords{Gradient Flow, Stochastic Control, Pontryagin Optimality, Backward Stochastic Differential Equation}
\subjclass[2010]{93E20, 
60H30, 
37L40 
}
\maketitle

\begin{abstract}
This paper studies stochastic control problems with the action space taken to be probability measures, with the objective penalised by the relative entropy. 
We identify suitable metric space on which we construct a gradient flow for the measure-valued control process, in the set of admissible controls,  along which the cost functional is guaranteed to decrease. 
It is shown that any invariant measure of this gradient flow satisfies the Pontryagin optimality principle.
If the problem we work with is sufficiently convex, the gradient flow converges exponentially fast. 
Furthermore, the optimal measure-valued control process admits a Bayesian interpretation which means that one can incorporate prior knowledge when solving such stochastic control problems.   
This work is motivated by a desire to extend the theoretical underpinning for the convergence of stochastic gradient type algorithms widely employed in the reinforcement learning community to solve control problems. 
\end{abstract}


\section{Introduction}
\label{sec introduction}

Stochastic control problems are ubiquitous in technology and science and have been a very active area of research for over half-century \cite{krylov1980controlled,bertsekas1995dynamic,bensoussan2004stochastic,bensoussan2011applications,fleming2006controlled,carmona2018probabilistic}. 
Two classical approaches to tackle the (stochastic) control problem are dynamic programming principle and Pontryagin's optimality principle. 
Either can be used to establish existence (and possibly uniqueness) of solutions to the control problem (this, depending on the context, means either the value function or the optimal control). 
Either approach can form basis for the derivation of approximation methods.
Numerical approximations are almost always needed in practice and rarely scale well with the dimension of the problem at hand. 
Indeed, the term ``curse of dimensionality'' (computational effort growing exponentially with dimension) was coined by R. E. Bellman when considering problems in dynamic optimisation~\cite{bellman1966dynamic}.

This work provides a new perspective by establishing a connection between stochastic control problems and the theory of gradient flows on the space of probability measures and shows how they are fundamentally intertwined.
The connection is reminiscent of stochastic gradient type algorithms widely used in the reinforcement learning community to solve high-dimensional control problems~\cite{doya2000reinforcement, sutton2018reinforcement}. 
We also refer the reader to \cite{gobet2021newton,chassagneux2021learning,kerimkulov2020exponential,kerimkulov2021modified,ito2021neural} for recent work on iterative algorithms for stochastic control problems. 

Our work builds on~\cite{wang2019exploration,reisinger2020regularity} and \cite{ziebart2010modeling,geist2019theory} where entropy-regularised stochastic control problems have been considered in continuous and discrete time settings. 
Further results on entropy regularised control problems can be found in~\cite{tang2022exploratory} and~\cite{huang2022convergence}. 

\subsection{Problem Formulation}
\label{sec problem formulation}

Let $(\Omega^W, \mathcal F^W, \mathbb P^W)$ be a probability space and let $(\mathcal F^W_t)_{t\in[0,T]}$ be a filtration on this space satisfying the usual conditions. 
Let $W$ be a 
$d'$-dimensional Wiener process on this space which is also a martingale w.r.t. the filtration $(\mathcal F^W_t)_{t\in[0,T]}$.
The expectation with respect to the measure $\mathbb P^W$ will be denoted $\mathbb E^W$. 
Given some metric space $\mathcal X$ and $0 \leq q <\infty$, let $\mathcal P_q(\mathcal X)$ denote the set of probability measures defined on $\mathcal X$ with finite $q$-th moment.
Let $\mathcal P_0(\mathcal X)=\mathcal P(\mathcal X)$, the set of probability measures and let $\mathscr M(\mathcal X)$ denote the set of measures on $\mathcal X$. 
Let $\Lambda$ denote the Lebesgue measure.
Throughout the paper we will abuse notation and won't distinguish between a measure and its density (provided it exists). 
For $q\geq 2$ let
\[
 \mathcal M_q \!\!:= \!\!\left\{ \nu \in \mathscr M([0,T] \!\!\times\!\! \mathbb R^p) \!:\!  \text{$\Lambda([0,T])$-a.e.}
\, \exists \nu_t \in \mathcal P(\mathbb R^p)\,\,\text{s.t.}\,\,  \nu(dt,da)=\nu_t(a)\,da\,dt \,, \,\, 
\textstyle{\int_{0}^{T}\!\!\!\int} |a|^q\,\nu_t(da,dt) < \infty \right\}	\\
\]
and
\begin{equation}
\label{eq def Vq}
\begin{split}
\mathcal V_q & := \left\{ \nu : \Omega^W \to  \mathcal M_q : \mathbb E^W\textstyle{\int_{0}^{T}\int} |a|^q\,\nu_t(da,dt) < \infty\right\} \,,\\
\mathcal V_q^W &  := \left\{\nu \in \mathcal V_q :  \text{$(\nu_t)_{t\in[0,T]}$ is progressively measurable w.r.t. $(\mathcal F_t^W)_{t\in[0,T]}$}\right\}\,,\\
\end{split}
\end{equation}
where here and elsewhere, any integral without an explicitly stated domain of integration is over $\mathbb R^p$
 and where we say that $\nu \in \mathcal V_q$ is progressively measurable if for every Borel set $B$ we have that $(\omega,t) \mapsto\nu_t(\omega,B)$ is progressively measurable.
Note that we require that $\mathbb P^W \otimes \Lambda([0,T])$-almost everywhere $\nu\in\mathcal V_q$ has first marginal equal to the Lebesgue measure and the second marginal 
absolutely continuous with respect to the Lebesgue measure i.e. $\nu(\omega)(dt,da) = \nu_t(a)\,da\,dt$.
For $\xi \in \mathbb R^d$ and $\nu \in \mathcal V_2^W$, consider
the controlled process
\begin{equation}
\label{eq process} 
X_t(\nu) = \xi + \int_0^t \Phi_r(X_r(\nu), \nu_r)\,dr
+ \int_0^t \Gamma_r(X_r(\nu),\nu_r)\,dW_r \,,\,\,\, t\in [0,T]\,.
\end{equation}
Fix $m'\in \mathcal P(\mathbb R^p)$ which is absolutely continuous w.r.t the Lebesgue measure. 
We will use $\infty$ to denote the positive infinity.
Let us now define the relative entropy $R:\mathcal P(\mathbb R^p)\times \mathcal P(\mathbb R^p) \to \mathbb R\cup \{\infty\}$ as follows:
for $m \in \mathcal P(\mathbb R^p)$ that are not absolutely continuous with respect to the Lebesgue measure let $R(m|m') = \infty$ while for the  absolutely continuous ones (so that we can write $m(da) = m(a)\,da$)
let
\begin{equation}
\label{eq Ent}
R(m|m') := \int \left[ \log m (a) - \log m'(a)\right]m(a)\,da\,.	
\end{equation}
Let $F$ and $g$ be given.
Let $U=(U_t)_{t\in[0,T]}$ be an $\mathbb R^p$-valued progressively measurable process such that with $\gamma_t = e^{U_t}$ we have $\gamma = (\gamma_t)_{t\in [0,T]} \in \mathcal V^W_2$.
We define the entropy regularized objective functional
\begin{equation}
\label{eq objective bar J}   
\begin{split}
J^\sigma(\nu) & := \mathbb E^{W}\left[\int_0^T \bigg[ F_t(X_t(\nu),\nu_t) + \tfrac{\sigma^2}{2}R(\nu_t|\gamma_t)\bigg]\,dt + g(X_T(\nu)) \bigg| \nu \right]\,.
\end{split}
\end{equation}
Our aim is to minimize, for some fixed $\sigma > 0$ and $\xi$, the objective functional $J^\sigma$ over $\mathcal V_2^W$
subject to the controlled process $X(\nu)$ satisfying~\eqref{eq process}.
Note that if we permitted $\nu_t(\omega)$ singular w.r.t. Lebesgue measure for all $(t,\omega)\in B \in \mathcal B([0,T])\otimes \mathcal F^W$ with $B$ of non-zero measure in the set over which we are minimizing then for such $\nu$ we would have $J^\sigma = \infty$. 
In other words such ``singular'' $\nu$ will never be optimal for the regularized problem. 
So not allowing such controls leads to no loss of generality.
Our setup encompasses the setting of stochastic relaxed controls that dates back to the work of L.C. Young on generalised solutions of problems in the calculus of variations~\cite{young2000lectures}.  

Historically, in control theory, measure-valued controls have been used as a mathematical tool for proving the existence of solutions to the relaxed control problems associated with the original strict control problem.
On the other hand, in the theory of Markov Decision Processes (MDP) it is common to seek solutions within a class of probability measures, \cite{bertsekas2004stochastic,sutton2018reinforcement} as this often improves stability and efficiency of algorithms used to solve the MDP. 
It is only very recently that regularised relaxed control problems have been studied in the differential control setting, see~\cite{wang2019exploration} where the regularised relaxed linear-quadratic stochastic control problem is studied in great detail.   
In~\cite{reisinger2020regularity} it is proved that for sufficiently regularised stochastic control problems, the optimal Markov control function is smooth. 
This is in sharp contrast to unregularized control problems for which controls are often discontinuous (only measurable) functions.

Let us now introduce the Hamiltonians
\begin{equation}
\label{eq hamiltonian}
\begin{split}
H^0_t(x,y,z,m) & := \Phi_t(x,m)y + \text{tr}(\Gamma_t^\top(x,m) z) + F_t(x,m)\,, 	\\
H^\sigma_t(x,y,z,m,m') & := H^0_t(x,y,z,m) + \tfrac{\sigma^2}{2}R(m|m')\,.	\\
\end{split}
\end{equation}
 We work with Pontryagin's optimality principle and hence we use the adjoint processes
\begin{equation}
\label{eq adjoint proc}
\begin{split}
dY_t(\mu) & =-(\nabla_x H^0_t)(X_t(\mu),Y_t(\mu),Z_t(\mu),\mu_t)\,dt + Z_t(\mu)\,dW_t \,,\,\,\,t\in [0,T]\,,\,\,\,
Y_T(\mu) = (\nabla_x g)(X_T(\mu))\,
\end{split}
\end{equation}
and note that (trivially) $\nabla_x H^0 = \nabla_x H^\sigma$.
Under the assumptions we postulate in this work, for each $\xi \in \mathbb R^d$ and $\mu \in \mathcal V_2^W$ a unique solution $ Y(\mu)$ and $Z(\mu)$ exists, see Lemma \ref{lemma existence for fixed control}.

The necessary condition for optimality which we formulate precisely in Theorem \ref{thm necessary cond linear} says that if $\nu\in \mathcal V^W_{2}$ is (locally) optimal for $J^\sigma$, $X(\nu)$ and $Y(\nu)$, $Z(\nu)$ are
the associated optimally controlled state and adjoint processes respectively,
then for a.e. $(\omega,t)\in \Omega^W \times (0,T)$ $\nu_t$  locally minimises (point-wise) $H^\sigma (X_t(\nu),Y_t(\nu),Z_t(\nu),\nu,\gamma)$. 
In many cases the solution to this optimisation problem cannot be found explicitly. Motivated by the success of various gradient descent algorithms used in reinforcement learning to solve control problems~\cite{doya2000reinforcement, sutton2018reinforcement} our contribution here is to construct an appropriate gradient flow and study its convergence to the optimal solution of the stochastic control problem \eqref{eq process}--\eqref{eq objective bar J}.
Since the Hamiltonian $H^\sigma$ is optimised over (probability) measures we will require a notion of the linear functional derivative, see~\cite[Definition 5.43]{carmona2018probabilistic},
and will define, for $\mu \in \mathcal V_2^W$, $t\in [0,T]$, $a\in \mathbb R^p$ and $\xi \in \mathbb R^d$:
\begin{equation}
\label{eq fat h}
\tfrac{\delta \mathbf H_t^{0}}{\delta m}(\mu,a):=\tfrac{\delta H_t^{0}}{\delta m}(X_t(\mu), Y_t(\mu), Z_t(\mu), \mu_t, a)\,.
\end{equation}
Note that the stochastic process $(\tfrac{\delta \mathbf H_t^{0}}{\delta m}(\mu,a))_{t\in [0,T]}$ is $(\mathcal F_t^W)_{t\in[0,T]}$-adapted for every $\mu \in \mathcal V_2^W$ and $a\in \mathbb R^p$. 
Moreover, let 
\begin{equation}
\label{eq fat h sigma}
\tfrac{\delta \mathbf H_t^{\sigma}}{\delta m}(\mu,a) :=
\tfrac{\delta \mathbf H_t^{0}}{\delta m}(\mu,a) + \tfrac{\sigma^2}{2} \left( U_t(a) +  \log \mu_{t}(a)  \right)\,.
\end{equation}
Note that, in~\eqref{eq fat h sigma}, the right-hand side is not always the linear functional derivative as the entropy, being only lower semicontinuous, does not have a flat derivative on the entire $\mathcal P(\mathbb R^p)$. 
However this abuse of notation makes clear what role the left-hand side of~\eqref{eq fat h sigma} plays and, as long as it is only applied on the appropriate gradient flow, this notational choice is justified by Lemma \ref{lemma derivative of entropy along flow}.

\subsection{Heuristic derivation of the gradient flow}
 
Recall that $t\in [0,T]$ is the time associated with our control problem.
We introduce a new time $s\in [0,\infty)$ which will be the gradient flow time.
Our aim is to minimize $J^{\sigma}:\mathcal V^{W}_2 \rightarrow \mathbb R$  defined in~\eqref{eq objective bar J} using some gradient flow equation for the evolution of $\nu_{s,\cdot} \in \mathcal V_2^W$.  
More precisely, let $E:[0,\infty) \times \Omega^W \times [0,T] \times  \mathbb R^p \rightarrow \mathbb R^p$ be a vector field depending on the gradient flow time $s\in [0,\infty)$, the control problem time $t\in [0,T]$ and $\omega^W \in \Omega^W$ and which is such that $E_{s,t}(a)$ is $\mathcal F_t^W$-measurable so that $\nu_{s,\cdot}$ can be admissible. 
Dynamic representation of the Wasserstein distance due to Benamou and Brenier \cite{benamou2000computational}, suggest to consider the continuity equation 
\begin{equation}
\label{eq continuity eq}
\partial_s \nu_{s,t} = \nabla_a \cdot \left( E_{s,t} \nu_{s,t} \right)\,,\,\,\, s\in [0,\infty)\,,\,\,\, \nu_{0,t} 	\in \mathcal P_2(\mathbb R^p)\,.
\end{equation}
We wish to identify the vector field $E$ so that $J^{\sigma}(\nu_{s,\cdot})$ decreases as $s$ increases. 
We do this by following the method of ``free energy'' dissipation studied with Otto calculus as presented in~\cite[Section 3]{OV00} and~\cite[Chapter 15]{villani2008optimal}. 

For fixed $\varepsilon,\lambda>0$ let $\nu_s^{\lambda,\varepsilon}:=  \nu_s + \lambda(\nu_{s+\varepsilon} - \nu_s)$.
Using the notion of linear functional derivative we see that 
\[
\partial_s J^{\sigma}(\nu_{s,\cdot}) = \lim_{\varepsilon \rightarrow 0} \varepsilon^{-1}\bigg( J^{\sigma}(\nu_{s + \varepsilon,\cdot}) - J^{\sigma}(\nu_{s,\cdot} )\bigg) 
 =  \lim_{\varepsilon \rightarrow 0} \varepsilon^{-1}\bigg( \int_0^1 \mathbb E^W\int_0^T \int \tfrac{\delta J^{\sigma}}{\delta \nu}(\nu^{\lambda,\varepsilon}_{s,\cdot},a) (\nu_{s+\varepsilon,\cdot} - \nu_{s,\cdot} )  \,dt\, d\lambda\bigg)\,.
\]
Due to Lemma~\ref{lemma der of J as hamiltonian flat} and ignoring momentarily that entropy is only lower semi-continuous, one can deduce that
\[
\partial_s J^{\sigma}(\nu_{s,\cdot})
 =  \lim_{\varepsilon \rightarrow 0} \varepsilon^{-1}\bigg( \int^{1}_0 
\mathbb E^W \int_0^T \left[  \int  \tfrac{\delta \mathbf  H^{\sigma}}{\delta m}(\nu^{\lambda,\varepsilon}_{s,t},a) (\nu_{s+\varepsilon,t} - \nu_{s,t} ) (da)\right] dt d\lambda\bigg) \,.
\]
Continuity of $\tfrac{\delta \mathbf  H^{\sigma}}{\delta \nu}$ in measure,  and the fact that for all $\lambda>0$ we have $\nu_s^{\lambda,\varepsilon} \rightarrow \nu_s$  as $\varepsilon \rightarrow 0$, yields
\[
\begin{split}
&\partial_s J^{\sigma}(\nu_{s,\cdot}) 
 =  
\mathbb E^W \int_0^T \left[  \int  \tfrac{\delta \mathbf  H^{\sigma}}{\delta m}(\nu_{s,t},a) \partial_s \nu_{s,t}  (da)\right] dt 
= \mathbb E^W \int_0^T \left[  \int  \tfrac{\delta \mathbf  H^{\sigma}}{\delta m}(\nu_{s,t},a) \,\,  \nabla_a \cdot \left( E_{s,t} \nu_{s,t} \right) (da)\right] dt \,.
\end{split}
\]
Formal integration by parts implies that
\[
\begin{split}
&\partial_s J^{\sigma}(\nu_{s,\cdot}) 
 =  - \mathbb   E^W \int_0^T \left[  \int  (\nabla_ a \tfrac{\delta \mathbf  H^{\sigma}}{\delta m})(\nu_{s,t},a)  \left( E_{s,t} \nu_{s,t} \right) (da)\right] dt \,.
\end{split}
\]
Hence if we take $E_{s,t} := (\nabla_ a \tfrac{\delta \mathbf  H^{\sigma}}{\delta m})(\nu_{s,t},\cdot)$
then 
\[
\partial_s J^{\sigma}(\nu_{s,\cdot}) 
 =  - \mathbb   E^W \int_0^T \bigg[  \int  \bigg|\bigg(\nabla_ a \tfrac{\delta \mathbf  H^{\sigma}}{\delta m}\bigg)(\nu_{s,t},a)\bigg|^2 \nu_{s,t}(da)\bigg] dt \leq 0.
\]
From the definition of $\tfrac{\delta \mathbf  H^{\sigma}}{\delta \nu}$ in \eqref{eq fat h sigma}, the continuity equation~\eqref{eq continuity eq} can thus be written as
\begin{equation} \label{eq pde heuristic}
\partial_s \nu_{s,t} 
= \nabla_a \cdot \bigg(\left((\nabla_ a \tfrac{\delta \mathbf  H^{0}}{\delta m})(\nu_{s,t},\cdot) + \tfrac{\sigma^2}{2}(\nabla_a U_t) \right) \nu_{s,t} + \tfrac{\sigma^2}{2} \nabla_a \nu_{s,t} \bigg)  \,.	
\end{equation}
Thus we see that for each $(\omega^W, t)\in \Omega^W \times [0,T]$ the Hamiltonian~\eqref{eq hamiltonian} can be viewed as the potential function of the gradient flow~\eqref{eq pde heuristic}. 
If $\nu^\ast$ is a stationary solution to~\eqref{eq pde heuristic} then clearly $(\nabla_ a \tfrac{\delta \mathbf  H^{\sigma}}{\delta m})=0$ $\nu^\ast$-a.s. and hence $\nu^\ast$
satisfies the first order condition 
\begin{equation}
\label{eq foc intro}
\tfrac{\delta \mathbf  H^{\sigma}}{\delta m}(\nu^{\ast}_{s,t},a) \,\,\, \text{is a constant    } \nu^{\ast}-a.s\,.	
\end{equation}
This implies that $\nu^\ast$ is a Gibbs measure in the sense that it satisfies the equation 
\begin{equation} 
\label{eq nuast heuristic}
\nu_t^\ast(a) = \mathcal Z^{-1}_t e^{ -\frac{2}{\sigma^2} \tfrac{\delta \mathbf H_t^{0}}{\delta m}(\nu^\ast,a)}\gamma_t(a)\,,\quad \mathcal Z_t := \int e^{ -\frac{2}{\sigma^2} \tfrac{\delta \mathbf H_t^{0}}{\delta m}(\nu^\ast,a)}\gamma_t(a)da
\end{equation}
for a.e. $(\omega^W, t) \in \Omega^W \times (0,T)$.
Moreover such a stationary solution ought to be a local minimizer of $J^\sigma$ since $\partial_s J^\sigma(\nu^\ast) = 0$. 
The optimal control for \eqref{eq process}-\eqref{eq objective bar J} solves equation \eqref{eq nuast heuristic} and hence enjoys a Bayesian interpretation with $\gamma$ and $\nu^\ast$ being the prior and the posterior distributions, respectively.

In Theorem~\ref{thm unique minimiser} we show that 
if the gradient flow converges, then the limit point satisfies the first order condition~\eqref{eq nuast heuristic} and moreover we show that if the invariant measure is unique then $\nu^\ast$ is the unique global minimiser of the objective functional~\eqref{eq objective bar J}. 
These results are proved under mild assumptions: in essence those required for the necessary conditions for optimality arising from Pontryagin's maximum principle stated in Theorem~\ref{thm necessary cond linear}.
Theorem~\ref{thm conv to inv meas rate} tells us that for sufficiently convex stochastic control problems or if the entropic regularisation is sufficiently strong then the gradient flow converges exponentially to the unique invariant measure.
This work extends the analysis in~\cite{hu2019meanode} and~\cite{jabir2019mean}, that considered the gradient flow perspective for solving differential control problems with ODE dynamics and was motivated by the desire to develop a mathematical theory of deep learning. 

\subsection{Probabilistic representation}
Next we explain how the gradient flow \eqref{eq pde heuristic} relates to a familiar noisy gradient descent. For each fixed $t\in [0,T]$ and $\omega \in \Omega^W$ we know that the Kolmogorov--Fokker--Planck equation~\eqref{eq fpe} has a stochastic representation which we will introduce below.
Let there be $(\Omega^B, \mathcal F^B, \mathbb P^B)$ equipped with a $\mathbb R^p$-Brownian motion $B=(B_s)_{s\geq 0}$ and the filtration $\mathbb F^B = (\mathcal F_t^B)$ where
$\mathcal F_s^B := \sigma(B_u:0\leq u \leq s)$.
Let $(\Omega^0,\mathcal F^0,\mathbb P^0)$ be a probability space on which $\theta^0 = \theta^0_t(\omega^W,\cdot)$ and $\theta'^0 = \theta'^0_t(\omega^W,\cdot)$ are random variables for each $(\omega^W, t)$. 
Let $\Omega := \Omega^W \times \Omega^0 \times \Omega^B$, $\mathcal F := \mathcal F^W \otimes \mathcal F^0 \otimes \mathcal F^B$
and $\mathbb P := \mathbb P^W \otimes \mathbb P^0 \otimes \mathbb P^B$.

We will use $\mathcal L(\cdot |\mathcal F^W_t)$ to denote the conditional law of a random variable on $(\Omega, \mathcal F, \mathbb P)$ conditioned on $\mathcal F^W_t$.
Let $(\theta^0_t)_{t\in [0,T]}$ be an $(\mathcal F_t^W)$-adapted, $\mathbb R^p$-valued stochastic process on $(\Omega,\mathcal F, \mathbb P)$ s.t. $\mathbb E\int_0^T|\theta^0_t|^2\,dt < \infty $ so that $(\mathcal L(\theta_{t}^0 | \mathcal F_t^W))_{t\in [0,T]} \in \mathcal V_2^W$ and consider 
\begin{equation}
\label{eq mfsgd}
	d \theta_{s,t}  =  - \Big((\nabla_a \tfrac{\delta  H_t^{0}}{\delta m})(X_{s,t}, Y_{s,t}, Z_{s,t}, \nu_{s,t}, \theta_{s,t})
+ \tfrac{\sigma^2}{2}(\nabla_a U)(\theta_{s,t}) \Big)\,ds + \sigma dB_s\,, \quad s \geq 0\,,\,\,\,\theta_{0,t} = \theta_t^0\,,
	\end{equation}
where
\begin{equation}\label{eq mfsgd 2}
\begin{cases}
\nu_{s,t} & = \mathcal L(\theta_{s,t} | \mathcal F_t^W )\,, \\
	X_{s,t}(\nu) &= \xi + \int_0^t \Phi_r(X_{s,r}(\nu),\nu_{s,r})\,dr
+ \int_0^t  \Gamma_r(X_{s,r}(\nu),\nu_{s,r}(da))\,dW_r \,,\,\,\, t\in [0,T]\,, \\
d Y_{s,t}(\nu) & = - (\nabla_x H^{0}_t)(X_{s,t}(\nu),Y_{s,t}(\nu),Z_{s,t},\nu_{s,t})\,dt + Z_{s,t}(\nu)\,dW_t \,,\,\,\,\, 
Y_{s,T}(\nu)  = (\nabla_x g)(X_T(\nu))\,.
\end{cases}
\end{equation}
In what follows we write $(X(\nu),Y(\nu),Z(\nu))=(X,Y,Z)$ if the dependence on $\nu$ is clear from the context. 
Notice that \eqref{eq mfsgd 2} is a decoupled FBSDE system in the sense that at any $s\geq 0$ the forward process $(X_{s,t}(\nu))_{t\in [0,T]}$ doesn't depend on $(Y_{s,t}(\nu))_{t\in [0,T]}$ or $(Z_{s,t}(\nu))_{t\in [0,T]}$ directly, only through the control $\nu_{s,\cdot}$. 
The implication of this decoupling becomes clear when one considers an explicit time discretisation of~\eqref{eq mfsgd}, as we shall describe below, or an implicit time discretisation of~\eqref{eq mfsgd}, which would correspond to a modified method of successive approximations (MSA) algorithm, see e.g.~\cite{kerimkulov2021modified}.

The gradient flow, together with probabilistic numerical methods, provides a basis for a new class of algorithms for solving data-dependent stochastic control problems in high dimensions.  
Indeed, the system \eqref{eq mfsgd}--\eqref{eq mfsgd 2} can be used to design a complete algorithm as follows. First one could approximate \eqref{eq mfsgd} by a discrete time particle system, say $(\theta^i_{s_k,t})_k $ for $i=1,\ldots, N$.  
Given $\theta^i_{s_k,t} $ for $i=1,\ldots, N$ one computes $(\nu_{s_k,t}^N)_k$ where $\nu_{s_k,t}^N= \frac{1}{N}\sum_{i=1}^N\delta_{\theta^i_{s_k,t} }$. 
At the next step one would then solve forward process $(X_{s_k,t}(\nu_{s_k,\cdot}^N))_k$ on $[0,T]$ and then ``back-propagate'' $(Y_{s_k,t}(\nu_{s_k,\cdot}^N))_k$ on $[0,T]$. 
Finally one would update the gradient term. 
At this point one can obtain the next value of the particle system approximating the control: $\theta^i_{s_{k+1},t}$. 
In this work we do not study this numerical approximation but refer a reader to \cite{jabir2019mean} where this has been analysed for the deterministic control problem with gradient flow approximated by Euler scheme and usual interacting particle system. 
We refer the reader to~\cite{chassagneux2019weak,szpruch2019antithetic,delarue2021uniform} for recent progress on particle approximations for related McKean--Vlasov SDEs.

This paper is organised as follow: 
in Section~\ref{sec assumptions and results} we state the assumptions and announce the main results proved in this paper.
Section~\ref{sec proof of opt cond} is devoted to the proof of Pontryagin optimality condition in the setting of this paper. 
In Section~\ref{sec ex uniq and conve} we prove that the gradient flow system has unique solution and converges to invariant measure. 

\section{Assumptions and Results}
\label{sec assumptions and results}

There are two main sets of results. 
In the first, we use the necessary condition arising from the Pontryagin optimality principle to characterise the optimal control
and we show that if the gradient system~\eqref{eq mfsgd}-\eqref{eq mfsgd 2} converges to an invariant measure $\mu^\ast \in \mathcal V_2^W$ then this measure is an optimal control. 
In the second, we will present conditions for existence of solutions to the gradient system~\eqref{eq mfsgd}-\eqref{eq mfsgd 2} and conditions for its convergence to the invariant measure $\mu^\ast \in \mathcal V_2^W$.

\subsection{Characterisation of the optimal control}
We begin by formalising the definition of gradient flow PDEs.

\begin{definition}\label{def vect field flow def}
We will say that $b:[0,\infty)\times  [0,T] \times \Omega^W \times \mathbb R^p \to \mathbb R^p$ is a {\em permissible flow} if for a.e. $(\omega^W,t)$ we have
 $b_{\cdot,t}(\omega^W,\cdot) \in C^{0,1}([0,\infty)\times\mathbb R^p; \mathbb R^p)$ and for all $s$ and a.e. $(\omega^W,t)$ the function $a\mapsto b_{s,t}(\omega^W,a)$ is of linear growth and for any $s\geq 0$ and $a\in\mathbb R^p$ the random variable $b_{s,t}(a)$ is $\mathcal F_t^W$-measurable.  	
\end{definition}
We do not expect $b=b_{s,t}(\omega^W,a)$ to have any regularity in  $(\omega^W,t)$.
Indeed from the heuristic derivation in Section~\ref{sec introduction} it is clear that this term will involve the gradient of the flat derivative of the Hamiltonian for every $(\omega^W,t)$.

\begin{lemma}
\label{lem pde}
If $b$ is a permissible flow (cf. Definition~\ref{def vect field flow def}) then the linear PDE 	
\begin{equation}
\label{eq fpe}
\partial_s \nu_{s,t} = \nabla_a \cdot \left( b_{s,t} \nu_{s,t} + \tfrac{\sigma^2}{2}\nabla_a \nu_{s,t}\right)\,,\,\,\, s\in [0,\infty)\,,\,\,\, \nu_{0,t} 	\in \mathcal P_2(\mathbb R^p)\,
\end{equation}
has  unique solution $\nu_{\,\cdot,t} \in C^{1,\infty}((0,\infty)\times \mathbb R^p; \mathbb R)$ for each $t\in [0,T]$ and $\omega^W \in \Omega^W$.
Moreover for each $s > 0$, $t\in [0,T]$  and $\omega^W \in \Omega^W$ we have $\nu_{s,t}(a) > 0$ and $\nu_{s,t}(a)$ is $\mathcal F_t^W$-measurable. 
\end{lemma}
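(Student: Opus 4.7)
The plan is to represent the solution of \eqref{eq fpe} as the time marginal of an auxiliary diffusion and then translate the five claims (existence, uniqueness, smoothness, positivity, and $\mathcal F_t^W$-measurability) into standard facts about that diffusion and about linear uniformly parabolic equations.

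Fix $t\in[0,T]$ and $\omega^W\in\Omega^W$. On an auxiliary probability space carrying a Brownian motion $\tilde B$ independent of a $\nu_{0,t}$-distributed random variable $A_0$, consider
\begin{equation*}
dA_s=-b_{s,t}(A_s)\,ds+\sigma\,d\tilde B_s,\qquad A_0\sim\nu_{0,t}.
\end{equation*}
Because $b_{\cdot,t}(\omega^W)$ is continuous in $s$, $C^1$ (hence locally Lipschitz) in $a$, and of linear growth, classical SDE theory yields a unique strong solution with moments of every order. Applying It\^o's formula to test functions $\varphi\in C_c^\infty(\mathbb R^p)$ and taking expectations shows that the marginals $\nu_{s,t}:=\mathrm{Law}(A_s)$ solve \eqref{eq fpe} in the weak sense with initial datum $\nu_{0,t}$. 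Since the diffusion coefficient is nondegenerate and constant, parabolic smoothing (via Malliavin calculus for $A_s$, or directly via Duhamel iteration against the Gaussian heat kernel) gives a density that is smooth in $a$ for every $s>0$; rewriting \eqref{eq fpe} in non-divergence form as $\partial_s\nu=\frac{\sigma^2}{2}\Delta\nu+b\cdot\nabla_a\nu+(\nabla_a\cdot b)\,\nu$ and bootstrapping then yields the required $C^{1,\infty}$ regularity in $(s,a)$.

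Uniqueness in the appropriate class follows from an energy estimate: the difference $w$ of two solutions sharing initial data satisfies the same linear parabolic equation with zero initial condition, and testing against $w$, integrating in $a$, and combining the coercivity of the Laplacian with the Lipschitz bound on $b$ produces a Gronwall inequality forcing $w\equiv 0$. Strict positivity $\nu_{s,t}(a)>0$ for $s>0$ follows from an Aronson-type Gaussian lower bound on the density of $A_s$, or equivalently by writing $A$ as a Girsanov perturbation of $A_0+\sigma\tilde B_\cdot$ and transferring the strict positivity of the Gaussian density.

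For $\mathcal F_t^W$-measurability of $\omega^W\mapsto\nu_{s,t}(a)$, I would proceed by approximation. Truncating and mollifying $b_{\cdot,t}$ in $a$ produces a sequence of bounded Lipschitz drifts $b^n$ whose values at every $(s,a)$ are $\mathcal F_t^W$-measurable and which converge to $b_{\cdot,t}$ locally uniformly in $(s,a)$. For each $n$ the strong SDE solution depends measurably (indeed continuously) on its coefficients, so the associated density $\nu^n_{s,t}(a)$ is $\mathcal F_t^W$-measurable in $\omega^W$; passing to $n\to\infty$ and using $L^1$-stability of the marginal laws under locally uniform convergence of Lipschitz, linearly growing drifts, the required measurability is inherited. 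The main obstacle I anticipate is precisely this coordination between pathwise SDE/PDE analysis and measurability in $\omega^W$: the parabolic smoothing and the strict Gaussian lower bound must be shown compatible with the merely measurable dependence of $b$ on $\omega^W$, which is exactly what the above approximation argument is designed to achieve.
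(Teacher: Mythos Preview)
Your approach is correct and overlaps with the paper's in the key place: both use the auxiliary diffusion $dA_s=-b_{s,t}(A_s)\,ds+\sigma\,d\tilde B_s$ together with Girsanov to obtain the strict positivity of the density, and then identify this law with the PDE solution via uniqueness.

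Where you differ is in how the remaining claims are dispatched. The paper is citation-driven: existence, uniqueness and $C^{1,\infty}$ regularity are read off directly from classical linear parabolic theory (Lady\v{z}enskaja--Solonnikov--Ural'ceva, Chapter~IV), and $\mathcal F_t^W$-measurability is obtained by invoking a general result on measurable dependence of solutions of parabolic equations on data (a lemma from Emmrich et al.). You instead build existence from the SDE law, recover regularity by parabolic bootstrapping/Duhamel, prove uniqueness by an $L^2$ energy estimate, and obtain measurability through a coefficient-approximation argument. Your route is more self-contained and makes the mechanism transparent, at the cost of having to verify several technical points (integrability at infinity in the energy estimate, stability of the laws under the approximation) that the paper sidesteps by quoting references. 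Conversely, the paper's approach is shorter but hinges on the cited results applying verbatim to drifts that are only $C^{0,1}$ in $(s,a)$ with linear growth; your hands-on argument is arguably more robust to that level of regularity.
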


The proof of Lemma~\ref{lem pde} will be given in Section~\ref{sec proof of opt cond}.

\begin{assumption}[For characterisation of the optimal control] \label{as coefficients new}
Let $G:[0,T]\times \mathbb R^d \times \mathcal P_2(\mathbb R^p) \to \mathbb R^k$ stand for any of $\Phi$, $\Gamma$ or $F$ with $k=d$, $k=d\times d'$ or $k=1$ respectively. 
Let $K>0$ be given.  
\begin{enumerate}[i)]
\item For all $t\in [0,T]$ we have $|G_t(0,\delta_0)|\leq K$ and $|g(0)|\leq K$.
\item The function $G$ is differentiable in $x$ for every $(t,m) \in [0,T]\times \mathcal P_2(\mathbb R^p)$. 
The derivatives are jointly continuous.
\item For all $(t,x,m) \in [0,T]  \times \mathbb R^d \times \mathcal P_2(\mathbb R^p)$ we have   
$
|\nabla_x \Phi_t(x,m)| + |\nabla_x \Gamma_t(x,m)| \leq K\,.
$
\item For all $(t,x,x',m) \in [0,T]  \times \mathbb R^d \times \mathbb R^d \times \mathcal P_2(\mathbb R^p)$ we have   
\[
|\nabla_x \Phi_t(x,m)-\nabla_x \Phi_t(x',m)| + |\nabla_x \Gamma_t(x,m) - \nabla_x \Gamma_t(x',m)| \leq K|x-x'|\,.
\]

\item For each $(t,x,m) \in [0,T] \times \mathbb R^d \times \mathcal P_2(\mathbb R^p)$ the linear functional derivative $\tfrac{\delta G_t}{\delta m}$ exists and is jointly continous. 	
\item  For each $(t,x,a) \in [0,T] \times \mathbb R^d \times \mathbb R^p$ the linear functional derivatives $\tfrac{\delta^2 G_t}{\delta m^2}$ exist and for all $(t,x,a,a') \in [0,T] \times \mathbb R^d \times \mathbb R^p \times \mathbb R^p$ we have
$
\left|\tfrac{\delta^2 G_t}{\delta m^2}(x,m,a,a')\right|\leq K\,.
$
\item The function $g$ is twice differentiable and for all $x \in \mathbb R^d$ we have $| \nabla^2_x g(x)| \leq K$.
\item For all $(t,m,a)\in  [0,T]  \times \mathcal P_2(\mathbb R^p)\times \mathbb R^p $ the function $\tfrac{\delta F_t}{\delta m}$ is continuously differentiable in $x$ and for all $(t,x,m,a)\in  [0,T]  \times \mathbb R^d \times \mathcal P_2(\mathbb R^p)\times \mathbb R^p $ we have 
$
\left|\nabla_x\tfrac{\delta F_t}{\delta m}(x,m,a) \right| \leq K\,.
$

\end{enumerate}
	
\end{assumption}

Note that Assumption~\ref{as coefficients new}, i), ii), iii) and iv) imply that the coefficients of~\eqref{eq process} are Lipschitz continuous in $x$ uniformly in 
$(t,m) \in [0,T]\times \mathcal P_2(\mathbb R^p)$ and that they have linear growth in $x$ uniformly in 
$(t,m) \in [0,T]\times \mathcal P_2(\mathbb R^p)$.

\begin{lemma}
\label{lemma existence for fixed control}
Let Assumption~\ref{as coefficients new} hold.   
Then for any $q\geq 2$ and $\mu\in \mathcal V_q^W$ and $\xi \in \mathbb R^d$ the equation~\eqref{eq process} has a unique solution $X(\mu)$ which is adapted to $(\mathcal F_t^W)_{t\in [0,T]}$ and 
there is $c>0$ such that 
\[
\sup_{\mu\in \mathcal V_2^W}\mathbb E^W\bigg[\sup_{t\in[0,T]}|X_{t}(\mu)|^q \bigg] <  c(1 +|\xi|^q )\,.
\]
Moreover~\eqref{eq adjoint proc} has unique solution $(Y(\mu),Z(\mu))$ which is adapted to $(\mathcal F_t^W)_{t\in [0,T]}$ and  
 $Y(\mu)\in L^2((0,T)\times \Omega; \mathbb R^d)$ and $Z(\mu) \in L^2((0,T)\times \Omega; \mathbb R^{d\times d'})$.
\end{lemma}
We will not prove Lemma~\ref{lemma existence for fixed control} since the existence of a unique solution to~\eqref{eq process} and the moment bound state above is classical and 
can be found e.g. in Krylov~\cite{krylov1980controlled}. 
The adjoint equation~\eqref{eq adjoint proc} is affine, hence the coefficients are Lipschitz continuous in $y$ and $z$. 
Due to Assumption \ref{as coefficients new} and utilising the moment bound for $X$, we get existence, uniqueness and the stated integrability 
from e.g. from Zhang~\cite[Th 4.3.1]{zhang2017backward}.

Let us introduce the following metrics and spaces. First, for $\mu, \mu' \in \mathcal M_q$ let
\begin{equation}
\label{eq def of rho}
\mathcal W_q^T(\mu,\mu') := \bigg(\int_0^T \mathcal W_q(\mu_t,\mu_t')^q\,dt\bigg)^{1/q}\,,	
\end{equation}
where $\mathcal W_q$ denotes the usual $q$-Wasserstein metric in $\mathcal P_q(\mathbb R^p)$.
Note that $(\mathcal M_q, \mathcal W_q^T)$ is a complete metric space. 
For $\mu, \mu' \in  \mathcal V_q^W$ let
\[
\rho_q (\mu, \mu') = \left(\mathbb E^W\left[|\mathcal W_q^T(\mu,\mu')|^q\right]\right)^{1/q}\,. 
\]

The next result is reminiscent of the study of  ``dissipation of free energy'' along $\mathcal W_2$-gradient flow as in~\cite[Section 3]{OV00} and~\cite[Chapter 15]{villani2008optimal}, but in the setting of stochastic control with gradient flow \eqref{eq fpe} in the metric space $\big(\mathcal V_2^W, \rho_2 \big)$. 

\begin{theorem}
\label{th with measure flow}
Fix $\sigma \geq 0$ and let Assumption~\ref{as coefficients new} 
hold.
Let $b$ be a permissible flow (c.~f.~Definition~\ref{def vect field flow def})
such that
$a\mapsto |\nabla_a b_{s,t}(a)|$ is bounded uniformly in $s,t$ and $\omega^W \in \Omega^W$.
Let $\nu_{s,t}$ be the solution to~\eqref{eq fpe}.
Assume that $X_{s,\cdot}, Y_{s,\cdot}, Z_{s,\cdot}$ are the forward and backward processes arising from control $\nu_{s,\cdot} \in \mathcal V_2^W$ and $\xi\in \mathbb R^d$ given by~\eqref{eq process} and~\eqref{eq adjoint proc}.
Then
\begin{equation}
\label{eq der of J wrt flow}
\begin{split}
& \tfrac{d}{ds} J^\sigma(\nu_{s,\cdot})  = \\
& - \mathbb E^W \!\!\!\int_0^T \!\!\!\int \left[  \left(\nabla_a\tfrac{\delta \mathbf H^0}{\delta m}\right)(\nu_{s,t},\cdot) + \tfrac{\sigma^2}{2}\nabla_a U_t + \tfrac{\sigma^2}{2}\nabla_a \log \nu_{s,t}  \right] \cdot \bigg(b_{s,t}+\tfrac{\sigma^2}{2} \nabla_a \log \nu_{s,t}\bigg)\, \nu_{s,t}\,(da)   \,dt\,.			
\end{split}
\end{equation}
\end{theorem}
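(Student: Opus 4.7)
The plan is to split $J^\sigma(\nu) = J^0(\nu) + \tfrac{\sigma^2}{2}\mathbb{E}^W\!\int_0^T \text{Ent}(\nu_t)\,dt$ and differentiate each piece along the curve $s\mapsto \nu_{s,\cdot}$ separately, then substitute the Fokker--Planck form of $\partial_s\nu_{s,t}$ and integrate by parts in $a$. The target expression makes this structure explicit: the bracket multiplying $\nu_{s,t}$ is exactly $\nabla_a$ of the symbolic object $\frac{\delta \mathbf H_t^\sigma}{\delta m}$ defined in \eqref{eq fat h sigma}, while the flow velocity $b_{s,t} + \tfrac{\sigma^2}{2}\nabla_a \log \nu_{s,t}$ is the potential appearing in the divergence form of \eqref{eq fpe}.

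For $J^0$ I would invoke the Pontryagin-type first variation established in Section~\ref{sec proof of opt cond}, which for an admissible perturbation $\eta$ of the control yields
\[
\lim_{\varepsilon\to 0}\frac{J^0(\nu+\varepsilon\eta)-J^0(\nu)}{\varepsilon} = \mathbb{E}^W\!\int_0^T\!\!\int \frac{\delta \mathbf H_t^0}{\delta m}(a,\nu)\,\eta_t(da)\,dt,
\]
with $\frac{\delta \mathbf H_t^0}{\delta m}$ evaluated along the forward--backward pair $(X(\nu),Y(\nu),Z(\nu))$ from \eqref{eq process}--\eqref{eq adjoint proc}. Lipschitz continuity of the map $\nu\mapsto(X(\nu),Y(\nu),Z(\nu))$, which follows from Assumption~\ref{as coefficients new} and Lemma~\ref{lemma existence for fixed control}, lets me replace the Gateaux derivative with the actual $s$-derivative along the smooth curve produced by Lemma~\ref{lem pde}, giving $\frac{d}{ds}J^0(\nu_{s,\cdot}) = \mathbb{E}^W\!\int_0^T\!\!\int \frac{\delta \mathbf H_t^0}{\delta m}(a,\nu_{s,t})\,\partial_s\nu_{s,t}(a)\,da\,dt$. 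The entropic part is handled by Lemma~\ref{lemma derivative of entropy along flow} (the very result motivating the notation \eqref{eq fat h sigma}), which gives, for each $(t,\omega^W)$,
\[
\frac{d}{ds}\text{Ent}(\nu_{s,t}) = \int \bigl(U(a)+\log\nu_{s,t}(a)+1\bigr)\,\partial_s\nu_{s,t}(a)\,da,
\]
finite because Lemma~\ref{lem pde} provides positivity and $C^{1,\infty}$-regularity of $\nu_{s,t}$.

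Summing the two pieces produces the symbolic identity $\frac{d}{ds}J^\sigma(\nu_{s,\cdot}) = \mathbb{E}^W\!\int_0^T\!\!\int \frac{\delta \mathbf H_t^\sigma}{\delta m}(a,\nu_{s,t})\,\partial_s\nu_{s,t}(a)\,da\,dt$. Substituting $\partial_s\nu_{s,t} = \nabla_a\cdot\bigl(\nu_{s,t}[b_{s,t} + \tfrac{\sigma^2}{2}\nabla_a\log\nu_{s,t}]\bigr)$ from \eqref{eq fpe} and integrating by parts in $a$ moves the gradient onto the bracket, creates the overall minus sign, and yields precisely \eqref{eq der of J wrt flow}. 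The boundary terms vanish because Lemma~\ref{lem pde} gives Gaussian-type decay of $\nu_{s,t}$ at infinity, the growth assumption on $b$ and the uniform bound on $\nabla_a b$ control the drift term, and Assumption~\ref{as coefficients new}(ii) together with the moment bounds on $X_t(\nu_{s,\cdot}),Y_t(\nu_{s,\cdot}),Z_t(\nu_{s,\cdot})$ control $\nabla_a \frac{\delta \mathbf H_t^0}{\delta m}(a,\nu_{s,t})$ at most polynomially in $a$.

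The main technical obstacle is the chain-rule step identifying $\frac{d}{ds}J^0(\nu_{s,\cdot})$ with the Gateaux derivative of $J^0$ in the direction of the signed measure $\partial_s\nu_{s,t}$: the Pontryagin identity of Section~\ref{sec proof of opt cond} is naturally proved for convex perturbations $\nu+\varepsilon(\mu-\nu)$, so one has to check that the flow produces admissible directions with sufficient $\mathcal{W}_1$-integrability and that the joint dependence of $(X,Y,Z)$ on $\nu$ transfers to genuine differentiability, rather than mere directional one-sidedness, along the curve. A secondary difficulty is making the integration by parts entirely rigorous, since it rests on tail decay of $\nu_{s,t}$ uniform in compact $s$-intervals; the uniform bound on $\nabla_a b$ in the hypothesis is exactly what is needed to upgrade the qualitative regularity of Lemma~\ref{lem pde} to the quantitative decay required here.
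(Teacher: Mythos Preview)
Your approach is correct and matches the paper's: split $J^\sigma$ into $J^0$ plus the entropy term, invoke Lemma~\ref{lemma der of J as hamiltonian flat} for the first variation of $J^0$ and Lemma~\ref{lemma derivative of entropy along flow} for the entropy part, substitute the Fokker--Planck equation~\eqref{eq fpe}, and integrate by parts in $a$. Two small discrepancies worth noting: Lemma~\ref{lemma derivative of entropy along flow} already delivers the entropy derivative in the final integrated-by-parts form (via It\^o's formula applied to $\log\nu_{s,t}(\theta_{s,t})+U(\theta_{s,t})$ along the SDE representation), not the intermediate expression $\int(U+\log\nu+1)\,\partial_s\nu\,da$ that you wrote; and for the chain-rule step on $J^0$ the paper does not argue via Lipschitz continuity of $\nu\mapsto(X,Y,Z)$ but instead writes $J^0(\nu_{s+h,\cdot})-J^0(\nu_{s,\cdot})$ as $\int_0^1\!\cdots\,d\varepsilon$ using the fundamental theorem of calculus together with Lemma~\ref{lemma der of J as hamiltonian flat}, and then passes to the limit $h\to 0$.
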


The proof of Theorem~\ref{th with measure flow} will come in Section~\ref{sec proof of opt cond}. 
But we first note that if one would like to choose a flow of measures such that $s\mapsto J(\nu_{s,\cdot})$ is (strictly) decreasing then the right hand side of~\eqref{eq der of J wrt flow} should be (strictly) negative.
Utilizing Theorem~\ref{th with measure flow}, we see that this can be achieved by taking $b_{s,t} = (\nabla_a \tfrac{\delta \mathbf H_t^{0}}{\delta m})(\nu_s,\cdot) + \tfrac{\sigma^2}{2}(\nabla_a U_t)$.
Note that if $a\mapsto \nabla_a U_t(a)$ is Lipschitz continuous $\mathbb P^W \otimes \Lambda([0,T])$-a.e and if Assumption~\ref{as coefficients new} holds then this satisfies the conditions of Definition~\ref{def vect field flow def}.
With this choice~\eqref{eq der of J wrt flow} becomes
\[
\tfrac{d}{ds} J(\nu_{s,\cdot}) = - \mathbb E^W \int_0^T  \bigg[\int  \big|(\nabla_a \tfrac{\delta \mathbf H_t^{\sigma}}{\delta m})(\nu_{s,t},\cdot) \big|^2 \,\nu_{s,t}(da)   \bigg]\,dt \leq 0		
\]
with equality only if $\nu_{s,\cdot}$ satisfies~\eqref{eq foc intro}.
From Theorem~\ref{th with measure flow} one expects convergence of the gradient flow to a control satisfying Pontryagin optimality (typically a local minimiser of $J$ but not necessarily).

In case the  system~\eqref{eq mfsgd}-\eqref{eq mfsgd 2} has unique solution (cf. Lemma~\ref{thm:WellposednessMeanField}) we
let $P_{s,t} \mu^0 := \mathcal L\big( \theta_{s,t} | \mathcal F_t^W \big)$,
where $(\theta_{s,t})_{s\geq 0, t\in [0,T]}$ is the unique solution to the  system~\eqref{eq mfsgd}-\eqref{eq mfsgd 2} started with the initial condition $(\theta^0_t)_{t\in [0,T]}$ such that $\mathcal L(\theta^0_t(\omega^W)) = \mathbb P^B \circ \theta^0_t(\omega^W)^{-1} = \mu^0(\omega^W)$ for a.e. $\omega^W \in \Omega^W$. 
Moreover we define a semigroup $P_s \mu^0 := (P_{s,t} \mu^0)_{t\in [0,T]}$ and note that $P_s \mu^0 \in \mathcal V_{2}^W$ for any $s\geq 0$ and
 we have $P_{s+s'} \mu^0 = P_s\big(P_{s'}\mu^0\big)$ for any $s'\geq 0$.

\begin{theorem}
\label{thm unique minimiser}
Let Assumptions \ref{as coefficients new} hold. 
Let
\begin{equation}
\label{eq foc}
\begin{split}
\mathcal I^\sigma := \bigg\{ \nu \in \mathcal V_q^W : \,\, & a\mapsto \tfrac{\delta \mathbf H_t^{\sigma}}{\delta m}(\nu,a)\,\, \text{is constant for a.e. $a\in \mathbb R^p$, a.e. $(t,\omega^W) \in (0,T)\times \Omega^W$}\bigg\}\,.	
\end{split}
\end{equation}
Then,
\begin{enumerate}[i)]
\item any solution of~\eqref{eq mfsgd} which satisfies $P_s \mu^\ast = \mu^\ast$ (i.e. an invariant measure) lies in $\mathcal I^\sigma$ and moreover
\item 
if there is a unique invariant measure $\mu^\ast \in \mathcal V^W_{2}$ and if for any $\mu^0 \in \mathcal V^W_2$ the system~\eqref{eq mfsgd}-\eqref{eq mfsgd 2} has solution given by $P_s \mu^0$ such that $\lim_{s\to\infty} \rho_q(P_s \mu^0, \mu^\ast) = 0$ then $\mathcal I^\sigma   = \{\mu^\ast\}$, i.e. $\mu^\ast$ is the only control which satisfies the first order condition~\eqref{eq foc}, and for any $\mu^0 \in \mathcal V^W_2$ we have $J^\sigma(\mu^\ast) \leq J^\sigma(\mu^0)$. 
\end{enumerate}
\end{theorem}
Theorem~\ref{thm unique minimiser} will be proved at the end of Section~\ref{sec proof of opt cond}. 
To prove Theorem~\ref{thm unique minimiser} we will need the following necessary condition for optimality, known as the Pontryagin optimality principle. 

\begin{theorem}[Necessary condition for optimality]
\label{thm necessary cond linear}
Fix $\sigma > 0$. 
Fix $q>2$.
Let the Assumptions~\ref{as coefficients new} hold. 
If $\nu\in \mathcal V^W_{2}$ is (locally) optimal for $J^\sigma$ given by~\eqref{eq objective bar J}, $X(\nu)$ and $Y(\nu)$, $Z(\nu)$ are
the associated optimally controlled state and adjoint processes given by~\eqref{eq process} and~\eqref{eq adjoint proc} respectively,
then for any other $\mu \in \mathcal V^W_{2}$ it holds that
\[
\int  \left[\tfrac{\delta H^0}{\delta m}(X_t, Y_t, Z_t,\nu_t,a) + \tfrac{\sigma^2}2(\log \nu_t(a)  - \log \gamma_t(a))\right] \,(\mu_t-\nu_t)(da)  
\geq 0 \,\,\, \text{for a.e. $(\omega,t)\in \Omega^W \times (0,T)$}\,.
\]
\end{theorem}

The proof of Theorem~\ref{thm necessary cond linear} is given in Section~\ref{sec proof of opt cond}. 

\subsection{Existence and uniqueness for~\eqref{eq mfsgd}-\eqref{eq mfsgd 2} and its convergence to invariant measure}

\begin{assumption}
\label{ass dissipativity of U}
Let $\nabla_aU$ be Lipschitz continuous in $a$ with the constant uniform in $(t,\omega)$, let $\nabla_a U_t(0) = 0$ for a.e. $(t,\omega)$ and moreover let there be $\kappa_u>0$ such that for a.e. $(t,\omega)$:
\[
\left(\nabla_aU_t(a')-\nabla_aU_t(a)\right)\cdot\left(a'-a\right)\geq \kappa_u|a'-a|^2,\,a,a'\in \mathbb R^p\,.
\]
Additionally assume that there exists $\alpha$ which is an $\mathbb R^p$-valued $(\mathcal F_t^W)_{t\in[0,T]}$-progressively measurable process such that $\mathbb E\int_0^T |\alpha_t|^2\,dt \leq K$ such that for a.e. $(t,\omega)$ we have
\[
\nabla_a U_t(a)\cdot a \geq \kappa_u |a|^2 - \alpha_t\,.
\]
In applications, Assumption~\ref{ass dissipativity of U} has natural interpretation. 
Imagine that one has already solved a related control problem, potentially one where a closed form solution exists, with strict open loop controls where we denote the optimal solution by $\alpha$. 
It is then natural to take $\gamma_t(a) \sim e^{-U_t} \sim e^{-\frac{\kappa_u}2|a-\alpha_t|^2}$.

\end{assumption}

We will assume that the running reward can be decomposed into a Lipschitz part and a convex part: $F=F^L + F^C$.
As an example consider
$
F^C_t(x,m) = x + \int \frac12|a|^2\,m(da)\,.
$
Then $\tfrac{\delta F^C}{\delta m}(x,m,a) = x + \frac{1}{2}|a|^2$, so that $\nabla_a \tfrac{\delta F^C}{\delta m}(x,m,a) = a$. 
This will satisfy Assumption~\ref{ass dissipativity of F} below with $\kappa_f = 1$. 

\begin{assumption}
\label{ass dissipativity of F}
Let $\nabla_a \tfrac{\delta F^C}{\delta m}$ exist on $[0,T]\times \mathbb R^d \times \mathcal P(\mathbb R^p) \times \mathbb R^p$  and there be $\kappa_f>0$ such that for any $x \in \mathbb R^d$, $m\in \mathcal P_2(\mathbb R^p)$ we have
\[
2\left(\nabla_a \tfrac{\delta F^C}{\delta m}(x,m,a') - \nabla_a \tfrac{\delta F^C}{\delta m}(x,m,a)\right)\cdot\left(a'-a\right)\geq \kappa_f|a'-a|^2,\,a,a'\in \mathbb R^p\,.
\]
\end{assumption}

\begin{assumption}
\label{ass for bsde estimates}
There is $K>0$ such that for all $t\in[0,T$], for all $x,x' \in \mathbb R^d$, for all $m,m' \in \mathcal P_2(\mathbb R^p)$ and for all $a\in \mathbb R^p$ we have:
\begin{enumerate}[i)]
\item 
$
|\Phi_t(x,m) - \Phi_t(x',m')| + |\Gamma_t(x,m) - \Gamma_t(x',m')| \leq K \Big( |x-x'| + \mathcal W_2(m,m')\Big)\,.
$
\item 
$
|\nabla_x \Phi_t(x,m)| + \sum_{i=1}^d\sum_{j=1}^{d'}|\nabla_x \Gamma_t^{ij}(x,m)| + |\nabla_x g(x)| + |\nabla_x F_t(x,m)|\leq K\,.
$
\item 
$
|\nabla_x \Phi_t(x,m) - \nabla_x \Phi_t(x',m')| \leq K |x-x'| + \mathcal W_2(m,m')$ and $|\nabla_x \Gamma_t(x,m) - \nabla_x \Gamma_t(x',m')| \leq K |x-x'|\,.
$
\item 
$
|\nabla_x F_t(x,m) - \nabla_x F_t(x',m')| \leq K |x-x'| + \mathcal W_2(m,m')
\,\,\,\text{and}\,\,\,
|\nabla_x g(x) - \nabla_x g(x')| \leq K|x-x'|\,.
$
\end{enumerate}	
	
\end{assumption}

\begin{assumption}
\label{ass for gradient system}
There is $K>0$ such that for all $t\in[0,T$], for all $x,x' \in \mathbb R^d$, for all $m,m' \in \mathcal P_2(\mathbb R^p)$ and for all $a,a'\in \mathbb R^p$ we have:
\begin{enumerate}[i)]
\item 
$
\left|(\nabla_a \tfrac{\delta \Phi_t}{\delta m})(x,m,a)\right| + \left|(\nabla_a \tfrac{\delta \Gamma_t}{\delta m})(x,m,a)\right| \leq K
$ and $\nabla_a^2 \frac{\delta \Gamma}{\delta m} = 0$.
\item With $G$ standing in for either $\Phi$ or $F^L$: 
\[
\left|(\nabla_a \tfrac{\delta G_t}{\delta m})(x,m,a) - (\nabla_a \tfrac{\delta G_t}{\delta m})(x',m',a')\right| \leq K(|x-x'|+\mathcal W_2(m,m') + |a-a'|)\,.
\]
\end{enumerate}	
\end{assumption}

\begin{lemma}[Existence and uniqueness]\label{thm:WellposednessMeanField}
\label{lemma existence and uniqueness}
Let Assumptions~\ref{ass dissipativity of U}, \ref{ass dissipativity of F}, \ref{ass for bsde estimates} and~\ref{ass for gradient system} hold.
Let $\hat c>0 $ be the constant arising in Lemma~\ref{lem sys diss} and let $L>0$ be the constant arising in Lemma~\ref{lem sys mono}.
If $\sigma^2\kappa_U + \kappa_f - L \geq 0$
then there is a unique solution to~\eqref{eq mfsgd}-\eqref{eq mfsgd 2} for all $s\geq 0$.
Moreover if $\lambda := \sigma^2\kappa_U + \kappa_f - \hat c > 0$ then for any $s\geq 0$ we have  
\begin{equation}\label{eq:UnifBound}
\begin{aligned}
&\int_{0}^{T}\mathbb E[|\theta_{s,t}|^2]\,dt
	\leq e^{-\lambda s} \int_0^T \mathbb E |\theta_{0,t}|^2 \,dt + \frac1\lambda(p\sigma^2 T + \hat c(1+|\xi|^2)\,.
\end{aligned}
\end{equation}
\end{lemma}

\begin{theorem} \label{thm conv to inv meas rate}
Let Assumptions~\ref{ass dissipativity of U}, \ref{ass dissipativity of F}, \ref{ass for bsde estimates} and~\ref{ass for gradient system} hold.
Moreover, assume that  $\lambda := \sigma^2\kappa_U + \kappa_f - 2L > 0$.
Then there is $\mu^\ast \in \mathcal V_q^W$ such that for any $s\geq 0$ we have $P_s \mu^\ast = \mu^\ast$ and $\mu^\ast$ is unique.
For any 
$\mu^0 \in \mathcal V_2^W$ we have that 
\begin{equation}
\label{eq exp conv to inv meas}	
\rho_2(P_s \mu^0, \mu^\ast) \leq e^{-\frac12 \lambda s}\rho_2(\mu^0,\mu^\ast)\,.
\end{equation}

\end{theorem}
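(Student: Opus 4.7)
The strategy is to establish that $P_s$ is a strict contraction on the complete metric space $(\mathcal V_q^W, \rho_q)$ with rate $e^{-\lambda s/q}$. Once this is in hand, Banach's fixed point theorem applied to any $P_{s_0}$ with $s_0>0$ yields a unique $\mu^\ast \in \mathcal V_q^W$ with $P_{s_0}\mu^\ast = \mu^\ast$; the semigroup identity $P_{s}P_{s_0} = P_{s_0}P_{s}$ (inherited from the uniqueness in Lemma~\ref{thm:WellposednessMeanField}) promotes $\mu^\ast$ to a fixed point of every $P_s$, and \eqref{eq exp conv to inv meas} is nothing but the contraction inequality applied with $\tilde\mu^0 = \mu^\ast$.

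For the contraction step, fix $\mu^0, \tilde\mu^0 \in \mathcal V_q^W$ and work with a synchronous coupling. On an augmented probability space carrying the same $W$ and the same $B$, I select initial data $\theta_{0,t}$ and $\tilde\theta_{0,t}$ whose $\mathcal F_t^W$-conditional joint law realises an optimal $\mathcal W_q$-coupling of $\mu_t^0$ and $\tilde\mu_t^0$ for each $t$ (measurability in $(t,\omega^W)$ is standard via measurable selection). Running both copies of \eqref{eq mfsgd}--\eqref{eq mfsgd 2} on this space produces $(\theta_{s,t})$ and $(\tilde\theta_{s,t})$ with $\nu_{s,t} = P_{s,t}\mu^0$, $\tilde\nu_{s,t} = P_{s,t}\tilde\mu^0$. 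Writing $\Delta_{s,t} := \theta_{s,t}-\tilde\theta_{s,t}$, the Brownian increments cancel, so $s\mapsto\Delta_{s,t}$ has finite variation; the chain rule gives $d|\Delta_{s,t}|^q = q|\Delta_{s,t}|^{q-2}\Delta_{s,t}\cdot d\Delta_{s,t}$. Combining Assumption~\ref{ass monotonicity of DmH0} with the strong convexity of $U$ from Assumption~\ref{ass exist and uniq} yields
\[
-\Delta_{s,t}\cdot \tfrac{d\Delta_{s,t}}{ds} \;\geq\; \tfrac{1}{2}(\eta_1+\sigma^2\kappa)|\Delta_{s,t}|^2 \;-\; \tfrac{\eta_2}{2}\,\mathcal E_t(\nu_s,\tilde\nu_s)^2,
\]
and hence
\[
d|\Delta_{s,t}|^q \leq -\tfrac{q}{2}(\eta_1+\sigma^2\kappa)|\Delta_{s,t}|^q\,ds + \tfrac{q\eta_2}{2}|\Delta_{s,t}|^{q-2}\mathcal E_t(\nu_s,\tilde\nu_s)^2\,ds.
\]

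Next, Young's inequality with exponents $q/(q-2)$ and $q/2$ gives $|\Delta|^{q-2}\mathcal E_t^2 \leq \tfrac{q-2}{q}|\Delta|^q + \tfrac{2}{q}\mathcal E_t^q$. Taking expectations, integrating over $t\in[0,T]$, and invoking both the defining bound $\mathbb E^W\!\int_0^T\!\mathcal E_t(\nu_s,\tilde\nu_s)^q\,dt \leq \rho_q(\nu_s,\tilde\nu_s)^q$ from Assumption~\ref{ass monotonicity of DmH0} and $\rho_q(\nu_s,\tilde\nu_s)^q \leq \int_0^T\mathbb E|\Delta_{s,t}|^q\,dt$ (since $(\theta_{s,t},\tilde\theta_{s,t})$ is a conditional coupling of $(\nu_{s,t},\tilde\nu_{s,t})$), the coefficients collapse to $-\tfrac{q}{2}(\sigma^2\kappa+\eta_1) + \tfrac{(q-2)\eta_2}{2} + \eta_2 = -\lambda$. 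Grönwall, together with the optimality of the initial coupling, then delivers
\[
\rho_q(P_s\mu^0, P_s\tilde\mu^0)^q \;\leq\; \int_0^T\mathbb E|\Delta_{s,t}|^q\,dt \;\leq\; e^{-\lambda s}\rho_q(\mu^0,\tilde\mu^0)^q,
\]
which is the required contraction.

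The main delicacy is the Young-inequality bookkeeping: the exponents are chosen precisely so that the residual $|\Delta|^q$ term extracted from the mixed $|\Delta|^{q-2}\mathcal E_t^2$, plus the $\eta_2\mathcal E_t^q$ piece reabsorbed via $\mathbb E^W\!\int\!\mathcal E_t^q\,dt \leq \rho_q^q \leq \int\mathbb E|\Delta|^q\,dt$, reassemble into exactly $\tfrac{q}{2}\eta_2\int\mathbb E|\Delta|^q\,dt$; any cruder split would produce a strictly weaker rate than $\lambda$. Minor technicalities — the measurable selection of the synchronous coupling across $(t,\omega^W)$, and, when $q>2$, a standard $\varepsilon$-smoothing of $a\mapsto|a|^q$ near the origin to justify the chain rule — are routine.
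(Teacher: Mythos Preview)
Your contraction argument is essentially the paper's Lemma~\ref{lemma exp conv est}: the same synchronous coupling with common $B$, the same monotonicity-plus-strong-convexity lower bound, the same Young split $|\Delta|^{q-2}\mathcal E_t^2 \le \tfrac{q-2}{q}|\Delta|^q + \tfrac{2}{q}\mathcal E_t^q$, and the same reabsorption via $\mathbb E^W\!\int_0^T\mathcal E_t^q\,dt \le \rho_q^q \le \int_0^T\mathbb E|\Delta|^q\,dt$, arriving at exactly $\lambda = \tfrac{q}{2}(\sigma^2\kappa+\eta_1-\eta_2)$. So that part coincides.

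Where you diverge from the paper is in promoting the Banach fixed point of a single $P_{s_0}$ to a fixed point of every $P_s$. The paper takes the fixed point $\tilde\mu$ of $P_{s_0}$ and then \emph{averages}, setting $\mu^\ast := \int_0^{s_0} P_s\tilde\mu\,ds$, and checks by a change of variables that $P_r\mu^\ast = \mu^\ast$ for all $r$. You instead observe that the semigroup property $P_{s+s_0}=P_sP_{s_0}=P_{s_0}P_s$ forces $P_s\tilde\mu$ to be another $P_{s_0}$-fixed point, hence equal to $\tilde\mu$ by the uniqueness already supplied by Banach. Your route is shorter and avoids having to make sense of the measure-valued integral; the paper's averaging construction is the more robust device (it survives in settings where one only knows existence, not uniqueness, of a $P_{s_0}$-fixed point), but that extra generality is not needed here. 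Both approaches are correct.
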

Theorem~\ref{thm conv to inv meas rate} will be proved in Section~\ref{sec ex uniq and conve}.
Let us now present an example.
\begin{example}

Consider the controlled SDE 
\[
dX_t(\nu) = b(X_t(\nu))\,dt + \bigg(\int \phi(a) \, \nu_t(da)\bigg)\,dt + \sigma(X_t(\nu))\,dW_t + \bigg(\int a \,\nu_t(da)\bigg)\,dW_t\,,
\]
where $b$ and $\sigma$ are differentiable, Lipschitz continuous functions with bounded and Lipschitz continuous derivatives. Moreover $\phi$ is differentiable with bounded derivatives. 

To define the objective let $\zeta(x) := \frac12|x|^2$ for $|x| \leq 1$ and $\zeta(x) = \frac12|x|$ for $|x|>1$. 
This function is differentiable with bounded derivative. 
Our objective functional is
\begin{equation*}
\begin{split}
J^\sigma(\nu) & := \mathbb E^{W}\left[\int_0^T \bigg[ \zeta(X_t(\nu)) +  \frac{\kappa_f}{2}\int |a|^2\,\nu_t(da) + \tfrac{\sigma^2}{2}R(\nu_t|\gamma_t)\bigg]\,dt + \zeta(X_T(\nu)) \Big| X_0(\nu) = \xi\right]\,.
\end{split}
\end{equation*}

We can see that in this setting Assumptions~\ref{as coefficients new}, \ref{ass dissipativity of F}, \ref{ass for bsde estimates} and~\ref{ass for gradient system} and all hold. We are free to choose any prior which will satisfy Assumption~\ref{ass dissipativity of U}.  
A possible prior would be of the form $e^{-\tfrac12|a-\alpha_t|^2}$ with $\alpha$ the solution of an associated linear-quadratic control problem.
\end{example}

\section{Pontryagin Optimality for Entropy-Regularized  Stochastic Control}
\label{sec proof of opt cond}
We start by giving the proof of Lemma~\ref{lem pde} as this will be independent of all the results concerning Pontryagin's optimality principle. 
\begin{proof}[Proof of Lemma~\ref{lem pde}]
The linear PDE~\eqref{eq fpe} has unique solution $\nu_{\,\cdot,t} \in C^{1,\infty}((0,\infty)\times \mathbb R^p; \mathbb R)$ for each $t\in [0,T]$ and $\omega^W \in \Omega^W$ due to e.g Ladyzenskaja, Solonnikov and Ural'ceva~\cite[Chapter IV]{LSU68}.
The $\mathcal F_t^W$-measurability of $\nu_{s,t}(a)$ is a question of measurability of an explicitly defined function and this is proved e.g. in~\cite[Lemma~3.2]{emmrich2017nonlinear}. 
Consider, for each $t\in [0,T]$, the stochastic process $(\theta_{s,t})_{s\geq 0}$, solving
\[
d\theta_{s,t} = - b_{s,t}(\theta_{s,t})\,ds + \sigma \,dB_s\,.
\]	
Let $\mu_{s,t}$ denote the law of $\theta_{s,t}$ given $\omega \in \Omega^W$
From Girsanov's theorem we see that the $\mu_{s,t}$ has, for each $s>0$ and $t\in [0,T]$, smooth density and moreover $\mu_{s,t}(a) > 0$.
Applying It\^o's formula to $\varphi \in C^2_b(\mathbb R^p)$, taking expectation (over $\Omega^B$) and using $\mu_{s,t}$ to denote the law of $\theta_{s,t}$ given $\omega \in \Omega^W$ we can check that $\mu_{s,t}$ satisfies 
\[
\int \varphi(a) \mu_{s,t}(da) = \int \varphi(a)\mu_{0,t}(da) + \int_0^s \Big[ \int - b_{s,t}(a)\nabla_a \varphi(a)\mu_{r,t}(da) + \int \frac12\sigma^2 \Delta_a \varphi(a) \mu_{r,t}(da)  \Big]\,ds\,.
\]
Integrating by parts we see that is a solution to~\eqref{eq fpe}. 
As the solutions are unique we conclude that $\mu_{s,\cdot} = \nu_{s,\cdot}$ and so $\nu_{s,t}(a) > 0$ for all $s>0$, $t\in [0,T]$ and $\Omega^W$-a.s.
\end{proof}

We know that relative entropy is only lower semi-continous on $\mathcal P_2(\mathbb R^p)$ and thus we wouldn't expect even the directional derivative to exists (in the sense that the limit of the difference quotient is a finite number) everywhere on $\mathcal P_2(\mathbb R^p)$.  
The following lemma gives two useful estimates on the difference quotient.  
\begin{lemma}[Difference quotient estimates for relative entropy]
\label{lemma diff of Ent flat}
Let $\nu_t,\mu_t \in \mathcal V_2^W$ and let $\nu^\varepsilon = \nu + \varepsilon(\mu-\nu)$. 
Then a.s. 
\begin{enumerate}[i)]
\item for any $\varepsilon \in (0,1)$ we have
\[
\frac1\varepsilon \int_0^T \left[R(\nu^\varepsilon_t|\gamma_t) - R(\nu_t|\gamma_t)\right]\,dt \geq \int_0^T \int [\log \nu_t(a)  - \log \gamma_t(a) ](\mu_t - \nu_t)(da)\,dt\,,
\]
\item 
\[
\limsup_{\varepsilon \to 0} \frac1\varepsilon \int_0^T \left[R(\nu^\varepsilon_t|\gamma_t) - R(\nu_t|\gamma_t)\right]\,dt 
\leq \int_0^T \int [\log \nu_t(a)  - \log \gamma_t(a) ](\mu_t - \nu_t)(da)\,dt\,.
\]
\end{enumerate}
\end{lemma}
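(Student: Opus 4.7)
My plan is to exploit strict convexity of the scalar function $\phi(x)=x\log x$ together with the linear decomposition
\[
\text{Ent}(m)=H(m)+\int U(a)\,m(da),\qquad H(m):=\int m(a)\log m(a)\,da.
\]
Since $m\mapsto \int U\,dm$ is affine in $m$, its contribution to the difference quotient is exactly $\int U(a)(\mu_t-\nu_t)(da) = -\int \log\gamma(a)(\mu_t-\nu_t)(da)$, which is precisely the $-\log\gamma$ piece on the right-hand side of both inequalities and is already an equality (independent of $\varepsilon$). Hence it suffices to prove both (i) and (ii) with $\text{Ent}$ replaced by $H$ and the right-hand side replaced by $\int_0^T\int \log\nu_t(a)(\mu_t-\nu_t)(da)\,dt$.

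For part (i) the key is the lower tangent-line inequality for the convex function $\phi$, namely $\phi(y)-\phi(x)\geq \phi'(x)(y-x)$. Applying it pointwise in $(\omega,t,a)$ with $x=\nu_t(a)$, $y=\nu_t^{\varepsilon}(a)=\nu_t(a)+\varepsilon(\mu_t(a)-\nu_t(a))$ gives
\[
\phi(\nu_t^\varepsilon)-\phi(\nu_t)\geq (1+\log \nu_t)\cdot \varepsilon(\mu_t-\nu_t).
\]
The right-hand side is measurable and signed; integrating over $a\in\mathbb R^p$ and $t\in[0,T]$ and using $\int(\mu_t-\nu_t)(da)=0$ to kill the constant contribution yields
\[
\int_0^T[H(\nu_t^\varepsilon)-H(\nu_t)]\,dt\geq \varepsilon \int_0^T\!\!\int \log\nu_t\,(\mu_t-\nu_t)\,da\,dt,
\]
and dividing by $\varepsilon>0$ gives (i). Adding back the linear $U$-part recovers the stated bound.

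For part (ii) I use the opposite tangent bound $\phi(y)-\phi(x)\leq \phi'(y)(y-x)$. Integrating analogously and using again that $\int(\mu_t-\nu_t)(da)=0$ gives
\[
\tfrac1{\varepsilon}\int_0^T[H(\nu_t^\varepsilon)-H(\nu_t)]\,dt \leq \int_0^T\!\!\int \log \nu_t^\varepsilon\,(\mu_t-\nu_t)\,da\,dt.
\]
It then remains to pass to the $\limsup$ inside the integral. Pointwise $\nu_t^\varepsilon\to\nu_t$, hence $\log\nu_t^\varepsilon\to\log\nu_t$. To justify the interchange, note that $\nu_t^\varepsilon/\nu_t = 1+\varepsilon(\mu_t/\nu_t-1)$ is a convex combination of $1$ and $\mu_t/\nu_t$, so $\log(\nu_t^\varepsilon/\nu_t)$ lies between $0$ and $\log(\mu_t/\nu_t)$ with the same sign as $(\mu_t-\nu_t)$. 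Consequently
\[
0\leq (\log\nu_t^\varepsilon-\log\nu_t)(\mu_t-\nu_t)\leq \log(\mu_t/\nu_t)(\mu_t-\nu_t),
\]
the upper bound being the (non-negative) density of the symmetrised relative entropy $D_{KL}(\mu_t\|\nu_t)+D_{KL}(\nu_t\|\mu_t)$. Dominated convergence (on the set where the right-hand side of the target inequality is finite) then drives the correction $\int_0^T\!\int(\log\nu_t^\varepsilon-\log\nu_t)(\mu_t-\nu_t)\,da\,dt$ to $0$, which is the required $\limsup$ bound.

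The step I expect to be the main obstacle is the dominated-convergence argument in (ii): the dominating density $\log(\mu_t/\nu_t)(\mu_t-\nu_t)$ is the Jeffreys-divergence density and is only a priori $L^1$ when both $D_{KL}(\mu_t\|\nu_t)$ and $D_{KL}(\nu_t\|\mu_t)$ are integrable in $t$. If this fails the right-hand side of (ii) equals $+\infty$ and the statement is vacuous, so the worst case is harmless in principle, but in the application one must verify that the competitor $\mu$ produced in the variational argument does not land in this degenerate regime. Otherwise, the proof is essentially an $\varepsilon$-expansion of the convex functional $H$ with careful book-keeping of the linear $U$-perturbation.
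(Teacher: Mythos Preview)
Your argument for part (i) is essentially the paper's: both use the convexity of $\phi(x)=x\log x$. You phrase it as the tangent-line inequality $\phi(y)-\phi(x)\ge\phi'(x)(y-x)$; the paper rewrites $\tfrac1\varepsilon(\text{Ent}(\nu^\varepsilon_t)-\text{Ent}(\nu_t))$ as the target term plus $\tfrac1\varepsilon\int\tfrac{\nu^\varepsilon_t}{\nu_t}\log\tfrac{\nu^\varepsilon_t}{\nu_t}\,\nu_t\,da$ and invokes $x\log x\ge x-1$. These are the same inequality.

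For part (ii) your route genuinely differs from the paper, and the difference matters. You use the upper tangent bound $\phi(y)-\phi(x)\le\phi'(y)(y-x)$ and then need dominated convergence with the Jeffreys density $(\log\mu_t-\log\nu_t)(\mu_t-\nu_t)$ as majorant. Your escape clause (``if the dominator is not integrable then the right-hand side of (ii) is $+\infty$ and the bound is vacuous'') is not correct: one can have $D_{KL}(\nu_t\|\mu_t)=+\infty$ (for instance when $\mu_t$ vanishes on part of the support of $\nu_t$) while $\int\log\nu_t\,(\mu_t-\nu_t)\,da$ remains finite, so the right-hand side of (ii) is finite yet your dominator is not integrable. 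In that regime your DCT step fails and the proof has a gap.

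The paper avoids this by using the \emph{chord} inequality instead of the tangent: convexity of $\phi$ gives directly
\[
\tfrac1\varepsilon\big[\nu^\varepsilon_t(a)\log\nu^\varepsilon_t(a)-\nu_t(a)\log\nu_t(a)\big]\le \mu_t(a)\log\mu_t(a)-\nu_t(a)\log\nu_t(a),
\]
hence $\tfrac1\varepsilon[\text{Ent}(\nu^\varepsilon_t)-\text{Ent}(\nu_t)]\le \text{Ent}(\mu_t)-\text{Ent}(\nu_t)$, an $\varepsilon$-free upper bound. Reverse Fatou then pushes the $\limsup$ inside the $a$-integral, and the pointwise limit is computed from $\phi'(x)=1+\log x$. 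This requires only that $\text{Ent}(\mu_t)$ and $\text{Ent}(\nu_t)$ be finite, which is strictly weaker than finiteness of the symmetrised KL divergence. So your overall strategy is sound, but for (ii) you should replace the tangent-at-$\nu^\varepsilon$ bound and DCT by the chord bound and reverse Fatou.
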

\begin{proof}
This proof is similar to the one given in~\cite[Proposition 2.4]{hu2019Mean} but extended to the setting of this paper.	
For i) we begin by observing that
\[
\begin{split}
& \frac1\varepsilon \left(R(\nu^\varepsilon_t|\gamma_t) - R(\nu_t|\gamma_t)\right) = \frac1\varepsilon\int \bigg[\log \frac{\nu^\varepsilon_t(a)}{\gamma_t(a)} \nu^\varepsilon_t(a)  - \log \frac{\nu_t(a)}{\gamma_t(a)}  \nu_t(a) \bigg]\,da\\
& =  \frac1\varepsilon \int\big(\nu^\varepsilon_t(a) - \nu_t(a)\big) \log \frac{\nu_t(a)}{\gamma_t(a)} \,da + \frac1\varepsilon  \int\nu^\varepsilon_t(a) \bigg[\log \frac{\nu^\varepsilon_t(a)}{\gamma_t(a)} - \log \frac{\nu_t(a)}{\gamma_t(a)} \bigg]\,da\\
& =  \int\big(\mu_t(a) - \nu_t(a)\big) \log \frac{\nu_t(a)}{\gamma_t(a)} \,da + \frac1\varepsilon  \int\nu^\varepsilon_t(a) \log \frac{\nu^\varepsilon_t(a)}{\nu_t(a)}  \,da\\
& =  \int [\log \nu_t(a) - \log\gamma_t(a)](\mu_t - \nu_t)(da) + \frac1\varepsilon  \int\frac{\nu^\varepsilon_t(a)}{\nu_t(a)} \log \frac{\nu^\varepsilon_t(a)}{\nu_t(a)} \nu_t(a) \,da\,.
\end{split}
\]
Since $x\log x \geq x - 1$ for $x\in (0,\infty)$ we get
\[
\begin{split}
& \frac1\varepsilon  \int\frac{\nu^\varepsilon_t(a)}{\nu_t(a)} \log \frac{\nu^\varepsilon_t(a)}{\nu_t(a)} \nu_t(a) \,da 
\geq \frac1\varepsilon  \int \bigg[\frac{\nu^\varepsilon_t(a)}{\nu_t(a)}  -1 \bigg] \nu_t(a) \,da = \frac1\varepsilon  \int \big[\nu^\varepsilon_t(a)  - \nu_t(a)\big] \,da = 0\,.
\end{split}
\]
Hence
\[
\frac1\varepsilon \left(R(\nu^\varepsilon_t|\gamma_t) - R(\nu_t|\gamma_t)\right) 
\geq \int [\log \nu_t(a) - \log\gamma_t(a)](\mu_t - \nu_t)(da)\,,
\]
which proves i).

To prove ii) start by noting that one may write
\[
\begin{split}
& \frac1\varepsilon \left(R(\nu^\varepsilon_t|\gamma_t) - R(\nu_t|\gamma_t)\right) = \frac1\varepsilon\int \big[\big(\log \nu^\varepsilon_t(a) - \log \gamma_t(a)\big) \nu^\varepsilon_t(a)  - \big(\log \nu_t(a) - \log \gamma_t(a) \big) \nu_t(a) \big]\,da\\
& = \int \frac1\varepsilon\big[\nu^\varepsilon_t(a) \log \nu^\varepsilon_t(a) - \nu_t(a)\log \nu_t(a) - \log \gamma_t(a)\big( \nu^\varepsilon_t(a) - \nu_t(a)\big)    \big]\,da\\
\end{split}
\]
Now 
\[
- \frac1\varepsilon \log \gamma_t(a) (\nu^\varepsilon_t(a) - \nu_t(a)) = -\log \gamma_t(a) (\mu_t(a) - \nu_t(a))\,.
\]
Moreover, since the map $x\mapsto x\log x$ is convex for $x > 0$ and by definition of $\nu^\varepsilon$, we have
\[
\frac1\varepsilon\left[
\nu^\varepsilon_t(a)\log(\nu^\varepsilon_t(a)) - \nu_t(a)\log(\nu_t(a))
\right] \leq \mu(a)\log \mu(a) - \nu(a)\log \nu(a)\,.
\]
Hence 
\[
\frac1\varepsilon \left(R(\nu^\varepsilon_t|\gamma_t) - R(\nu_t|\gamma_t)\right) \leq R(\mu_t|\gamma_t) - R(\nu_t|\gamma_t)\,.
\]
Since $\mu, \nu \in \mathcal V_2^W$ the right hand side is finite. 
Finally, by the reverse Fatou's lemma,
\[
\begin{split}
& \limsup_{\varepsilon \to 0} \frac1\varepsilon \left[R(\nu^\varepsilon_t|\gamma_t) - R(\nu_t|\gamma_t)\right] \\
& \leq \int \limsup_{\varepsilon \to 0} \frac1\varepsilon\big[\nu^\varepsilon_t(a) \log \nu^\varepsilon_t(a) - \nu_t(a)\log \nu_t(a) - \log \gamma_t(a)\big( \nu^\varepsilon_t(a) - \nu_t(a)\big)    \big]\,da\,.\\
\end{split}
\]
Calculating the derivative of $x\mapsto x \log x$ for $x>0$ leads to 
\[
\begin{split}
& \limsup_{\varepsilon \to 0} \frac1\varepsilon\big[\nu^\varepsilon_t(a) \log \nu^\varepsilon_t(a) - \nu_t(a)\log \nu_t(a) - \log \gamma_t(a)\big( \nu^\varepsilon_t(a) - \nu_t(a)\big)    \big] \\
& = (1 + \log \nu_t(a))(\mu_t(a)-\nu_t(a)) - \log \gamma_t(a) (\mu_t(a) -\nu_t(a))\,.
\end{split}
\]
Hence
\[
\limsup_{\varepsilon \to 0} \frac1\varepsilon \left[R(\nu^\varepsilon_t|\gamma_t) - R(\nu_t|\gamma_t)\right] 
\leq \int [\log \nu_t(a)  - \log \gamma_t(a) ](\mu_t - \nu_t)(da)\,.
\]
This completes the proof.
\end{proof}

The next Lemma is proved in Lemmas 6.1, 6.2 and 6.3 in~\cite{hu2019meanode}.
\begin{lemma}[Properties of the Gradient Flow]
\label{lemma prop grad flow}
Let $b$ be a permissible flow such that $a\mapsto |\nabla_a b_{s,t}(a)|$ is bounded uniformly in $s>0$, $t\in [0,T]$, $\omega^W \in \Omega^W$. Then
\begin{enumerate}[i)]
\item For all $s>0$, $t\in [0,T]$, $\omega^W \in \Omega^W$ and $a\in \mathbb R^p$ we have $\nu_{s,t}(a) > 0$	and $R(\nu_{s,t}|\gamma_t) < \infty$.
\item For all $s>0$, $t\in [0,T]$ and $\omega^W \in \Omega^W$ we have $\int |\nabla_a \log \nu_{s,t}(a)|^2 \nu_{s,t}(a)(da) < \infty$.
\item For all $s>0$, $t\in [0,T]$ and $\omega^W \in \Omega^W$ we have
\[
\int |\nabla_a \nu_{s,t}(a)|\,da + \int |a\cdot \nabla_a \nu_{s,t}(a)|\,da + \int |\Delta_a \nu_{s,t}(a)|\,da < \infty\,.
\]
\end{enumerate}
\end{lemma}

The following lemma proves that along the gradient flow~\eqref{eq fpe} the function $s\mapsto R(\nu_{s,t}|\gamma_t)$ is in fact differentiable and we have an explicit expression for the derivative. 
\begin{lemma}[Derivative of entropy along a gradient flow]
\label{lemma derivative of entropy along flow}
Fix $\sigma \geq 0$ and let Assumption~\ref{ass dissipativity of U}
hold.
Let $b$ be a permissible flow (c.~f.~Definition~\ref{def vect field flow def})
such that
$a\mapsto |\nabla_a b_{s,t}(a)|$ is bounded uniformly in $s,t$ and $\omega^W \in \Omega^W$.
Let $\nu_{s,t}$ be the solution to~\eqref{eq fpe}.
Then
\[
\begin{split}
d R(\nu_{s,t}|\gamma_t) 
& =  -\int\bigg( \nabla_a \log \nu_{s,t} + \nabla_a U \bigg)\cdot \left(  b_{s,t}+\tfrac{\sigma^2}{2} \nabla_a \log \nu_{s,t} \right)\,\nu_{s,t}(da)\,ds\,.
\end{split}
\]
	
\end{lemma}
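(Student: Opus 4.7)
The plan is the standard entropy dissipation computation for a Fokker--Planck equation, made rigorous using the regularity properties of $\nu_{s,t}$ guaranteed by Lemma~\ref{lem pde}. Write, using $\log \gamma(a) = -U(a)$,
\[
\text{Ent}(\nu_{s,t}) = \int \bigl[\log \nu_{s,t}(a) + U(a)\bigr]\,\nu_{s,t}(a)\,da,
\]
and formally differentiate in $s$. Using mass conservation $\int \partial_s \nu_{s,t}\,da = 0$, the derivative is
\[
\frac{d}{ds}\text{Ent}(\nu_{s,t}) = \int \bigl[\log \nu_{s,t}(a) + U(a)\bigr]\,\partial_s \nu_{s,t}(a)\,da.
\]
Substituting the PDE \eqref{eq fpe} and integrating by parts once in $a$, followed by the identity $\nabla_a \nu_{s,t} = \nu_{s,t}\nabla_a\log \nu_{s,t}$ inside the parenthesis $b_{s,t}\nu_{s,t} + \tfrac{\sigma^2}{2}\nabla_a \nu_{s,t}$, gives exactly
\[
-\int \bigl(\nabla_a \log\nu_{s,t} + \nabla_a U\bigr)\cdot \Bigl(b_{s,t} + \tfrac{\sigma^2}{2}\nabla_a \log\nu_{s,t}\Bigr)\,\nu_{s,t}(da),
\]
which is the desired expression. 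The bulk of the work is justifying these manipulations.

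First, I would record the qualitative properties of $\nu_{s,t}$ supplied by Lemma~\ref{lem pde}: smoothness in $a$, strict positivity, and standard parabolic Gaussian-type upper and lower bounds on $\nu_{s,t}(a)$, together with polynomial decay of $\nabla_a \nu_{s,t}$. These follow from the fact that $b_{s,t}$ is $C^{0,1}$ with linear growth and $\nabla_a b_{s,t}$ is bounded uniformly (the assumption of Theorem~\ref{th with measure flow} we may reuse at this point, though the lemma is stated with the same hypothesis). In particular $\log\nu_{s,t}(a)$ grows at most quadratically in $|a|$, and $\nabla_a\log\nu_{s,t}$ grows at most linearly.

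Next, I would derive a uniform-in-time moment bound $\sup_{s'\le s}\int |a|^q \nu_{s',t}(da) < \infty$ (any $q\ge 2$) by testing the PDE against $|a|^q$; the dissipative structure coming from $\nabla_a U$ via Assumption~\ref{ass exist and uniq} is not needed for local-in-$s$ bounds, so only Gr\"onwall against the linear growth of $b_{s,t}$ is used. Combined with the pointwise bounds on $\nu_{s,t}$, these moment bounds make $\int[\log\nu_{s,t}+U]\nu_{s,t}\,da$ and $\int(\log\nu_{s,t}+U)\partial_s\nu_{s,t}\,da$ absolutely convergent and locally bounded in $s$, hence justify differentiation under the integral (dominated convergence applied to difference quotients, using the explicit identity $x\log x - y\log y = (1+\log y)(x-y) + O((x-y)^2/y)$ from Lemma~\ref{lemma diff of Ent flat}).

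The main obstacle is justifying the integration by parts, since $\log\nu_{s,t}+U$ is only of polynomial growth while $b_{s,t}\nu_{s,t}+\tfrac{\sigma^2}{2}\nabla_a \nu_{s,t}$ decays fast. One way to do this cleanly is to multiply the integrand by a smooth cut-off $\chi_R(a)$, apply the classical divergence theorem, and pass $R\to\infty$; the boundary term $\oint (\log\nu_{s,t}+U)\nu_{s,t}(b_{s,t}+\tfrac{\sigma^2}{2}\nabla_a\log\nu_{s,t})\cdot \nabla\chi_R\,da$ vanishes by the Gaussian decay of $\nu_{s,t}$ against the polynomial growth of the other factors, while the cross term coming from $\nabla\chi_R$ in the interior integral vanishes by dominated convergence. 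Finally, integrating in $s$ produces the stated identity, which is the differential form displayed in the lemma. The measurability in $\omega^W$ follows pathwise since the entire computation is $\omega^W$-deterministic for each fixed $t$.
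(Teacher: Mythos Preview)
Your approach is correct but differs from the paper's. You compute $\frac{d}{ds}\text{Ent}(\nu_{s,t})$ by differentiating the entropy integral directly, substituting the Fokker--Planck equation~\eqref{eq fpe}, and integrating by parts in $a$; the justification proceeds via parabolic regularity, moment bounds, and a cut-off argument. The paper instead introduces the stochastic representation $d\theta_{s,t} = -b_{s,t}(\theta_{s,t})\,ds + \sigma\,dB_s$ with $\mathcal L(\theta_{s,t}\mid \mathcal F_t^W)=\nu_{s,t}$, applies It\^o's formula to $\log \nu_{s,t}(\theta_{s,t}) + U(\theta_{s,t})$, uses the PDE to simplify the drift, takes expectation over $\mathbb P^B$, and finishes with one integration by parts. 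Your route is arguably more elementary since it avoids the auxiliary SDE, but it shifts the technical burden to justifying differentiation under the integral and the integration by parts at infinity. The paper's It\^o-based route trades those for verifying that the stochastic integral is a genuine martingale (which it handles by citing Lemma~\ref{lemma prop grad flow}). Note that the decay and integrability estimates you propose to derive are collected in the paper precisely in Lemma~\ref{lemma prop grad flow}, so you could shorten your argument by invoking that lemma rather than re-deriving Gaussian bounds; your reference to Lemma~\ref{lemma diff of Ent flat} for the difference-quotient control is slightly off, as that lemma concerns perturbations along convex interpolations rather than along the flow, but the elementary inequality you quote from it is of course valid and sufficient for dominated convergence.
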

\begin{proof}
Let 
\[
d\theta_{s,t} = - b_{s,t}(\theta_{s,t})\,ds + \sigma \, dB_s\,, 
\]
where $B$ is an $\mathbb R^p$-Wiener process on some $(\Omega^B, \mathcal F^B, \mathbb P^B)$, where the filtration $\mathbb F^B = (\mathcal F_t^B)$ where
$\mathcal F_s^B := \sigma(B_u:0\leq u \leq s)$.
Let $\Omega := \Omega^W \times \Omega^B$, $\mathcal F := \mathcal F^W \otimes \mathcal F^B$
and $\mathbb P := \mathbb P^W \otimes \mathbb P^B$.
Let $\theta_{0,t}$ be s.t. $\mathcal L(\theta_{0,t}|\mathcal F_t^W) = \nu_{0,t}$. 
It is easy to check, as in the proof of Lemma~\ref{lem pde}, that if $b$ a permissible flow (cf. Definition~\ref{def vect field flow def}) then $\nu_{s,t} = \mathcal L(\theta_{s,t}|\mathcal F_t^W)$.
From Lemma~\ref{lemma prop grad flow} we know that $\nu_{s,t} > 0$ and so from It\^o's formula we get that
\[
\begin{split}
 d\Big( \log(\nu_{s,t}(\theta_{s,t})) + U_t((\theta_{s,t})\Big)	
= & \bigg( \frac{\partial_s \nu_{s,t}}{\nu_{s,t}}(\theta_{s,t}) - \frac{\nabla_a \nu_{s,t}}{\nu_{s,t}}(\theta_{s,t})\cdot b_{s,t}(\theta_{s,t}) - \tfrac{\sigma^2}{2} \bigg|\frac{\nabla_a\nu_{s,t}}{\nu_{s,t}}\bigg|^2 + \tfrac{\sigma^2}{2}\tfrac{\delta_a \nu_{s,t}}{\nu_{s,t}}(\theta_{s,t})\\
& - b_{s,t}(\theta_{s,t})\cdot \nabla_a U_t(\theta_{s,t}) + \tfrac{\sigma^2}{2}\Delta_a U_t(\theta_{s,t}) \bigg)\,ds + dM^B_s\,,
\end{split}
\]
where $M^B_s = \sigma \nabla_a \log(\nu_{s,t}(\theta_{s,t})) dB_s$ is a $\mathcal F^B$-martingale starting from $0$ due to Lemma~\ref{lemma prop grad flow}.
From~\eqref{eq fpe} we get that $\partial_s \nu_{s,t} - b_{s,t} \nabla_a \nu_{s,t} = \nu \nabla_a \cdot b_{s,t} + \tfrac{\sigma^2}{2}\Delta_a \nu_{s,t}$ and so 
\[
\begin{split}
d\Big( \log(\nu_{s,t}(\theta_{s,t})) + U_t((\theta_{s,t})\Big) 
= & \bigg( \nabla_a \cdot b_{s,t}(\theta_{s,t}) - \tfrac{\sigma^2}{2} \bigg|\frac{\nabla_a\nu_{s,t}}{\nu_{s,t}}\bigg|^2 + \sigma^2\tfrac{\delta_a \nu_{s,t}}{\nu_{s,t}}(\theta_{s,t}) \\
&- b_{s,t}(\theta_{s,t})\cdot \nabla_a U_t(\theta_{s,t}) + \tfrac{\sigma^2}{2}\Delta_a U_t(\theta_{s,t}) \bigg)\,ds + dM^B_s\,.
\end{split}
\]
Now observe that $\sigma^2 \tfrac{\delta_a \nu_{s,t}}{\nu_{s,t}} = \sigma^2 \nabla_a\cdot \big (\frac{\nabla_a \nu_{s,t}}{\nu_{s,t}}\big) + \sigma^2 \big| \frac{\nabla_a \nu_{s,t}}{\nu_{s,t}}\big|^2$.
Hence 
\[
\begin{split}
d\Big( \log(\nu_{s,t}(\theta_{s,t})) + U_t((\theta_{s,t})\Big) 
= & \bigg( \nabla_a \cdot b_{s,t}(\theta_{s,t}) + \tfrac{\sigma^2}{2} \bigg|\frac{\nabla_a\nu_{s,t}}{\nu_{s,t}}\bigg|^2 + \sigma^2 \nabla_a \cdot \bigg(\frac{\nabla_a \nu_{s,t}}{\nu_{s,t}}(\theta_{s,t})\bigg) \\
&- b_{s,t}(\theta_{s,t})\cdot \nabla_a U_t(\theta_{s,t}) + \tfrac{\sigma^2}{2}\Delta_a U_t(\theta_{s,t}) \bigg)\,ds + dM^B_s\,.
\end{split}
\]
Taking expectation w.r.t. $\mathbb P^B$ we get 
\[
\begin{split}
d R(\nu_{s,t}|\gamma_t) & = d\mathbb E^B\Big( \log(\nu_{s,t}(\theta_{s,t})) + U_t((\theta_{s,t})\Big) \\
& =  \int\bigg( \nabla_a \cdot b_{s,t} + \tfrac{\sigma^2}{2} \bigg|\frac{\nabla_a\nu_{s,t}}{\nu_{s,t}}\bigg|^2 + \sigma^2 \nabla_a \cdot \bigg(\frac{\nabla_a \nu_{s,t}}{\nu_{s,t}}\bigg) - b_{s,t}\cdot \nabla_a U + \tfrac{\sigma^2}{2}\Delta_a U \bigg)\nu_{s,t}(da)\,ds\,.
\end{split}
\]
Integrating by parts (see Lemma~\ref{lemma prop grad flow}) and noting  that $\int \nabla_a \cdot \big(\frac{\nabla_a \nu_{s,t}}{\nu_{s,t}}\big)\nu_{s,t}\,da = -\int \big|\frac{\nabla_a \nu_{s,t}}{\nu_{s,t}}\big|^2\nu_{s,t}\,da$ this becomes 
\[
\begin{split}
d R(\nu_{s,t}|\gamma_t) 
& =  \int\bigg( - b_{s,t}\cdot \frac{\nabla_a \nu_{s,t}}{\nu_{s,t}} - \tfrac{\sigma^2}{2} \bigg|\frac{\nabla_a\nu_{s,t}}{\nu_{s,t}}\bigg|^2  - b_{s,t}\cdot \nabla_a U - \tfrac{\sigma^2}{2}\nabla_a U \frac{\nabla_a \nu_{s,t}}{\nu_{s,t}}\bigg)\nu_{s,t}(da)\,ds\,.
\end{split}
\]
This completes the proof.
\end{proof}

Next, start working towards the proof of Theorem~\ref{thm necessary cond linear}, which is the necessary part of Pontryagin's optimality principle in the setting of this paper.
To that end we need an expression for the directional derivative of $J^0$ and an estimate for the directional derivative of $J^\sigma$.
We will write $(X_t)_{t\in[0,T]}$ for the solution of~\eqref{eq process} driven by $\nu \in \mathcal V_{2}^W$.
We will work with an additional control $\mu\in \mathcal V^W_{2}$ and define $\nu^\varepsilon_t := \nu_t + \varepsilon(\mu_t - \nu_t)$
 and $(X^{\varepsilon}_t)_{t\in[0,T]}$ for the solution of~\eqref{eq process} driven by $\nu^\varepsilon$.
First, however we need a directional derivative for the forward process~\eqref{eq process}.
Let $V_0=0$ be fixed and consider
\begin{equation}
\label{eq V proc linear}
\begin{split}
dV_t  = & \left[(\nabla_x \Phi_t)(X_t, \nu_t)V_t + \int \tfrac{\delta \Phi_t}{\delta m}(X_t,\nu_t, a)(\mu_t-\nu_t)(da)\right]	\,dt \\
& + \left[ (\nabla_x \Gamma_t)(X_t, \nu_t)V_t + \int \tfrac{\delta \Gamma_t}{\delta m}(X_t,\nu_t, a)(\mu_t-\nu_t)(da) \right]\,dW_t\,.	
\end{split}
\end{equation}
We observe that this is a linear equation and so, under Assumption~\ref{as coefficients new} its solution is unique and has all the moments i.e. for any $p'\geq 1$ we have $\mathbb E^W \sup_{t'\leq t} |V_{t'}|^{p'} < \infty$. 
In the following lemma we will prove that the process $(V_t)_{t\in [0,T]}$ given by~\eqref{eq V proc linear} is the ``variation process'' for~\eqref{eq process} in that it is an $L^2$-directional derivative of $(X_t)_{t\in[0,T]}$ in the direction $\mu-\nu$.  
\begin{lemma}[Variation process is a directional derivative of the forward process]
\label{lemma V process linear}
Assume that $\nabla_x \Phi$ and $\nabla_x \Gamma$ exist and that there is $K>0$ such that for all $(t,m)\in [0,T] \times \mathcal P(\mathbb R^p)$ and for all $x,x' \in \mathbb R^d$ we have 
\[
\begin{split}
& |\nabla_x \Phi_t(x,m)| + |\nabla_x \Gamma_t(x,m)| \leq K\,,\\
& |\nabla_x \Phi_t(x,m)-\nabla_x \Phi_t(x',m)| + |\nabla_x \Gamma_t(x,m) - \nabla_x \Gamma_t(x',m)| \leq K|x-x'|\,.\\	
\end{split}
\]
Further assume that $\tfrac{\delta \Phi_t}{\delta m}$, $\tfrac{\delta^2 \Phi_t}{\delta m^2}$, $\tfrac{\delta \Gamma_t }{\delta m}$ and $\tfrac{\delta^2 \Gamma_t }{\delta m^2}$ exists and moreover there is $K>0$ such that for all $(t,x,m,a,a')\in [0,T] \times \mathbb R^d \times \mathcal P(\mathbb R^p) \times \mathbb R^p \times \mathbb R^p$ we have $\left|\tfrac{\delta^2 \Phi_t}{\delta m^2}(x,m,a,a')\right| + \left|\tfrac{\delta^2 \Gamma_t}{\delta m^2}(x,m,a,a')\right| \leq K$.
Then
\[
\lim_{\varepsilon\searrow 0} \mathbb E^W\left[\sup_{t\leq T} \left|\tfrac{X^{\varepsilon}_t - X_t}{\varepsilon} - V_t\right|^2 \right]= 0\,.
\]	
\end{lemma}
\begin{proof}
Let
\[
V^{\varepsilon}_t := \tfrac{X^{\varepsilon}_t - X_t}{\varepsilon} - V_t
\,\,\, \text{i.e.}\,\,\,
X^{\varepsilon}_t = X_t + \varepsilon(V^{\varepsilon}_t + V_t)\,.
\]
We wish to show that $\mathbb E \sup_{s\leq t}|V^\varepsilon_s|^2  \,   \leq c_T\varepsilon^2 \to 0$ as $\varepsilon \to \infty$. 
To that end we start calculating the terms appearing in the difference quotient. 
First we note that
\[
\begin{split}
& \Phi(X^\varepsilon_t, \nu^\varepsilon_t) - \Phi(X_t,\nu_t) 
= \Phi(X^\varepsilon_t, \nu^\varepsilon_t) - \Phi(X^\varepsilon_t, \nu_t) + \Phi(X^\varepsilon_t,\nu_t) - \Phi(X_t,\nu_t) \\
& = \varepsilon \int_0^1 (\nabla_x \Phi)(X_t + \lambda \varepsilon ( V^\varepsilon_t + V_t), \nu_t) (V^\varepsilon_t + V_t)\,d\lambda 
+ \varepsilon\int_0^1 \int \tfrac{\delta \Phi}{\delta m}(X^\varepsilon_t, (1-\lambda)\nu^\varepsilon_t + \lambda \nu_t,a)(\mu_t-\nu_t)(da)\,d\lambda\,. 	
\end{split}
\]
Hence, rearranging, for the drift we get,
\[
\begin{split}
& \frac1\varepsilon \left[\Phi(X^\varepsilon_t, \nu^\varepsilon_t) - \Phi(X_t,\nu_t) 
- \varepsilon (\nabla_x \Phi)(X_t, \nu_t)V_t - \varepsilon \int \tfrac{\delta \Phi}{\delta m}(X_t,\nu_t, a)(\mu_t - \nu_t)(da)  \right]	\\
& =  \int_0^1 (\nabla_x \Phi)(X_t + \lambda \varepsilon ( V^\varepsilon_t + V_t), \nu_t) V^\varepsilon_t \,d\lambda 
+  \int_0^1 \left[(\nabla_x \Phi)(X_t + \lambda \varepsilon ( V^\varepsilon_t + V_t), \nu_t) - (\nabla_x \Phi)(X_t,\nu_t) \right] V_t\,d\lambda \\
& + \int_0^1 \int \left[ \tfrac{\delta \Phi}{\delta m}(X^\varepsilon_t, (1-\lambda)\nu^\varepsilon_t + \lambda \nu_t,a) - \tfrac{\delta \Phi}{\delta m}(X_t,\nu_t,a) \right](\mu_t-\nu_t)(da)\,d\lambda =: I^{(0)}_t + I^{(1)}_t + I^{(2)}_t =: I_t.
\end{split}
\]
Similarly, for the diffusion coefficient we get,  
\[
\begin{split}
& \frac1\varepsilon \left[\Gamma(X^\varepsilon_t, \nu^\varepsilon_t) - \Gamma(X_t,\nu_t) 
- \varepsilon (\nabla_x \Gamma)(X_t, \nu_t)V_t - \varepsilon \int \tfrac{\delta \Gamma}{\delta m}(X_t,\nu_t, a)(\mu_t - \nu_t)(da)  \right]	\\
& =  \int_0^1 (\nabla_x \Gamma)(X_t + \lambda \varepsilon ( V^\varepsilon_t + V_t), \nu_t) V^\varepsilon_t \,d\lambda 
+  \int_0^1 \left[(\nabla_x \Gamma)(X_t + \lambda \varepsilon ( V^\varepsilon_t + V_t), \nu_t) - (\nabla_x \Gamma)(X_t,\nu_t) \right] V_t\,d\lambda \\
& + \int_0^1 \int \left[ \tfrac{\delta \Gamma}{\delta m}(X^\varepsilon_t, (1-\lambda)\nu^\varepsilon_t + \lambda \nu_t,a) - \tfrac{\delta \Gamma}{\delta m}(X_t,\nu_t,a) \right](\mu_t-\nu_t)(da)\,d\lambda =: J^{(0)}_t + J^{(1)}_t + J^{(2)}_t =: J_t.
\end{split}
\]
Note that
\[
dV^\varepsilon_t = \varepsilon^{-1}[dX^\varepsilon_t - dX_t] - dV_t
\]
and so we then see that for any $p'\geq 1$ we have
\begin{equation*}
|V^\varepsilon_t|^{p'} \leq c_p\bigg|\int_0^t I_r \,dr\bigg|^{p'} + c_p\bigg|\int_0^t J_r\,dW_r\bigg|^{p'}\,.	
\end{equation*}
Hence, due to Burkholder--Davis--Gundy inequality we have, with a constant depending also on $d'$, that
\begin{equation}
\label{eq V proof 1}
\mathbb E \left[ \sup_{s\leq t}|V^\varepsilon_s|^{p'} \right]  \leq  c_{p,T} \mathbb E\left[\int_0^t |I_r|^{p'}\,dr + \bigg(\int_0^t |J_r|^2\,dr\bigg)^{p'/2} \,\right]\,.
\end{equation}
Let $\nu_t^{\lambda,\lambda'}:= (1-\lambda') ((1-\lambda)\nu^\varepsilon_t + \lambda \nu_t) + \lambda' \nu_t $. 
Due to the differentiability assumptions
\[
\begin{split}
& \int_0^1 \int   \tfrac{\delta \Phi}{\delta m}(X^\varepsilon_t, (1-\lambda)\nu^\varepsilon_t + \lambda \nu_t,a) - \tfrac{\delta \Phi}{\delta m}(X^\varepsilon_t,\nu_t,a) (\mu_t-\nu_t)(da)\,d\lambda \\
& =  \int_0^1 \int_0^1 (1-\lambda) \int \int   \tfrac{\delta^2 \Phi}{\delta m^2}(X^\varepsilon_t, \nu_t^{\lambda,\lambda'},a,a')(\nu^\varepsilon_t - \nu_t)(da') (\mu_t-\nu_t)(da)\,d\lambda\,d\lambda' 	 \\
& =  \varepsilon \int_0^1 \int_0^1 (1-\lambda) \int \int   \tfrac{\delta^2 \Phi}{\delta m^2}(X^\varepsilon_t, \nu_t^{\lambda,\lambda'},a,a')(\mu_t - \nu_t)(da') (\mu_t-\nu_t)(da)\,d\lambda\,d\lambda'\,.
\end{split}
\]
Hence, using the assumption of uniform bound on $\tfrac{\delta^2 \Phi}{\delta m^2}$, we have
\[
\begin{split}
\bigg| \int_0^1 \int   \tfrac{\delta \Phi}{\delta m}(X^\varepsilon_t, (1-\lambda)\nu^\varepsilon_t + \lambda \nu_t,a) - \tfrac{\delta \Phi}{\delta m}(X^\varepsilon_t,\nu_t,a) (\mu_t-\nu_t)(da)\,d\lambda \bigg|
\leq  c\epsilon\,.
\end{split}
\]
Using this and our assumptions again yields
\[
\begin{split}
|I^{(2)}_t|^{p'} & = \bigg|\int_0^1 \int \left[ \tfrac{\delta \Phi}{\delta m}(X^\varepsilon_t, (1-\lambda)\nu^\varepsilon_t + \lambda \nu_t,a) - \tfrac{\delta \Phi}{\delta m}(X_t,\nu_t,a) \right](\mu_t-\nu_t)(da)\,d\lambda\bigg|^{p'} \\
& \leq  c_{p'}\bigg|\int_0^1 \int \left[ \tfrac{\delta \Phi}{\delta m}(X^\varepsilon_t, (1-\lambda)\nu^\varepsilon_t + \lambda \nu_t,a) - \tfrac{\delta \Phi}{\delta m}(X_t^\varepsilon,\nu_t,a) \right](\mu_t-\nu_t)(da)\,d\lambda\bigg|^{p'} \\
& \qquad + c_{p'} \bigg(\int_0^1 \int \left| \tfrac{\delta \Phi}{\delta m}(X^\varepsilon_t, \nu_t ,a) - \tfrac{\delta \Phi}{\delta m}(X_t,\nu_t,a) \right||\mu_t-\nu_t|(da)\,d\lambda\bigg)^{p'} 
\\
 & \leq   \, c_{p'}\bigg|\int_0^1 \int   \tfrac{\delta \Phi}{\delta m}(X^\varepsilon_t, (1-\lambda)\nu^\varepsilon_t + \lambda \nu_t,a) - \tfrac{\delta \Phi}{\delta m}(X^\varepsilon_t,\nu_t,a) (\mu_t-\nu_t)(da)\,d\lambda\bigg|^{p'}  \\
 & \qquad + c_{p'} \bigg(\int_0^1 \int | X^\varepsilon_t - X_t| \, |\mu_t-\nu_t|(da)\,d\lambda\bigg)^{p'} 
 \leq c_{p'} \varepsilon^{p'}  + c_{p'} |X_t^\varepsilon - X_t|^{p'}\,. 
\end{split}
\]
Hence
\begin{equation}
\label{eq V proof I2 est}
 |I^{(2)}_t|^{p'}  \leq c_{p'} \varepsilon^{p'}  + c_{p'}  |X^\varepsilon_t  - X_t |^{p'}  =  c_{p'} \varepsilon^{p'}+ c_{p'} \varepsilon^{p'}  |V^\varepsilon_t + V_t |^{p'}   
\end{equation}
and similarly
\begin{equation}
\label{eq V proof J2 est}
 |J^{(2)}_t|^{p'} \leq c_{p'} \varepsilon^{p'}  + c_{p'}  |X^\varepsilon_t  - X_t |^{p'}  =  c_{p'} \varepsilon^{p'} + c_{p'} \varepsilon^{p'}  |V^\varepsilon_t + V_t |^{p'}   \,.
\end{equation}
By our hypothesis $\nabla_x \Phi$ and $\nabla_x \Gamma$ are bounded uniformly in $(t,x,m)$ which implies that 
\[
\begin{split}
|I^{(0)}_t|^{p'} \leq c_{p'} |V_t^\varepsilon|^{p'}\,,\quad 	|I^{(1)}_t|^{p'} \leq c_{p'} |V_t|^{p'}\,,\quad |J^{(0)}_t|^{p'} \leq c_{p'} |V_t^\varepsilon|^{p'}\,,\quad 	|J^{(1)}_t|^{p'} \leq c_{p'} |V_t|^{p'}\,.
\end{split}
\]
Applying H\"older's inequality in~\eqref{eq V proof 1} for ${p'}\geq 2$ and $\varepsilon \leq 1$ we get that
\begin{equation}
\label{eq V proof 2}
\mathbb E \left[ \sup_{t'\leq t}|V^\varepsilon_{t'}|^{p'} \right]  \leq  c_{p',T} \mathbb E\left[\int_0^t |I_r|^{p'}\,dr + \int_0^t |J_r|^{p'}\,dr \,\right]\,.
\end{equation}
Thus, for $\varepsilon \leq 1$ and $p'\geq 2$, we see that
\begin{equation*}
\begin{split}
\mathbb E \left[ \sup_{t'\leq t}|V^\varepsilon_{t'}|^{p'} \right] & \leq  c_{p',T} \mathbb E\left[\int_0^t |I_r|^{p'}\,dr + \int_0^t |J_r|^{p'}\,dr \,\right]\\
& \leq c_{p',T} + c_{p',T} \mathbb E \int_0^t |V^\varepsilon_r|^{p'}\,dr + c_{p',T} \mathbb E \int_0^t |V_r|^{p'}\,dr 
\leq c_{p',T}  \int_0^t \mathbb E \sup_{r'\leq r}|V^\varepsilon_{r'}|^{p'}\,dr + c_{p',T}\,.
\end{split}
\end{equation*}
Gronwall's lemma yields, for any $p'\geq 2$, that
\begin{equation}
\label{eq V proof 3}
\sup_{\varepsilon \leq 1} \mathbb E \left[ \sup_{t'\leq t}|V^\varepsilon_{t'}|^{p'} \right] < \infty\,.	
\end{equation}
By our hypothesis we have that $\nabla_x \Phi$ and $\nabla_x \Gamma$ are Lipschitz continuous is $x$ uniformly in $t$ and $m$.
Consequently, with Young's inequality, one sees that
\[
\begin{split}
\mathbb E \int_0^T \Big[|I^{(1)}_t|^2+|J^{(1)}_t|^2\Big]\,dt & \leq  \varepsilon^2 c \int_0^T \mathbb E \Big[   |V_t^\varepsilon + V_t|^2|V_t|^2 \Big]\,dt 
\leq 
\varepsilon^2 c \int_0^T \mathbb E \Big[ |V_t^\varepsilon + V_t|^{4} + |V_t|^{4} \Big]\,dt \\
& \leq \varepsilon^2 c \int_0^T \mathbb E \Big[ |V_t^\varepsilon|^{4} + |V_t|^{4} + |V_t|^{4} \Big]\,dt
\leq \varepsilon^2 c \int_0^T \mathbb E \Big[ |V_t^\varepsilon|^{4} +  |V_t|^{4} \Big]\,dt \,.	
\end{split}
\]
With~\eqref{eq V proof 3} used with $p'=4$ and with the observation made earlier that $V_t$ has all the moments bounded we get that 
\begin{equation}
\label{eq V proof 4}
\mathbb E \int_0^T \Big[|I^{(1)}_t|^2+|J^{(1)}_t|^2\Big]\,dt \leq c \varepsilon^2\,.	
\end{equation}
Since we already established~\eqref{eq V proof I2 est}-\eqref{eq V proof J2 est} 
and since we may apply~\eqref{eq V proof 3} with $p'=2$ and since $V_t$ has all the moments bounded   
\begin{equation}
\label{eq V proof 5}
\mathbb E \int_0^T \Big[ |I^{(2)}_t|^2 + |J^{(2)}_t|^2\Big]\,dt \leq c\varepsilon^2 + c\varepsilon^2 \mathbb E \int_0^T |V_t^\varepsilon + V_t|^2\,dt \leq c \varepsilon^2\,.	
\end{equation}
From~\eqref{eq V proof 4} and~\eqref{eq V proof 5}, together with~\eqref{eq V proof 2}  it follows that
\[
\begin{split}
\mathbb E  \sup_{r\leq t}|V^\varepsilon_r|^2    & \leq  c \left( \int_0^t \mathbb E \sup_{r'\leq r}|V^\varepsilon_{r'}|^2\,dr + \varepsilon^2 \, \right)\,.	
\end{split}
\]
and by Gronwall's lemma $\mathbb E \sup_{s\leq t}|V^\varepsilon_s|^2  \,   \leq c_T\varepsilon^2 \to 0$ as $\varepsilon \to \infty$. 
\end{proof}

\begin{lemma}[Directional derivative of $J^0$ in terms of the variation process]
\label{lemma diff of J flat}
Let the hypothesis of Lemma~\ref{lemma V process linear} hold and additionally that $\nabla_x F$ and $\nabla_x g$, $\tfrac{\delta F}{\delta m}$, $\tfrac{\delta^2 F}{\delta m^2}$ exist and are jointly continuous in $(t,x,m)$ or $(t,x,m,a)$ respectively and there is  $K>0$ such that for all $(t,x,m,a,a')$ we have 
\[
\left|\tfrac{\delta^2 F_t}{\delta m^2}(x,m,a,a') \right| + \left|\nabla_x \tfrac{\delta F_t}{\delta m}(x,m,a) \right| + |\nabla_x^2 g(x)|\leq K\,\,\,\text{and}\,\,\, |\nabla_x g(x)| \leq K(1+|x|)\,.
\]
Then for any $\nu,\mu \in \mathcal V^W_{2}$ the mapping $\nu \mapsto J^0(\nu)$ defined by~\eqref{eq objective bar J} satisfies
\[
\begin{split}
\tfrac{d}{d\varepsilon} J^0 & \left((\nu_t + \varepsilon(\mu_t-\nu_t)_{t\in[0,T]}, \xi\right)\big|_{\varepsilon=0} \\
 = &  \mathbb E\Bigg[\int_0^T \left[\int \tfrac{\delta F_t}{\delta m}(X_t,\nu_t,a)\,(\mu_t-\nu_t)(da) + (\nabla_x F)(X_t,\nu_t) V_t  \right]\,dt  + (\nabla_x g)(X_T)V_T \Bigg]\,.	
\end{split}
\]	
\end{lemma}
\begin{proof}
We need to show that $I_\varepsilon :=  I^{(1)}_\varepsilon + I^{(2)}_\varepsilon \to 0$ as $\varepsilon\to 0$, where
\[
I^{(1)}_\varepsilon := \mathbb E \bigg| \int_0^T \Big[\varepsilon^{-1}\Big(F(X^\varepsilon_t,\nu^\varepsilon_t) - F(X_t,\nu_t)\Big) - \int \tfrac{\delta F_t}{\delta m}(X_t,\nu_t,a)\,(\mu_t-\nu_t)(da) - (\nabla_x F)(X_t,\nu_t) V_t \Big] \,dt \bigg|\,
\]
and
\[
I^{(2)}_\varepsilon := \mathbb E \Big|\varepsilon^{-1}\big(g(X^\varepsilon_T)-g(X_T)\big) - (\nabla_x g)(X_T)V_T \Big|\,. 
\]

We will start with the simpler case of $I^{(2)}_\varepsilon$.
First we note that
\[
\begin{split}
\varepsilon^{-1}\left[g(X^\varepsilon_T) - g(X_T)\right] & = \varepsilon^{-1}\int_0^1 \frac{d}{d\lambda}g(X_T + \lambda (X^\varepsilon_T-X_T)\,d\lambda 
= \varepsilon^{-1}\int_0^1 \frac{d}{d\lambda}g(X_T + \lambda \varepsilon (V^\varepsilon_T+V_T)\,d\lambda \\
& = \int_0^1 \nabla_x g(X_T + \lambda \varepsilon (V^\varepsilon_T+V_T))(V^\varepsilon_T + V_T) \,d\lambda
\end{split}
\]
and hence 
\[
\begin{split}
I^{(2)}_\varepsilon & = \mathbb E \left|\int_0^1 [\nabla_x g(X_T + \lambda \varepsilon (V^\varepsilon_T+V_T)) - \nabla_x g(X_T)](V^\varepsilon_T + V_T)\,d\lambda + \nabla_x g(X_T)V^\varepsilon_T \right| \\	
& \leq K \varepsilon \mathbb E |V^\varepsilon_T + V_T|^2  + \left(\mathbb E[c(1+|X_T^2|)\right)^{1/2} \left(\mathbb E|V^\varepsilon_T|^2\right)^{1/2}\,,
\end{split}
\]
where we used the assumption that $\|\nabla_x^2g\|\leq K$, that $\forall x\in \mathbb R^d$ we have $|\nabla_x g(x)| \leq K(1+|x|)$ and H\"older's inequality. 
Due to Lemma~\ref{lemma V process linear} we know that $\mathbb E |V^\varepsilon_T + V_T|^2$ is bounded uniformly in $\varepsilon$ and moreover $\left(\mathbb E|V^\varepsilon_T|^2\right)^{1/2} \to 0$ as $\varepsilon \to 0$. 
Hence $I^{(2)}_\varepsilon \to 0$.

To handle $I^{(1)}_\varepsilon$ first note that
\[
\begin{split}
&  \varepsilon^{-1} \left[F(X^{\varepsilon}_t, \nu^\varepsilon_t) - F(X^{\varepsilon}_t, \nu_t) + F(X^{\varepsilon}_t, \nu_t) - F(X_t, \nu_t) \right]\\
& =  \int_0^1 \int \tfrac{\delta F_t}{\delta m}(X^\varepsilon_t,  (1-\lambda)\nu^\varepsilon_t + \lambda \nu_t, a )(\mu_t - \nu_t)(da)	\,d\lambda 
+ \int_0^1(\nabla_x F_t)(X_t + \lambda \varepsilon (V^{\varepsilon}_t + V_t),\nu_t)(V^{\varepsilon}_t - V_t)\,d\lambda\,.
\end{split}
\]
Hence we will write $I^{(1)}_\varepsilon = I^{(1,1)}_\varepsilon + I^{(1,2)}_\varepsilon + I^{(1,3)}_\varepsilon$, where
\[
I^{(1,1)}_\varepsilon := \mathbb E \bigg| \int_0^T \bigg[\int_0^1 \int \tfrac{\delta F_t}{\delta m}(X^\varepsilon_t,  (1-\lambda)\nu^\varepsilon_t + \lambda \nu_t, a )(\mu_t - \nu_t)(da)	\,d\lambda - \int \tfrac{\delta F_t}{\delta m}(X^\varepsilon_t,\nu_t,a)\,(\mu_t-\nu_t)(da)\bigg]\,dt\bigg|\,,
\]
\[
I^{(1,2)}_\varepsilon := \mathbb E \bigg| \int_0^T \bigg[\int_0^1 \int \tfrac{\delta F_t}{\delta m}(X^\varepsilon_t,  \nu_t, a )(\mu_t - \nu_t)(da)	\,d\lambda - \int \tfrac{\delta F_t}{\delta m}(X_t,\nu_t,a)\,(\mu_t-\nu_t)(da)\bigg]\,dt\bigg|
\]
and
\[
I^{(1,3)}_\varepsilon := \mathbb E \bigg| \int_0^T \bigg[\int_0^1(\nabla_x F_t)(X_t + \lambda \varepsilon (V^{\varepsilon}_t + V_t),\nu_t)(V^{\varepsilon}_t - V_t)\,d\lambda - (\nabla_x F)(X_t,\nu_t) V_t \bigg]\,dt\bigg|\,.
\]
First, for $\lambda, \lambda' \in [0,1]$ and $\varepsilon > 0$ fixed, let $\nu_t^{\lambda,\lambda'}:= (1-\lambda') ((1-\lambda)\nu^\varepsilon_t + \lambda \nu_t) + \lambda' \nu_t $.
Using the assumption that $\tfrac{\delta^2 F}{\delta m^2}$ exists and is uniformly bounded we get
\[
\begin{split}
I^{(1,2)}_\varepsilon & \leq 
\mathbb E \int_0^T \bigg|\int_0^1 \int_0^1 (1-\lambda) \int \int \tfrac{\delta^2 F_t}{\delta m^2}(X_t, \nu^{\lambda,\lambda'}_t,a,a')(\nu^\varepsilon_t - \nu_t)(da')(\mu_t-\nu_t)(da)\,d\lambda\,d\lambda'\,dt\bigg|\\
& \leq \varepsilon \mathbb E \int_0^T \bigg|\int_0^1 \int_0^1 (1-\lambda) \int \int \tfrac{\delta^2 F_t}{\delta m^2}(X_t, \nu^{\lambda,\lambda'}_t,a,a')(\mu_t - \nu_t)(da')(\mu_t-\nu_t)(da)\,d\lambda\,d\lambda'\,dt\bigg|\leq \varepsilon 4 KT\,.
\end{split}
\] 
Next, using the assumption that $\nabla_x \tfrac{\delta F}{\delta m}$ exists and is uniformly bounded together with H\"older's inequality we get
\[
I^{(1,3)}_\varepsilon \leq  \mathbb E\int_0^T \bigg[ \int_0^1\int K\lambda\varepsilon |V^\varepsilon_t + V_t||\mu_t-\nu_t|(da)\,d\lambda
\leq \varepsilon 2KT \left(\mathbb  E\sup_{t\in[0,T]} |V^\varepsilon_t + V_t|^2\right)^{1/2}
\]
Thus due to Lemma~\ref{lemma V process linear} we get that $I^{(1)}_\varepsilon\to 0$ as $\varepsilon \to 0$ which concludes the proof.
\end{proof}

\begin{lemma}
\label{lemma der of J as hamiltonian flat}[Directional derivative of $J^0$ in terms of the linear functional derivative of the Hamiltonian]
Under the hypothesis of Lemma~\ref{lemma diff of J flat} for any $\nu,\mu \in \mathcal V^W_{2}$ we have that
\begin{equation*}
\tfrac{d}{d\varepsilon} J^0\left((\nu_t + \varepsilon(\mu_t-\nu_t)_{t\in[0,T]},\xi\right)\big|_{\varepsilon=0} 
= \mathbb E \bigg[ \int_0^T \left[\int \tfrac{\delta H^0}{\delta m}(X_t,Y_t,Z_t,\nu_t,a) (\mu_t-\nu_t)(da)   \right]\,dt \bigg]\,.	
\end{equation*}	
\end{lemma}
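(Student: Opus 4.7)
The starting point is Lemma~\ref{lemma diff of J flat}, which already supplies the $\int\frac{\delta F}{\delta m}(\mu_t-\nu_t)(da)$ piece but leaves two residual terms depending on the variation process $V$ from \eqref{eq V proc linear}, namely $\mathbb E\int_0^T (\nabla_x F)(X_t,\nu_t)\cdot V_t\,dt$ and $\mathbb E[(\nabla_x g)(X_T)\cdot V_T]$. My plan is to absorb these into the adjoint quantities by exploiting the duality between the linear forward equation \eqref{eq V proc linear} for $V$ and the backward equation \eqref{eq adjoint proc} for $(Y,Z)$: recall that the BSDE \eqref{eq adjoint proc} has drift $-\nabla_x H^0$ and terminal condition $\nabla_x g(X_T)$ precisely so that the cancellation below works.

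Concretely, I would apply It\^o's formula to $Y_t\cdot V_t$. The drift is the sum of three ingredients: (i) from $V_t\cdot dY_t$ the term $-V_t\cdot(\nabla_x H^0)(X_t,Y_t,Z_t,\nu_t)$; (ii) from $Y_t\cdot dV_t$ the term $Y_t\cdot(\nabla_x\Phi)(X_t,\nu_t)V_t + Y_t\cdot\int \frac{\delta\Phi}{\delta m}(X_t,\nu_t,a)(\mu_t-\nu_t)(da)$; and (iii) from $d\langle Y,V\rangle_t$ a trace term containing $\mathrm{tr}(Z_t^\top(\nabla_x\Gamma)(X_t,\nu_t)V_t)$ together with $\int \mathrm{tr}(Z_t^\top\frac{\delta\Gamma}{\delta m}(X_t,\nu_t,a))(\mu_t-\nu_t)(da)$. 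Expanding $\nabla_x H^0 = (\nabla_x\Phi)^\top Y + \nabla_x\mathrm{tr}(\Gamma^\top Z) + \nabla_x F$ and relabelling indices, the bilinear-in-$V_t$ pieces cancel, so that the total drift collapses to
\[
-V_t\cdot(\nabla_x F)(X_t,\nu_t) + \int\Big[Y_t\cdot\tfrac{\delta\Phi}{\delta m}(X_t,\nu_t,a) + \mathrm{tr}\big(Z_t^\top\tfrac{\delta\Gamma}{\delta m}(X_t,\nu_t,a)\big)\Big](\mu_t-\nu_t)(da).
\]
Integrating on $[0,T]$, taking expectation, and using $V_0=0$ and $Y_T=(\nabla_x g)(X_T)$ produces an identity that, combined with Lemma~\ref{lemma diff of J flat} and the decomposition $\frac{\delta H^0}{\delta m}=y\cdot\frac{\delta\Phi}{\delta m}+\mathrm{tr}(z^\top\frac{\delta\Gamma}{\delta m})+\frac{\delta F}{\delta m}$, yields exactly the claim.

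The main technical obstacle I expect is the justification that the two stochastic integrals arising in the It\^o expansion are true martingales, so that they vanish in expectation. For this I would combine the $L^p$-bounds on $X_t$ and $V_t$ provided by Lemma~\ref{lemma existence for fixed control} and the proof of Lemma~\ref{lemma V process linear}, the standard $S^2\times H^2$ estimates for the linear BSDE \eqref{eq adjoint proc} (using the uniform boundedness of $\nabla_x\Phi$ and $\nabla_x\Gamma$ from Assumption~\ref{as coefficients new}), and the $\mathcal W_1$-Lipschitz control on $\frac{\delta\Phi}{\delta m}$ and $\frac{\delta\Gamma}{\delta m}$ to dominate the integrands. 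A Burkholder--Davis--Gundy bound on the local-martingale parts then justifies passing the expectation through, and the rest is algebra.
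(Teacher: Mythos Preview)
Your proposal is correct and follows essentially the same approach as the paper: apply It\^o's formula to $Y_t\cdot V_t$, use the definitions of \eqref{eq V proc linear} and \eqref{eq adjoint proc} to identify and cancel the bilinear-in-$V_t$ terms via the expansion of $\nabla_x H^0$, then combine with Lemma~\ref{lemma diff of J flat} and the identity $(\nabla_x g)(X_T)V_T = Y_T V_T$. The paper simply asserts that the stochastic integral part is a square-integrable martingale with mean zero, so your more detailed discussion of the $S^2\times H^2$ bounds and BDG argument is, if anything, more careful than the original.
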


\begin{proof}[Proof of Lemma~\ref{lemma der of J as hamiltonian flat}]
We first observe that due to~\eqref{eq adjoint proc}, ~\eqref{eq V proc linear} and the fact that $V_0 = 0$, we have
\[
\begin{split}
& Y_T V_T  =  \int_0^T Y_t \, dV_t + \int_0^T V_t \,dY_t + \int_0^T \,d\langle V_t, Y_t\rangle \\	
& =  \int_0^T Y_t \left[ (\nabla_x \Phi_t)(X_t,\nu_t) V_t + \int \tfrac{\delta \Phi_t}{\delta m}(X_t,\nu_t,a)\,(\mu_t  - \nu_t)(da)\right]\,dt - \int_0^T V_t  (\nabla_x H^0_t)(X_t,Y_t,Z_t,\nu_t)\,dt \\
& \quad + \int_0^T \text{tr}\left(Z_t^\top  (\nabla_x \Gamma)(X_t,\nu_t)V_t + Z_t^\top\int \tfrac{\delta \Gamma_t}{\delta m}(X_t,\nu_t,a)(\mu_t-\nu_t)(da) \right)\,dt + M_T\,,
\end{split}
\]
where $M$ is a local martingale. 
Using the definition of the Hamiltonian~\eqref{eq hamiltonian} we get 
\[
\begin{split}
 Y_T V_T  = & \int_0^T Y_t \left[(\nabla_x \Phi)(X_t,\nu_t) V_t + \int \tfrac{\delta \Phi_t}{\delta m}\phi(X_t,\nu_t,a)\,(\mu_t-\nu_t) (da)\right]\,dt \\
 & + \int_0^T \text{tr}\left(Z_t^\top  (\nabla_x \Gamma)(X_t,\nu_t)V_t + Z_t^\top\int \tfrac{\delta \Gamma_t}{\delta m}(X_t,\nu_t,a)(\mu_t-\nu_t)(da) \right)\,dt \\
& - \int_0^T    \left[  Y_t (\nabla_x \Phi_t)(X_t,\nu_t)  V_t  + \text{tr}\big[Z_t^\top (\nabla_x \Gamma_t)(X_t,\nu_t) V_t \big] +  V_t (\nabla_x F_t)(X_t,\nu_t)  \right]\,dt + M_T
\end{split}
\]
and so
\[
\begin{split}
Y_T V_T =  & \int_0^T  \bigg[\int Y_t \tfrac{\delta \Phi_t}{\delta m}(X_t,\nu_t,a)\,(\mu_t-\nu_t)(da) 
+ \text{tr}\left[\int Z_t^\top \tfrac{\delta \Gamma_t}{\delta m}(X_t,\nu_t,a)(\mu_t-\nu_t)(da)\right]\\
& - \int V_t (\nabla_x F)(X_t,\nu_t)\,\bigg]\,dt + M_T\,.	
\end{split}
\]
From this, Lemma~\ref{lemma diff of J flat}, noting that $(\nabla_x g)(X_T)V_T =Y_T V_T$ and a usual stopping time argument we see that
\[
\begin{split}
& \tfrac{d}{d\varepsilon} J^0\left((\nu_t + \varepsilon(\mu_t-\nu_t)_{t\in[0,T]},\xi\right)\big|_{\varepsilon=0} 
=  \mathbb E  \int_0^T \bigg[\int \tfrac{\delta F_t}{\delta m}(X_t,\nu_t,a)\,(\mu_t-\nu_t)(da) +  (\nabla_x F_t)(X_t,\nu_t) V_t   \\
& + \int Y_t \tfrac{\delta \Phi_t}{\delta m}(X_t,\nu_t,a)\,(\mu_t-\nu_t)(da) + \text{tr}\left( Z_t^\top \int   \tfrac{\delta \Gamma_t}{\delta m}(X_t,\nu_t,a)(\mu_t-\nu_t)(da) \right) - \int V_t (\nabla_x F)(X_t,\nu_t)\bigg]\,dt  \\
& =  \mathbb E \Bigg[ \int_0^T \int \left[  \tfrac{\delta H^0_t}{\delta m}(X_t,Y_t,Z_t,\nu_t,a) 
\right](\mu_t-\nu_t)(da)\,dt\,\Bigg]\,.
\end{split}
\]
This concludes the proof.
\end{proof}

\begin{proof}[Proof of Theorem~\ref{th with measure flow}] 

Let us write $\nu_t^{\varepsilon} := \nu_t+\varepsilon(\mu_t-\nu_t)$
and
$\mu_t^\varepsilon := \mu_t + \varepsilon(\mu_t - \nu_t)$.
Note that $\mu_t^\varepsilon = \mu_t - \nu_t + \nu_t^\varepsilon$ and so $\mu_t^\varepsilon - \nu_t^\varepsilon = \mu_t-\nu_t$.
From the Fundamental Theorem of Calculus we get
\[
\begin{split}
J^0(\mu) - J^0(\nu) & = \int_0^1 \lim_{\delta\to 0} \frac{1}{\delta}\Big( J^0\big(\nu + (\varepsilon + \delta)(\mu-\nu),\xi\big) - J^0\big(\nu + \varepsilon(\mu-\nu),\xi\big)  \Big) d\varepsilon\\
& = \int_0^1 \lim_{\delta\to 0} \frac{1}{\delta}\Big( J^0\big(\nu^\varepsilon + \delta(\mu^\varepsilon-\nu^\varepsilon),\xi\big) - J^0\big(\nu^\varepsilon,\xi\big)  \Big) d\varepsilon\,.	
\end{split}
\]
Due to Lemma~\ref{lemma der of J as hamiltonian flat} and using the notation introduced in~\eqref{eq fat h} we have
\[
\lim_{\delta\to 0} \frac{1}{\delta}\Big( J^0\big(\nu^\varepsilon + \delta(\mu^\varepsilon-\nu^\varepsilon),\xi\big) - J^0\big(\nu^\varepsilon,\xi\big)  \Big)
= \mathbb E \Bigg[ \int_0^T \left[\int \tfrac{\delta \mathbf H_t^0}{\delta m}(\nu_t^\varepsilon,a) (\mu_t^\varepsilon-\nu_t^\varepsilon)(da)   \right]\,dt\Bigg]\,.	 
\]	
Hence
\begin{equation*}
J^0(\mu) - J^0(\nu) = \int_0^1 \mathbb E \Bigg[ \int_0^T \left[\int \tfrac{\delta \mathbf H_t^0}{\delta m}(\nu_t^\varepsilon,a) (\mu_t-\nu_t)(da)   \right]\,dt\Bigg]\,d\varepsilon
\end{equation*}	
Take $\mu_t = \nu_{s+h,t}$ and $\nu_t = \nu_{s,t}$  and write $\nu_{s,t}^{\varepsilon,h}= \nu_{s,t}+\varepsilon(\nu_{s+h,t}-\nu_{s,t})$. Note that $\nu_t^{\varepsilon,h}\rightarrow \nu_t$ as $h\downarrow0$.  
Hence, using~\eqref{eq fpe}, we get
\[
\begin{split}
\tfrac{d}{ds} J^0(\nu_{s,\cdot}) & = \lim_{h\downarrow0} h^{-1}\left(J^0(\nu_{s+h,t}) - J^0(\nu_{s,t})\right) 
 = \int_0^1 \!\!
\mathbb E \Bigg[ \int_0^T \lim_{h\downarrow0}  \bigg[\int\tfrac{\delta \mathbf H_t^0}{\delta m}(\nu_t^{\varepsilon,h},a)\frac1h(\nu_{s+h,t}-\nu_{s,t})(da) \bigg]\,dt\Bigg]\,d\varepsilon\\
& =
\mathbb E \Bigg[ \int_0^T   \bigg[\int\tfrac{\delta \mathbf H_t^0}{\delta m}(\nu_t,\cdot)\nabla_a \cdot \bigg(\Big(b_{s,t}+\tfrac{\sigma^2}{2} \nabla_a \log \nu_{s,t}\Big)\, \nu_{s,t} \bigg)(da) \bigg]\,dt\Bigg]\,.    
\end{split}
\]
Due to Lemma~\ref{lemma prop grad flow} we have that $\int |\nabla_a \log \nu_{s,t}(a)|^2 \nu_{s,t}(a)\,da < \infty$ and due to standard moment estimates for SDEs that $\int |a|^m \nu_{s,t}(a)\,da <\infty$ for any $m\geq 2$ and for all $s>0$, $t\in [0,T]$ and $\omega^W \in \Omega^W$.
The continuity $a\mapsto \nu_{s,t}(a)$ then implies that $\nabla_a |\log \nu_{s,t}(a)|^2\nu_{s,t}(a) \to 0$ and $|a|^2\nu_{s,t}(a) \to 0$ as $|a|\to \infty$. 
Thus, upon integration by parts there are no boundary terms left, and we get that
\begin{equation}
\label{eq pf of flow thm 1}
\begin{split}
\tfrac{d}{ds} J^0(\nu_{s,\cdot}) = & 
\mathbb E^W \Bigg[ \int_0^T \bigg[-\int \left(\nabla_a\tfrac{\delta \mathbf H_t^0}{\delta m}\right)(\nu_{s,t},\cdot) \Big(b_{s,t}+\tfrac{\sigma^2}{2} \nabla_a \log \nu_{s,t}\Big) \,\nu_{s,t}(da) \bigg]\,dt  \Bigg] \,.	
\end{split}	
\end{equation}
Recalling the definition of $J^\sigma$ and combining~\eqref{eq pf of flow thm 1} and Lemma~\ref{lemma derivative of entropy along flow} we get
\[
\tfrac{d}{ds}J^\sigma(\nu_{s,\cdot}) = - \mathbb E^W \Bigg[\int_0^T \int \bigg[ \left(\nabla_a\tfrac{\delta \mathbf H_t^0}{\delta m}\right)(\nu_{s,t},\cdot) + \tfrac{\sigma^2}{2}\frac{\nabla_a \nu_{s,t}}{\nu_{s,t}} + \tfrac{\sigma^2}{2}\nabla_a U\bigg]\cdot \Big(b_{s,t}+\tfrac{\sigma^2}{2} \nabla_a \log \nu_{s,t}\Big) \nu_{s,t}(da) \,dt \Bigg]\,.
\]
This completes the proof.
\end{proof}

\begin{proof}[Proof of Theorem~\ref{thm necessary cond linear}.]
Let $(\mu_t)_{t\in[0,T]}$  be an arbitrary relaxed control.
Since $(\nu_t)_{t\in[0,T]}$ is optimal we know that 	
$J^\sigma\left(\nu_t + \varepsilon(\mu_t - \nu_t))_{t\in[0,T]}\right) \geq J^\sigma(\nu)$ for any $\varepsilon > 0$.
From this, Lemma~\ref{lemma der of J as hamiltonian flat} and~\ref{lemma diff of Ent flat} we get that
\[
\begin{split}
0 & \leq \limsup_{\varepsilon \to 0} \tfrac1\varepsilon\left(J^\sigma(\nu_t + \varepsilon(\mu_t - \nu_t))_{t\in[0,T]} - J^\sigma(\nu)\right)\\
& \leq \mathbb E\int_0^T  \int \left[\tfrac{\delta H^0}{\delta m}(X_t, Y_t, Z_t,\nu_t,a) + \tfrac{\sigma^2}2(\log \nu_t(a)  - \log \gamma(a))\right]\, (\mu_t-\nu_t)(da)  \,dt\,.	
\end{split}
\]
Now assume there is $S \in \mathcal F \otimes \mathcal B([0,T])$ with strictly positive $\mathbb P \otimes \lambda$ (with $\lambda$ denoting the Lebesgue on $\mathcal B([0,T])$) measure such that 
\[
\mathbb E\int_0^T \mathds{1}_S \int \left[\tfrac{\delta H^0}{\delta m}(X_t, Y_t, Z_t,\nu_t,a) + \tfrac{\sigma^2}2(\log \nu_t(a)  - \log \gamma(a))\right]\, (\mu_t-\nu_t)(da)\,dt < 0
\]
Define $\tilde \mu_t := \mu_t \mathds{1}_S + \nu_t \mathds{1}_{S^c}$. 
Then by the same argument as above 
\[
\begin{split}
0 & \leq \mathbb E \int_0^T \int \left[\tfrac{\delta H^0}{\delta m}(X_t, Y_t, Z_t,\nu_t,a) + \tfrac{\sigma^2}2(\log \nu_t(a)  - \log \gamma(a))\right]\, (\tilde\mu_t-\nu_t)(da)\,dt \\
& = \mathbb E\int_0^T \mathds{1}_S \int \left[\tfrac{\delta H^0}{\delta m}(X_t, Y_t, Z_t,\nu_t,a) + \tfrac{\sigma^2}2(\log \nu_t(a)  - \log \gamma(a))\right] \, (\mu_t-\nu_t)(da)\,dt < 0	
\end{split}
\]
leading to a contradiction. 
\end{proof}

\begin{proof}[Proof of Theorem \ref{thm unique minimiser}]

Fix $t\in [0,T]$ and $\omega^W \in \Omega^W$. 
Let $b_{s,t}(a) := (\nabla_a \tfrac{\delta \mathbf H_t^{0}}{\delta m})(\mu_s, a) + \tfrac{\sigma^2}{2}(\nabla_a U)(a)$
and $\mu_{s,t} = \mathcal L(\theta_{s,t} | \mathcal F_t^W)$.
As in the proof of Lemma~\ref{lem pde} we see that $\mu_{s,t}$ is a solution to
\begin{equation}
\label{eq fpe2}
\partial_s \mu_{s,t} = \nabla_a \cdot \left( b_{s,t} \mu_{s,t} + \tfrac{\sigma^2}{2}\nabla_a \mu_{s,t}\right)\,,\,\,\,s\geq0\,, \,\,\, \mu_{s,0} = \mu^0_t := \mathcal L(\theta_{0,t}| \mathcal F_t^W)\,.	
\end{equation}
Due to Lemma~\ref{lem pde} we know that the solution is unique and moreover for each $t\in [0,T]$ and $\omega^W \in \Omega^W$ fixed we have $\mu_{\,\cdot,t} \in C^{1,\infty}((0,\infty)\times \mathbb R^p; \mathbb R)$.
Also, $P_s \mu^0 = \mu_{s,\cdot}$ so $P_s$ is the solution operator for~\eqref{eq fpe2}.

Since $\mu^\ast$ is an invariant measure $0 = \rho(P_s \mu^\ast, \mu^\ast) = \big(\mathbb E^W\big[\mathcal W_2^T(P_s \mu^\ast, \mu^\ast)^2\big)^{1/2}$ we get that for almost all  $t\in [0,T]$ and $\omega^W \in \Omega^W$ we have $(P_s \mu^\ast)_t = \mu^\ast_t$ and so $\partial_s \mu^\ast_{s,t} = 0$.
Hence for almost all  $t\in [0,T]$ and $\omega^W \in \Omega^W$ we have that $\mu^\ast_t$ is a solution to the stationary Kolmogorov--Fokker--Planck equation
\begin{equation}
\label{eq stationary fpe}
0 = \nabla_a \cdot \bigg( \Big( (\nabla_a \tfrac{\delta \mathbf H_t^{0}}{\delta m})(\mu^\ast,\cdot) + \tfrac{\sigma^2}{2}(\nabla_a U) \Big) \mu^\ast_{t} + \tfrac{\sigma^2}{2}\nabla_a \mu^\ast_{t}\bigg)\,.	
\end{equation}
This implies that $\mu^\ast \in \mathcal I^\sigma$.
This proves item i).

Next we will show that if the invariant measure is unique then  $\{\mu^\ast\} = \mathcal I^\sigma$.
To that end we will first show by contradiction that $\mu^\ast$ is at least (locally) optimal.
Assume that $\mu^\ast$ is not  the (locally) optimal control for $J^\sigma$ defined in~\eqref{eq objective bar J}. 
Then for some $\mu^0 \in \mathcal V_2^W$ it holds that $J^\sigma(\mu^0) < J^\sigma(\mu^\ast)$.
We have by assumption that $\lim_{s\to\infty} P_s \mu^0 = \mu^\ast$.
From this, Theorem~\ref{th with measure flow} and from lower semi-continuity of $J^\sigma$ we get 
\begin{equation}
\label{eq proof of uniq min}
\begin{split}
 J^\sigma(\mu^\ast) - J^\sigma(\mu^0) & \leq \liminf_{s\to \infty} J^\sigma(P_s \mu^0) - J^\sigma(\mu^0) \\
& = - \liminf_{s\to \infty}\int_0^s \mathbb E^W \int_0^T  \left[\int  \left| \left(\nabla_a\tfrac{\delta \mathbf H^\sigma}{\delta m}\right)((P_s\mu^0)_t,a) \right|^2 \,(P_s\mu^0)_t(da)   \right]\,dt\,ds \leq 0	\,.
\end{split}
\end{equation}
This is a contradiction and so $\mu^\ast$ must be (locally) optimal.

On the other hand for any (locally) optimal control $\nu^\ast \in \mathcal V_2^W$ we have for any $\nu \in \mathcal V_2^W$, due to Theorem~\ref{thm necessary cond linear} that 
\[
0 \leq \mathbb E^W \bigg[ \int_0^T \int  \tfrac{\delta \mathbf H^\sigma_t}{\delta m}(\nu^\ast,a)(\nu_t - \nu^\ast_t)(da)\,dt \bigg] \,.
\]
This together Lemma~\ref{lem constant} implies that $\nu^\ast$ is a constant function of $a$ and so $\nu^\ast \in \mathcal I^\sigma$, where $\mathcal I^\sigma$ is defined in~\eqref{eq foc}.
From the uniqueness of $\mu^\ast$ as the invariant measure we get that the set of local minimizers is a singleton. 
Consider now any $\mu^0 \in \mathcal V^W_2$ s.t. $\mu^0\neq \mu^\ast$. 
We know $\mu^0$ is not a local minimizer. 
Moreover from~\eqref{eq proof of uniq min} we know that $J^\sigma(\mu^\ast) \leq J^\sigma(\mu^0)$.
This completes the proof.
\end{proof}

\section{Existence and Uniqueness of Solutions to the Mean-field system and exponential convergence to the invariant measure}
\label{sec ex uniq and conve}

\begin{lemma}[System dissipativity]
\label{lem sys diss}
Let Assumptions~\ref{ass dissipativity of F},~\ref{ass for bsde estimates} and \ref{ass for gradient system} hold. 
Let $(\theta_{s,t})_{s\geq 0, t\in [0,T]}$ be a solution to~\eqref{eq mfsgd}-\eqref{eq mfsgd 2} and let $\mu_{s,t} := \mathcal L(\theta_{s,t}|\mathcal F^W_t)$.
If $\mathbb E\int_0^T |\theta_{0,t}|^2 \,dt < \infty$
then there is $\hat c>0$ such that for any $s\geq 0$ we have
\[
-\mathbb E \int_0^T (\nabla_a \tfrac{\delta \mathbf H_t^{0}}{\delta m})(\mu_{s,\cdot},\theta_{s,t}) \theta_{s,t}\,dt
\leq -\frac{\kappa_f}{2} \mathbb E \int_0^T |\theta_{s,t}|^2\,dt  + \hat c\left(1 + |\xi|^2 + \mathbb E\int_0^T |\theta_{s,t}|^2\,dt\right)\,.
\]
The constant $\hat c$ is independent of $\kappa_f$, $\kappa_u$ and $\sigma$ but depends on $T$, $K$ and the constants arising in Lemmas~\ref{bsde bounds} and~\ref{lemma existence for fixed control}.	
\end{lemma}
\begin{proof}
Recall that
\[
\begin{split}
(\nabla_a \tfrac{\delta \mathbf H_t^{0}}{\delta m})(\mu_{s,\cdot},\theta_{s,t}) 
= &  (\nabla_a \tfrac{\delta \Phi_t}{\delta m})(X_t(\mu_{s,\cdot}), \mu_{s,t}, \theta_{s,t}(\mu)) \,Y_t(\mu_{s,\cdot})
+ (\nabla_a \tfrac{\delta \Gamma_t}{\delta m})(X_t(\mu_{s,\cdot}), \mu_{s,t}, \theta_{s,t}(\mu)) \,Z_t(\mu_{s,\cdot})\\
& + (\nabla_a \tfrac{\delta F^L_t}{\delta m})(X_t(\mu_{s,\cdot}), \mu_{s,t}, \theta_{s,t}(\mu)) 
+ (\nabla_a \tfrac{\delta F^C_t}{\delta m})(X_t(\mu_{s,\cdot}), \mu_{s,t}, \theta_{s,t}(\mu))\,. 	
\end{split}
\]
Then due to Assumption~\ref{ass dissipativity of F} we have
\begin{align}
\label{eq proof system diss 1}
- \mathbb E \int_0^T (\nabla_a \tfrac{\delta \mathbf H_t^{0}}{\delta m})(\mu_{s,\cdot},\theta_{s,t}) \theta_{s,t}\,dt
\leq  
\mathbb E\int_0^T|\theta_{s,t}|\Big(|I^{(1)}_{s,t}|+|I^{(2)}_{s,t}|+|I^{(3)}_{s,t}|\Big)\,dt - \frac{\kappa_f}{2}\mathbb E\int_0^T |\theta_{s,t}|^2\,dt  \,,
\end{align}
where
\[
I^{(1)}_{s,t} = (\nabla_a \tfrac{\delta \Phi_t}{\delta m})(X_t(\mu_{s,\cdot}), \mu_{s,t}, \theta_{s,t}(\mu)) \,Y_t(\mu_{s,\cdot})\,,\,\,\,
I^{(2)}_{s,t} = (\nabla_a \tfrac{\delta \Gamma_t}{\delta m})(X_t(\mu_{s,\cdot}), \mu_{s,t}, \theta_{s,t}(\mu)) \,Z_t(\mu_{s,\cdot})\,,
\]
\[
I^{(3)}_{s,t} = (\nabla_a \tfrac{\delta F^L_t}{\delta m})(X_t(\mu_{s,\cdot}), \mu_{s,t}, \theta_{s,t}(\mu)) \,.
\]
We now observe that Assumption~\ref{ass for gradient system} ii) yields
\[
|I^{(1)}_{s,t}| \leq K\left(1 + |X_t(\mu_{s,\cdot})|+\mathcal W_2(\mu_{s,t},\delta_0) + |\theta_{s,t}| \right)|Y_t(\mu_{s,\cdot})|\,.
\] 
With Lemmas~\ref{bsde bounds} and~\ref{lemma existence for fixed control} and Young's inequality we observe that
\begin{equation}
\label{eq proof system diss 2}
\begin{split}
\mathbb E\int_0^T|\theta_{s,t}||I^{(1)}_{s,t}|\,dt & \leq K\|Y(\mu(\nu_{s,\cdot})\|_{\mathbb H^\infty}\mathbb E\int_0^T \left[\tfrac12 + \tfrac12|X_t(\mu_{s,\cdot})|^2 + \tfrac12 \mathcal W_2^2(\mu_{s,t},\delta_0) + \tfrac52 |\theta_{s,t}|^2\right]\,dt \\
& \leq cT + cT(1+|\xi|^2) + c\mathbb E\int_0^T |\theta_{s,t}|^2\,dt\,.	
\end{split}
\end{equation}
With Assumption~\ref{ass for gradient system} i), Corollary~\ref{corollary to fefferman} and Lemma~\ref{bsde bounds} we see that 
\begin{equation}
\label{eq proof system diss 3}
\begin{split}
\mathbb E\int_0^T|\theta_{s,t}||I^{(2)}_{s,t}|\,dt 
& \leq K\mathbb E\int_0^T|\theta_{s,t}||Z_{t}(\mu_{s,\cdot})|\,dt 
\leq K \sqrt{2} \|Z(\mu_{s,\cdot})\cdot W\|_{\text{BMO}} \,\mathbb E\bigg[\bigg(\int_0^T |\theta_{s,t}|^2\,dt\bigg)^{1/2}\bigg] \\
& \leq c\bigg(1+\mathbb E \int_0^T |\theta_{s,t}|^2\,dt\bigg)\,.
\end{split}
\end{equation}
Finally, again with Lemma~\ref{lemma existence for fixed control} and Young's inequality, we obtain 
\[
\mathbb E\int_0^T|\theta_{s,t}||I^{(3)}_{s,t}|\,dt \leq cT + cT(1+|\xi|^2) + c\mathbb E\int_0^T |\theta_{s,t}|^2\,dt\,.
\]
Combining this, with~\eqref{eq proof system diss 1}, \eqref{eq proof system diss 2} and~\eqref{eq proof system diss 3} we conclude that
\begin{equation*}
-\mathbb E \int_0^T (\nabla_a \tfrac{\delta \mathbf H_t^{0}}{\delta m})(\mu_{s,\cdot},\theta_{s,t}) \theta_{s,t}\,dt
\leq -\frac{\kappa_f}{2} \mathbb E \int_0^T |\theta_{s,t}|^2\,dt  + c + cT + cT(1+|\xi|^2) + c\mathbb E\int_0^T |\theta_{s,t}|^2\,dt\,.
\end{equation*}
Choosing $\hat c>0$ appropriately concludes the proof.	
\end{proof}

\begin{lemma}[A priori estimate]
\label{lem apriori}
Let Assumptions~\ref{ass dissipativity of U}, \ref{ass dissipativity of F}, \ref{ass for bsde estimates} and~\ref{ass for gradient system} hold. 
Let 
$(\theta_{s,t})_{s\geq 0, t\in [0,T]}$ be a solution to~\eqref{eq mfsgd}-\eqref{eq mfsgd 2}.
Let $\mathbb E\int_0^T |\theta_{0,t}|^2 \,dt < \infty$.
If $\lambda:=\sigma^2\kappa_u + \kappa_f - \hat c>0$
 then for any $s\geq 0$ we have
\begin{equation}
\int_{0}^{T}\mathbb E|\theta_{s,t}|^2\,dt
\leq e^{-\lambda s} \int_0^T \mathbb E |\theta_{0,t}|^2 \,dt + \frac1\lambda(p\sigma^2 T + \hat c(1+|\xi|^2)\,.
\end{equation}	
\end{lemma}
\begin{proof}
Let $\mu_{s,t} := \mathcal L(\theta_{s,t}|\mathcal F^W_t)$.
Applying It\^o's formula in~\eqref{eq mfsgd}, first with $t\in [0,T]$ fixed, then integrating over $[0,T]$, leads to
\begin{align*}
& e^{\lambda s} \int_{0}^{T}|\theta_{s,t}|^2\,dt
=\int_0^T |\theta_{0,t}|^2 \,dt  + \lambda \int_{0}^{s} e^{\lambda v} \int_{0}^{T} |\theta_{v,t}|^2\,dt\,dv+p\sigma^2 T \int_{0}^{s}e^{\lambda v}  \,dv\\
&\quad -2\int_{0}^{s}e^{\lambda v} \int_{0}^{T} \bigg(\tfrac{\sigma^2}{2}\nabla_a U_t(\theta_{v,t})
+(\nabla_a \tfrac{\delta \mathbf H_t^{0}}{\delta m})(\mu_{v,\cdot},\theta_{v,t})\bigg) \theta_{v,t} \,dt\,dv + 2 \sigma  \int_0^T \int_0^s e^{\lambda v}  \theta_{v,s}\,  dB_v\,dt\,.\\ 
\end{align*}
Taking expectation (after employing the usual stopping time arguments) yields that
\begin{align*}
& e^{\lambda s} \int_{0}^{T}\mathbb E|\theta_{s,t}|^2\,dt
=\int_0^T \mathbb E |\theta_{0,t}|^2 \,dt + p \sigma^2 T \int_{0}^{s}e^{\lambda v}  \,dv\\
&\quad +\int_{0}^{s}e^{\lambda v} \int_{0}^{T} \mathbb E  \left[ \lambda |\theta_{v,t}|^2-2\bigg(\tfrac{\sigma^2}{2}\nabla_a U_t(\theta_{v,t})
+(\nabla_a \tfrac{\delta \mathbf H_t^{0}}{\delta m})(\mu_{v,\cdot},\theta_{v,t})\bigg) \theta_{v,t} \right]\,dt\,dv \,.\\ 
\end{align*}
With Assumptions~\ref{ass dissipativity of U} and Lemma~\ref{lem sys diss} we can obtain that
\begin{align*}
& e^{\lambda s} \int_{0}^{T}\mathbb E|\theta_{s,t}|^2\,dt
\leq \int_0^T \mathbb E |\theta_{0,t}|^2 \,dt + p\sigma^2 T \int_{0}^{s}e^{\lambda v}  \,dv\\
&\quad +\int_{0}^{s}e^{\lambda v} \int_{0}^{T} \mathbb E  \left[ (\lambda - \sigma^2\kappa_u - \kappa_f + \hat c)|\theta_{v,t}|^2 \right]\,dt\,dv + \int_0^s e^{\lambda v} \hat{c}(1+|\xi|^2)\,dv \,.
\end{align*}
Taking $\lambda = \sigma^2\kappa_u + \kappa_f - \hat c>0$ we then have 
\[
\int_{0}^{T}\mathbb E|\theta_{s,t}|^2\,dt
\leq e^{-\lambda s} \int_0^T \mathbb E |\theta_{0,t}|^2 \,dt + (p\sigma^2 T + \hat c(1+|\xi|^2)\frac1\lambda(1-e^{-\lambda s})\,.
\]
This concludes the proof.
\end{proof}

Fix $S > 0$, $I:=[0,S]$,
$
C(I; \mathcal V_2^W) := \left\{ \nu = (\nu_{s,\cdot})_{s\in I} : \nu_{s,\cdot} \in  \mathcal V_2^W\,\,\text{and}\,\,\lim_{s'\to s} \rho_2(\nu_{s',\cdot},\nu_{s,\cdot}) = 0 \,\,\,\forall s\in I \right\}$.
Consider $\mu \in C(I,\mathcal V_2^W)$. 
For each $\mu_{s,\cdot} \in \mathcal V_2^W$, $s\geq 0$ we obtain unique solution to~\eqref{eq process} and \eqref{eq adjoint proc} which we denote $(X_{s,\cdot}, Y_{s,\cdot},Z_{s,\cdot})$.
Moreover, for each $t\in [0,T]$ the SDE
\begin{equation}
\label{eq lang in proof}
d\theta_{s,t}(\mu) = - \bigg((\nabla_a \tfrac{\delta \mathbf H_t^{0}}{\delta m})(\mu_{s,t},\theta_{s,t}(\mu)) + \tfrac{\sigma^2}{2}(\nabla_a U)(\theta_{s,t}(\mu))\bigg)\,ds + \sigma\,dB_s	\,,\,\,\,s\geq 0
\end{equation}
has a unique strong solution, see e.g.~\cite[Theorem 3.1]{krylov1981stochastic}.
We denote the measure in
$\mathcal V_2^W$ induced by 
$\theta_{s,t}$ conditioned on $\mathcal F_t^W$ for each $t\in[0,T]$ as $\mathcal L(\theta(\mu)_{s,\cdot}|W)$. 
Our aim will be to obtain a contraction based on the map $\Psi$ given by $\mathcal C(I,\mathcal V_2^W) \ni \mu \mapsto  \{ \mathcal L(\theta_{s,\cdot}(\mu)\,|\,W(\omega^W)) : \omega^W \in \Omega^W, s\in I\}$. 
Before we do that we need the following estimate.

\begin{lemma}[Linearised flow monotonicity]
\label{lem sys mono}
Let Assumptions~\ref{ass dissipativity of F},~\ref{ass for bsde estimates} and \ref{ass for gradient system} hold. 
Let $\mu,\mu' \in C(I,\mathcal V_2^W)$ and let $\theta(\mu)$, $\theta(\mu')$ be the two solutions to~\eqref{eq lang in proof} arising from $\mu$ and $\mu'$.  
Then there is $L>0$ such that for any $s\geq 0$,
\[
\begin{split}
 - 2 & \mathbb E^W \int_0^T \left(\nabla_a \tfrac{\delta \mathbf H_t^{0}}{\delta m}(\mu_{s,\cdot},\theta_{s,t}(\mu)) - \nabla_a \tfrac{\delta \mathbf H_t^{0}}{\delta m}(\mu'_{s,\cdot},(\theta_{s,t}(\mu'))\right)\Big(\theta_{s,t}(\mu)-\theta_{s,t}(\mu')\Big)\,dt\\
&\leq -\kappa_f \mathbb E^W \int_0^T |\theta_{s,t}(\mu) - \theta_{s,t}(\mu')|^2\,dt + L \mathbb E^W \int_0^T |\theta_{s,t}(\mu) - \theta_{s,t}(\mu')|^2\,dt  + L\rho_2(\mu_{s,\cdot},\mu'_{s,\cdot})^2 \,.	
\end{split}
\]
The constant $L$ is independent of $\kappa_f$ and $\sigma$ but depends on $T$, $K$ and the constants arising in Lemmas~\ref{lemma sde stability},~\ref{bsde bounds} and \ref{lemma:bsde-diff-est}.	
\end{lemma}

\begin{proof}
We see that due to Assumption~\ref{ass dissipativity of F} we have
\[
\begin{split}
 - 2 & \mathbb E^W \int_0^T \left(\nabla_a \tfrac{\delta \mathbf H_t^{0}}{\delta m}(\mu_{s,\cdot},\theta_{s,t}(\mu)) - \nabla_a \tfrac{\delta \mathbf H_t^{0}}{\delta m}(\mu'_{s,\cdot},\theta_{s,t}(\mu'))\right)\Big(\theta_{s,t}(\mu)-\theta_{s,t}(\mu')\Big)\,dt\\
= & -2\mathbb E^W \int_0^T \left(\nabla_a \tfrac{\delta F^C_t}{\delta m}(X_{s,t}(\mu'), \mu'_{s,t},\theta_{s,t}(\mu)) -\nabla_a \tfrac{\delta F^C_t}{\delta m}(X_{s,t}(\mu'), \mu'_{s,t},\theta_{s,t}(\mu'))\right)\Big(\theta_{s,t}(\mu)-\theta_{s,t}(\mu')\Big)\,dt\\
& -2 \mathbb E^W \int_0^T 	(\theta_{s,t}(\mu)-\theta_{s,t}(\mu'))\Big(I^{(1)}_{s,t} + I^{(2)}_{s,t} + I^{(3)}_{s,t} + I^{(4)}_{s,t}\Big)\\
\leq & -\kappa_f \mathbb E^W \int_0^T |\theta_{s,t}(\mu)-\theta_{s,t}(\mu')|^2\,dt +  2 \mathbb E \int_0^T 	|\theta_{s,t}(\mu)-\theta_{s,t}(\mu')|\Big|I^{(1)}_{s,t} + I^{(2)}_{s,t} + I^{(3)}_{s,t} + I^{(4)}_{s,t}\Big|\,dt\,,
\end{split}
\]
where
\[
I^{(1)}_{s,t} := (\nabla_a \tfrac{\delta \Phi_t}{\delta m})(X_t(\mu_{s,\cdot}), \mu_{s,t}, \theta_{s,t}(\mu)) \,Y_t(\mu_{s,\cdot}) - (\nabla_a \tfrac{\delta \Phi_t}{\delta m})(X_t(\mu'_{s,\cdot}), \mu'_{s,t}, \theta_{s,t}(\mu')) \,Y_t(\mu'_{s,\cdot})\,, 
\]
\[
I^{(2)}_{s,t} := \text{tr}\bigg[(\nabla_a \tfrac{\delta \Gamma_t}{\delta m})(X_t(\mu_{s,\cdot}), \mu_{s,t}, \theta_{s,t}(\mu)) \,Z_t(\mu_{s,\cdot}) - (\nabla_a \tfrac{\delta \Gamma_t}{\delta m})(X_t(\mu'_{s,\cdot}), \mu'_{s,t}, \theta_{s,t}(\mu')) \,Z_t(\mu'_{s,\cdot})\bigg]\,, 
\]
\[
I^{(3)}_{s,t} :=(\nabla_a \tfrac{\delta F^L_t}{\delta m})(X_t(\mu_{s,\cdot}), \mu_{s,t}, \theta_{s,t}(\mu))  - (\nabla_a \tfrac{\delta F^L_t}{\delta m})(X_t(\mu'_{s,\cdot}), \mu'_{s,t}, \theta_{s,t}(\mu'))\,, 
\]
and
\[
I^{(4)}_{s,t} :=(\nabla_a \tfrac{\delta F^C_t}{\delta m})(X_t(\mu_{s,\cdot}), \mu_{s,t}, \theta_{s,t}(\mu))  - (\nabla_a \tfrac{\delta F^C_t}{\delta m})(X_t(\mu'_{s,\cdot}), \mu'_{s,t}, \theta_{s,t}(\mu))\,. 
\]
Assumption~\ref{ass for gradient system} ii) and Lemma~\ref{bsde bounds} yield
\[
\begin{split}
& \mathbb E^W \int_0^T |\theta_{s,t}(\mu) - \theta_{s,t}(\mu')| |I^{(1)}_{s,t} |\,dt 
\leq  C \mathbb E^W \int_0^T \bigg[ |\theta_{s,t}(\mu) - \theta_{s,t}(\mu')| |X_t(\mu_{s,\cdot}) - X_t(\mu'_{s,\cdot})| \\
& \qquad \qquad\qquad\qquad\qquad\qquad + |\theta_{s,t}(\mu) - \theta_{s,t}(\mu')|\mathcal W_2(\mu_{s,t},\mu'_{s,t}) + |\theta_{s,t}(\mu) - \theta_{s,t}(\mu')|^2
\bigg]\,dt \\
& \leq C \mathbb E^W \int_0^T \bigg[ |\theta_{s,t}(\mu) - \theta_{s,t}(\mu')|^2+ |X_t(\mu_{s,\cdot}) - X_t(\mu'_{s,\cdot})|^2 
  + \mathcal W_2(\mu_{s,t},\mu'_{s,t})^2\
\bigg]\,dt\,.   	
\end{split}
\]
With Assumption~\ref{ass for gradient system} i) and Lemma~\ref{lemma sde stability} we can observe that
\[
\mathbb E^W \int_0^T |\theta_{s,t}(\mu) - \theta_{s,t}(\mu')| |I^{(1)}_{s,t} |\,dt \leq C^{(1)} \mathbb E^W \int_0^T |\theta_{s,t}(\mu) - \theta_{s,t}(\mu')|^2 \,dt + C^{(1)}\rho_2(\mu_{s,\cdot},\mu'_{s,\cdot})^2 \,.
\]
Due to Assumption~\ref{ass for gradient system} and Lemma~\ref{lemma:bsde-diff-est} we have
\[
\mathbb E^W \int_0^T |\theta_{s,t}(\mu) - \theta_{s,t}(\mu')| |I^{(2)}_{s,t} |\,dt \leq C^{(2)}\mathbb E^W \int_0^T |\theta_{s,t}(\mu) - \theta_{s,t}(\mu')|^2 \,dt  + C^{(2)}\rho_2(\mu_{s,\cdot},\mu'_{s,\cdot})^2\,.
\]
Assumption~\ref{ass for gradient system} ii) and Lemma~\ref{lemma sde stability} allow us to conclude that
\[
\mathbb E^W \int_0^T |\theta_{s,t}(\mu) - \theta_{s,t}(\mu')| |I^{(3)}_{s,t} + I^{(4)}_{s,t} |\,dt \leq C^{(3)} \mathbb E^W \int_0^T |\theta_{s,t}(\mu) - \theta_{s,t}(\mu')|^2 \,dt + C^{(3)}\rho_2(\mu_{s,\cdot},\mu'_{s,\cdot})^2\,.
\]
Letting $L:=C^{(1)}+C^{(2)}+C^{(3)}$ completes the proof.
\end{proof}

\begin{proof}[Proof of Lemma~\ref{thm:WellposednessMeanField}]

{\bf Step 1.} 
We need to show that $ \{ \mathcal L(\theta(\mu)_{s,\cdot}|W(\omega^W)) : \omega^W \in \Omega^W, s\in I\} \in \mathcal C(I, \mathcal V_2^W)$. 
This amounts to showing that we have the appropriate integrability and continuity.
Integrability follows from the same argument as in the proof Lemma~\ref{lem apriori} with $\theta_{s,t}$ replaced by $\theta_{s,t}(\mu)$.
To establish the continuity property note that for $s'\geq s$ we have 
\[
\theta_{s',t}(\mu) - \theta_{s,t}(\mu) = -\int_s^{s'} \left( \nabla_a \tfrac{\delta \mathbf H^0}{\delta m}(\mu_{r,\cdot}, \theta_{r,t}(\mu)) - \tfrac{\sigma^2}{2}\nabla_a U(\theta_{r,t}(\mu))\right)\,dr + \sigma (B_{s'} - B_s)\,.
\]
From Lemma~\ref{lem apriori} we know that there is $c>0$ independent of $r\in [s',s]$ such that 
$
\mathbb E^W \int_0^T |\theta_{r,t}(\mu)|^2\,dt < c 
$ 
and hence $\mathbb E\int_s^{s'} \int_0^T (\theta_{r',t}(\mu) - \theta_{s,t}(\mu))\,dB_r = 0$. 
Thus using It\^o's formula and integrating we get
\[
\begin{split}
& \mathbb E\int_0^T |\theta_{s',t}(\mu) - \theta_{s,t}(\mu)|^2\,dt \\ 
& = \int_s^{s'} \mathbb E \int_0^T (\theta_{r,t}(\mu) - \theta_{s,t}(\mu)) \left(\nabla_a \tfrac{\delta \mathbf H^0}{\delta m}(\mu_{r,\cdot},\theta_{r,t}(\mu)) - \tfrac{\sigma^2}{2}\nabla_a U(\theta_{r,t}(\mu))\right)\,dt\,dr + T\sigma^2 \int_s^{s'}\,dr\\
& = T\sigma^2 (s'-s) + \int_s^{s'} \mathbb E \int_0^T \theta_{r,t}(\mu)\nabla_a \tfrac{\delta \mathbf H^0}{\delta m}(\mu_{r,\cdot},\theta_{r,t}(\mu)) \,dt\,dr - \int_s^{s'} \mathbb E \int_0^T  \theta_{s,t}(\mu) \nabla_a \tfrac{\delta \mathbf H^0}{\delta m}(\mu_{r,\cdot},\theta_{r,t}(\mu)) \,dt\,dr 	\\
& \qquad - \int_s^{s'} \mathbb E \int_0^T \theta_{r,t}(\mu) \tfrac{\sigma^2}{2}\nabla_a U(\theta_{r,t}(\mu))\,dt\,dr + \int_s^{s'} \mathbb E \int_0^T \theta_{s,t}(\mu) \tfrac{\sigma^2}{2}\nabla_a U(\theta_{r,t}(\mu))\,dt\,dr\,. 	
\end{split}
\]
Same estimates as for the proof of Lemma~\ref{lem sys diss} yield that there is a constant $c > 0$ that's independent of $r\in [s,s']$ such that
$
\mathbb E\int_0^T |\theta_{s',t}(\mu) - \theta_{s,t}(\mu)|^2\,dt \leq T\sigma^2 (s'-s) + \int_s^{s'} c \,dr \leq c (s'-s)\,.
$
This establishes the required continuity property.

{\bf Step 2.}
Let $\lambda\in \mathbb R$ to be fixed later. 
Fix $t\in[0,T]$. 
We will integrate over $t$ in a moment.
It\^o's formula together with Assumption~\ref{ass dissipativity of U} yields
\[
\begin{split}
& d\Big(e^{\lambda s}  |\theta_{s,t}(\mu) - \theta_{s,t}(\mu')|^2  \Big) \\
& =  \,  e^{\lambda s} 
\biggl[ \lambda |\theta_{s,t}(\mu) - \theta_{s,t}(\mu')|	^2  - \sigma^2(\theta_{s,t}(\mu) - \theta_{s,t}(\mu'))  \Big( (\nabla_a U)(\theta_{s,t}(\mu)) -  (\nabla_a U)(\theta_{s,t}(\mu')) \Big) \\
& \qquad \qquad
- 2 (\theta_{s,t}(\mu) - \theta_{s,t}(\mu'))\bigg( (\nabla_a \tfrac{\delta \mathbf H_t^{0}}{\delta m})(\mu_{s,\cdot},\theta_{s,t}(\mu)) - (\nabla_a \tfrac{\delta \mathbf H_t^{0}}{\delta m})(\mu'_{s,\cdot},\theta_{s,t}(\mu))\bigg)\bigg] ds \\ 
& \leq  \,  e^{\lambda s} 
\biggl[ (\lambda-\sigma^2\kappa_U) |\theta_{s,t}(\mu) - \theta_{s,t}(\mu')|	^2   \\
& \qquad \qquad
- 2 (\theta_{s,t}(\mu) - \theta_{s,t}(\mu'))\bigg( (\nabla_a \tfrac{\delta \mathbf H_t^{0}}{\delta m})(\mu_{s,\cdot}, \theta_{s,t}(\mu)) - (\nabla_a \tfrac{\delta \mathbf H_t^{0}}{\delta m})(\mu'_{s,\cdot}, \theta_{s,t}(\mu'))\bigg)\bigg] ds \,.
\end{split}
\]
Notice that the difference has no martingale term since the same process $(B_s)_{s\in I}$ is used regardless of the initial condition since it appears in~\eqref{eq lang in proof} as an additive term.
Integrating over $[0,T]\times \Omega^W$ we get 
\[
\begin{split}
& d\bigg(e^{\lambda s} \, \mathbb E^W\int_0^T  |\theta_{s,t}(\mu) - \theta_{s,t}(\mu')|^2\,dt  \bigg) 
\leq  e^{\lambda s} \mathbb E^W\int_0^T  \bigg[ (\lambda-\sigma^2\kappa_U) |\theta_{s,t}(\mu) - \theta_{s,t}(\mu')|	^2\,dt\\
& -  e^{\lambda s} 2\mathbb E^W \int_0^T \left(\nabla_a \tfrac{\delta \mathbf H_t^{0}}{\delta m}(\mu_{s,\cdot}, \theta_{s,t}(\mu)) - \nabla_a \tfrac{\delta \mathbf H_t^{0}}{\delta m}(\mu'_{s,\cdot}, \theta_{s,t}(\mu'))\right)\Big(\theta_{s,t}(\mu)-\theta_{s,t}(\mu')\Big)\,dt\,ds\,.
\end{split}
\]
Lemma~\ref{lem sys mono} leads us to
\begin{equation}
\label{eq for wellposednessMeanField}
\begin{split}
& d\bigg(e^{\lambda s} \, \mathbb E^W\int_0^T  |\theta_{s,t}(\mu) - \theta_{s,t}(\mu')|^2\,dt  \bigg) \\
& \leq  e^{\lambda s} \mathbb E^W\int_0^T (\lambda-\sigma^2\kappa_U - \kappa_f +L) |\theta_{s,t}(\mu) - \theta_{s,t}(\mu')|	^2\,dt\,ds
+ L e^{\lambda s} \rho_2(\mu_s,\mu'_s)^2\,ds\,.
\end{split}
\end{equation}
Taking $\lambda := \sigma^2\kappa_U + \kappa_F - L \geq 0$, using the definition of 2-Wasserstein distance and integrating over from $0$ to $s$ find that
\[
e^{\lambda s} \rho_2(\Psi(\mu)_s,\Psi(\mu')_s)^2 \leq L \int_0^s  e^{\lambda v} \rho_2(\mu_{v},\mu'_{v})^2\,dv\,,\,\,\,s\geq 0\,.
\]
{\bf Step 3.} 
Thus
\[
\rho_2(\Psi(\mu)_s,\Psi(\mu')_s)^2 \leq L \int_0^s  \rho_2(\mu_{v},\mu'_{v})^2\,dv\,,\,\,\,s\geq 0\,.
\]
So for any $s \geq 0$ we see that
\[
\rho_2(\Psi(\Psi(\mu))_s,\Psi(\Psi(\mu'))_s)^2 
\leq L \int_0^{s}  \rho_2(\Psi(\mu)_{s_1},\Psi(\mu')_{s_1})^2\,d{s_1}
\leq L^2 \int_0^{s} \int_0^{s_1} \rho_2(\mu_{s_2},\mu'_{s_2})^2\,ds_2\,ds_1\,.
\]
Let $\Psi^k$ denote the $k$-th composition of the mapping $\Psi$ with itself. 
Then for any $s\geq 0$, 
\[
\begin{split}
\rho_2(\Psi^k(\mu)_s,\Psi^k(\mu')_s)^2 
& \leq L^k \int_0^{s} \int_0^{s_1} \cdots\int_0^{k-1} \rho_2(\mu_{s_k},\mu'_{s_k})^2\,ds_k\ldots ds_2\,ds_1\\
& \leq L^k \sup_{0\leq v\leq s}\rho_2(\mu_v,\mu'_v)^2 \int_0^s\int_0^{s_1} \cdots\int_0^{k-1} ds_k\ldots ds_2\,ds_1
\,.	
\end{split}
\]
Hence
\[
\rho_2(\Psi^k(\mu)_s,\Psi^k(\mu')_s)^2 \leq L^k \frac{s^k}{k!}\sup_{0\leq v\leq s}\rho_2(\mu_v,\mu'_v)^2 \,,\,\,\,s\geq 0\,.
\]
So 
\[
\sup_{0\leq v\leq s} \rho_2(\Psi^k(\mu)_v,\Psi^k(\mu')_v) \leq \left(\frac{L^k s^k}{k!}\right)^{1/2}\sup_{0\leq v\leq s}\rho_2(\mu_v,\mu'_v) \,,\,\,\,s\geq 0\,.
\]
Thus for any $s\in I$ there is $k\geq 1$ such that $\Psi^k$ is a contraction. 

{\bf Step 4.} Since $\Psi^k$ is a contraction we get existence and uniqueness of solution to the system from Banach's fixed point theorem. 
The estimate~\eqref{eq:UnifBound} follows from Lemma~\ref{lem apriori}. 
\end{proof}

Recall that $P_{s,t} \mu^0 := \mathcal L\big( \theta_{s,t} | \mathcal F_t^W \big)$,
where $(\theta_{s,t})_{s\geq 0, t\in [0,T]}$ is the unique solution (cf. Lemma~\ref{thm:WellposednessMeanField}) to the  system~\eqref{eq mfsgd}-\eqref{eq mfsgd 2} started with the initial condition $(\theta^0_t)_{t\in [0,T]}$ such that $\mathcal L(\theta^0_t(\omega^W)) = \mathbb P^B \circ \theta^0_t(\omega^W)^{-1} = \mu^0(\omega^W)$ for a.e. $\omega^W \in \Omega^W$. 
Let $P_s \mu^0 := (P_{s,t} \mu^0)_{t\in [0,T]}$ and note that $P_s \mu^0 \in \mathcal V_{q}^W$ for any $s\geq 0$. 
Moreover note that due to uniqueness of solutions to~\eqref{eq mfsgd}-\eqref{eq mfsgd 2} we have $P_{s+s'} \mu^0 = P_s\big(P_{s'}\mu^0\big)$.

\begin{lemma}
\label{lemma exp conv est}
Let Assumptions~\ref{ass dissipativity of U}, \ref{ass dissipativity of F}, \ref{ass for bsde estimates} and~\ref{ass for gradient system} hold.
If $\lambda = 2L - \sigma^2\kappa_U - \kappa_f \geq 0$ and if $\mu^0, \bar \mu^0 \in \mathcal V_{q}^W$, then for all $s\geq 0$ we have
\begin{equation}
\label{eq inv meas conv rate}
\rho_2(P_s \mu^0, P_s \bar \mu^0)\leq e^{-\frac12\lambda s}  \rho_2(\mu^0, \bar \mu^0)\,.
\end{equation}
\end{lemma}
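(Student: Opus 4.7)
The plan is to run a synchronous coupling of the mean-field systems starting from $\mu^0$ and $\bar\mu^0$, obtain an exponential contraction in the ``linearised" norm $s\mapsto \int_0^T \mathbb E|\theta_{s,t}-\bar\theta_{s,t}|^q\,dt$, and then deduce the estimate on $\rho_q$ by optimising over the coupling of initial data.

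\textbf{Step 1 (coupled SDEs).} Pick $\mathcal F_t^W$-measurable initial random variables $\theta^0_t,\bar\theta^0_t$ on $\Omega^W\times\Omega^B$ such that $\mathcal L(\theta^0_t\mid\mathcal F_t^W)=\mu^0_t$, $\mathcal L(\bar\theta^0_t\mid\mathcal F_t^W)=\bar\mu^0_t$ and, conditionally on $\mathcal F_t^W$, the pair $(\theta^0_t,\bar\theta^0_t)$ is an optimal $\mathcal W_q$-coupling; a measurable selection in $(t,\omega^W)$ exists because the pair of kernels is jointly measurable. Let $(\theta_{s,t})_{s\ge0}$ and $(\bar\theta_{s,t})_{s\ge 0}$ solve \eqref{eq mfsgd}--\eqref{eq mfsgd 2} with these initial conditions driven by the \emph{same} Brownian motion $B$; by Lemma~\ref{thm:WellposednessMeanField} both solutions exist and are unique, and $\nu_{s,t}:=\mathcal L(\theta_{s,t}\mid\mathcal F_t^W)=P_{s,t}\mu^0$, $\bar\nu_{s,t}:=\mathcal L(\bar\theta_{s,t}\mid\mathcal F_t^W)=P_{s,t}\bar\mu^0$.

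\textbf{Step 2 (It\^o's formula).} For each fixed $t\in[0,T]$ the difference $\theta_{s,t}-\bar\theta_{s,t}$ has no martingale part (same $B$), so It\^o's formula on $|\theta_{s,t}-\bar\theta_{s,t}|^q$ yields
\[
\tfrac{d}{ds}|\theta_{s,t}-\bar\theta_{s,t}|^q
= -q|\theta_{s,t}-\bar\theta_{s,t}|^{q-2}(\theta_{s,t}-\bar\theta_{s,t})\cdot D_s,
\]
where $D_s:=\tfrac{\sigma^2}2\bigl[\nabla_aU(\theta_{s,t})-\nabla_aU(\bar\theta_{s,t})\bigr]+\bigl[(\nabla_a\tfrac{\delta\mathbf H_t^{0}}{\delta m})(\theta_{s,t},\nu_{s,\cdot})-(\nabla_a\tfrac{\delta\mathbf H_t^{0}}{\delta m})(\bar\theta_{s,t},\bar\nu_{s,\cdot})\bigr]$. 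Combining the one-sided Lipschitz estimate for $\nabla_a U$ from Assumption~\ref{ass exist and uniq} with Assumption~\ref{ass monotonicity of DmH0} gives
\[
(\theta_{s,t}-\bar\theta_{s,t})\cdot D_s \ge \tfrac12(\sigma^2\kappa+\eta_1)|\theta_{s,t}-\bar\theta_{s,t}|^2 -\tfrac{\eta_2}2\,\mathcal E_t(\nu_{s,\cdot},\bar\nu_{s,\cdot})^2.
\]

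\textbf{Step 3 (close the Gronwall loop).} Apply Young's inequality $|\theta-\bar\theta|^{q-2}\mathcal E_t^2\le \tfrac{q-2}q|\theta-\bar\theta|^q+\tfrac2q\mathcal E_t^q$ (trivial for $q=2$), integrate over $t\in[0,T]$, take expectations, and use the assumption $\mathbb E\int_0^T\mathcal E_t(\nu_{s,\cdot},\bar\nu_{s,\cdot})^q\,dt\le\rho_q(\nu_{s,\cdot},\bar\nu_{s,\cdot})^q$ together with the fundamental synchronous-coupling bound
\[
\rho_q(P_s\mu^0,P_s\bar\mu^0)^q=\mathbb E^W\!\!\int_0^T\!\!\mathcal W_q(P_{s,t}\mu^0,P_{s,t}\bar\mu^0)^q\,dt\le \int_0^T\mathbb E|\theta_{s,t}-\bar\theta_{s,t}|^q\,dt.
\]
Setting $\Delta(s):=\int_0^T\mathbb E|\theta_{s,t}-\bar\theta_{s,t}|^q\,dt$ and combining the bounds, the $|\theta-\bar\theta|^q$ terms collect with coefficient $\tfrac q2(\sigma^2\kappa+\eta_1)-\tfrac{q-2}2\eta_2-\eta_2=\lambda$, so $\tfrac{d}{ds}\Delta(s)\le -\lambda\Delta(s)$, hence $\Delta(s)\le e^{-\lambda s}\Delta(0)$.

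\textbf{Step 4 (conclude).} By the optimal choice of initial coupling, $\Delta(0)=\rho_q(\mu^0,\bar\mu^0)^q$. Combining with the inequality from Step~3 and taking $q$-th roots gives \eqref{eq inv meas conv rate}. The main subtlety is ensuring that the initial optimal coupling can be chosen in an $\mathcal F_t^W\otimes\mathcal F^B$-jointly measurable way so that both $\theta^0_t$ and $\bar\theta^0_t$ are admissible for the system and the equality $\int_0^T\mathbb E|\theta^0_t-\bar\theta^0_t|^q\,dt=\rho_q(\mu^0,\bar\mu^0)^q$ holds; everything else is a direct application of monotonicity, Young's inequality and Gronwall's lemma.
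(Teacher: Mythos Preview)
Your proof is correct and follows essentially the same route as the paper: synchronous coupling of the two mean-field SDEs, It\^o's formula on $|\theta_{s,t}-\bar\theta_{s,t}|^q$, the monotonicity bound from Assumptions~\ref{ass exist and uniq} and~\ref{ass monotonicity of DmH0}, Young's inequality to absorb the $\mathcal E_t$ term, and Gronwall. If anything, you are more careful than the paper in Step~1 and Step~4 about selecting an optimal $\mathcal W_q$-coupling of the initial data so that $\Delta(0)=\rho_q(\mu^0,\bar\mu^0)^q$ holds exactly; the paper leaves this implicit.
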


\begin{proof}
Let $\varepsilon > 0$ be fixed. 
We claim that there are $(\theta^0_t)_{t\in [0,T]}$ and $(\bar \theta^0_t)_{t\in [0,T]}$ such that $\mathcal L(\theta^0_t(\omega^W)) = \mu^0(\omega^W)$
and 
$(\bar \theta^0_t)_{t\in [0,T]}$
such that $\mathcal L(\bar \theta^0_t(\omega^W)) = \bar \mu^0(\omega^W)$
for a.e. $\omega^W \in \Omega^W$ and such that $\mathbb E^W\int_0^T  |\theta^0_{t} - \bar\theta^0_{t}|^2\,dt - \varepsilon \leq \rho_2(\mu^0, \bar \mu^0)$. 
This follows from the definition of Wasserstein distance and from the definition of $\rho_2$, see~\eqref{eq def of rho}.
Let $(\theta_{s,\cdot})_{s\geq 0}$ and $( \bar\theta_{s,\cdot})_{s\geq 0}$
denote two solutions to~\eqref{eq mfsgd}-\eqref{eq mfsgd 2} with initial conditions 
$(\theta^0_t)_{t\in [0,T]}$ and $(\bar \theta^0_t)_{t\in [0,T]}$. 
For $\lambda = 2L - \sigma^2\kappa_U - \kappa_f \geq 0$, the same computation as in the  Step 2 of the proof of Lemma~\ref{thm:WellposednessMeanField}, see~\eqref{eq for wellposednessMeanField}, gives for all $s\geq 0$ that,
\begin{equation*}
\begin{split}
& e^{\lambda s} \, \mathbb E^W\int_0^T  |\theta_{s,t} - \bar\theta_{s,t}|^2\,dt  \leq  \mathbb E^W\int_0^T  |\theta_{0,t} - \bar\theta_{0,t}|^2\,dt \\
&  +   \int_0^s e^{\lambda r} \mathbb E^W\int_0^T (\lambda-\sigma^2\kappa_U - \kappa_f +L) |\theta_{r,t} - \bar\theta_{r,t}|	^2\,dt\,dr
+ L \int_0^s e^{\lambda r} \rho_2(P_r\mu^0,P_r \bar \mu^0)^2\,dr\,.
\end{split}
\end{equation*}
From the properties of Wasserstein norm and the fact that $\lambda = 2L - \sigma^2\kappa_U - \kappa_f \geq 0$ we get for all $s\geq 0$ that
\[
 \mathbb E^W\int_0^T  |\theta_{s,t} - \bar\theta_{s,t}|^2\,dt  \leq  e^{-\lambda s} \mathbb E^W\int_0^T  |\theta_{0,t} - \bar\theta_{0,t}|^2\,dt\,.
\]
Hence 
\[
\rho_2(P_s \mu^0, P_s \bar \mu^0)^2 \leq e^{-\lambda s}\varepsilon + e^{-\lambda s}\rho_2(\mu^0, \bar \mu^0)^2\,,\,\,\, s\geq 0\,.
\]
Since $\varepsilon >0$ was arbitrary we get the desired conclusion. 
\end{proof}

\begin{proof}[Proof of Theorem~\ref{thm conv to inv meas rate}]
We follow a similar argument as in the proof of \cite[Corollary 1.8]{majka2017coupling} or in \cite[Section 3]{komorowski2012central}.
Choose $s_0 > 0$ such that $e^{-\frac1q \lambda s_0} < 1$. 
Then $P_{s_0} : \mathcal V_q^W \to \mathcal V_q^W$ 
is a contraction due to Lemma~\ref{lemma exp conv est}. 
By Banach's fixed point theorem and Lemma~\ref{lemma L2overmeasurescomplete} there is a (unique) $\tilde \mu \in \mathcal V_q^W$ such that $P_{s_0}\tilde \mu = \tilde \mu$.
Note that $\tilde \mu$ depends on the above choice of $s_0$.

Let $\mu^\ast := \int_0^{s_0} P_s\tilde \mu \,ds$.
Now take an arbitrary $r \leq s_0$. 
Then
\[
P_r \mu^\ast = P_r \int_0^{s_0} P_s\tilde \mu\,ds = \int_0^{s_0} P_{r+s} \tilde \mu \,ds 	= \int_r^{r+s_0} P_s\tilde \mu\,ds 
= \int_r^{s_0} P_s\tilde \mu\,ds + \int_{s_0}^{r+s_0} P_s\tilde \mu\,ds\,. 
\]
Since $\tilde \mu = P_{s_0}\tilde \mu$ we get that
\[
P_r \mu^\ast = \int_r^{s_0} P_s\tilde \mu\,ds + \int_{s_0}^{r+s_0} P_{s+s_0}\tilde \mu\,ds = \int_r^{s_0} P_s\tilde \mu\,ds + \int_{0}^{r} P_{s}\tilde \mu\,ds = \mu^\ast\,.
\]
For $r > s_0$ choose $k\in \mathbb N$ so that $ks_0 < r$ and $(k+1)s_0 \geq r$. 
Then $P_r \mu^\ast = (P_{s_0})^k P_{(r-ks_0)} \mu^\ast = \mu^\ast$.
To see that $\mu^\ast$ is unique consider $\nu^\ast \neq \mu^\ast$ such that $P_r \nu^\ast = \nu^\ast$ for any $r\geq 0$. Then from Lemma~\ref{lemma exp conv est} we have, for any $r>s_0$, that
\[
\rho_q(\mu^\ast, \nu^\ast) = \rho_q(P_r \mu^\ast, P_r \nu^\ast) \leq e^{-\frac1q\lambda r} \rho_q(\mu^\ast,\nu^\ast)
\] 
which is a contradiction as $e^{-\frac1q \lambda r} < 1$.
From Lemma~\ref{lemma exp conv est} we immediately get~\eqref{eq exp conv to inv meas}.
\end{proof}

\section{BSDE estimates}

Let $\mathbb E_t^W[\cdot] := \mathbb E[\cdot | \mathcal F_t^W]$.
Let $\|X\|_\infty := \text{ess sup}_\omega^W |X(\omega^W)|$ 
and $\|Z\|_{\mathbb H^\infty} := \text{ess sup}_{t,\omega^W}|Z_t(\omega^W)|$ for any r.v. $X$ and for any progressively measurable process $Z$ on $(\Omega^W, \mathcal F^W, (\mathcal F^W_t)_{t\in [0,T]}, \mathbb P^W)$. 
For a uniformly integrable martingale $M$ such that $M_0 = 0$ let $\|M\|_{\text{BMO}} := \sup_\tau \|(\mathbb E_\tau^W[\langle M\rangle_T - \langle M \rangle_\tau ] )^{1/2} \|_\infty$,
where the supremum is taken over all $(\mathcal F_t^W)_{t\in [0,T]}$ stopping times.
For a process $Z$ adapted to $(\mathcal F_t^W)_{t\in [0,T]}$ let $(Z\cdot W)_{t'} := \int_0^{t'} Z_t\,dW_t$.
We start by recalling a well known lemma about controlled SDEs with Lipschitz coefficients.
\begin{lemma}
\label{lemma sde stability}
Assume there exists $K>0$ such that for all $t\in[0,T]$, for all $x,x' \in \mathbb R^d$ and for all $m,m'\in \mathcal P_2(\mathbb R^p)$ we have
\[
|\Phi_t(x,m) - \Phi_t(x',m')| + |\Gamma_t(x,m) - \Gamma_t(x',m')| \leq K |x-x'| + \mathcal W_2(m,m')\,.
\]
Then there is $C=C_{T,K}$ such that for any $\mu,\mu' \in \mathcal V_2^W$ we have 
\[
\mathbb E^W \sup_{0\leq t \leq T} |X_t(\mu)-X_t(\mu')|^2 \leq C \rho_2(\mu,\mu')^2\,. 
\]
\end{lemma}

The following lemma is proved in~\cite{kerimkulov2021regularized} which uses results proved in~\cite{harter2019stability}.
It is the first key stability result needed to show that the gradient flow system~\eqref{eq mfsgd}-\eqref{eq mfsgd 2} has a solution and that it converges to invariant measure.

\begin{lemma}
\label{bsde bounds}
Assume there exists $K>0$ such that for all $x\in\mathbb R^d$, all $m\in \mathcal P_2(\mathbb R^p)$ and all $t\in [0,T]$ we have
$
|\nabla_x \Phi_t(x,m)| + \sum_{i=1}^d\sum_{j=1}^{d'}|\nabla_x \Gamma_t^{ij}(x,m)| + |\nabla_x g(x)| + |\nabla_x F_t(x,m)|\leq K\,.
$
Then 
\[
\sup_{\mu \in \mathcal V_q^W} \|Y(\mu)\|_{\mathbb H^\infty} < \infty \,\,\,\text{and}\,\,\,
\sup_{\mu \in \mathcal V_q^W}\|Z(\mu)\cdot W\|_{\text{BMO}} < \infty\,.
\]
\end{lemma}

To proceed we need to recall the following deep result about BMO martingales that is proved in~\cite[Theorem 2.5]{Kazamaki}.
\begin{theorem}
\label{thm fefferman}
If $M\in \text{BMO}(\Omega^W)$ and $N$ is an $(\mathcal F_t^W)_{t\in[0,T]}$-martingale such that $\sup_{0\leq t\leq T}|N_t| \in L_1(\Omega^W)$, then
$\mathbb E^W \left[\int_0^T |d\langle M, N\rangle_t| \right] \leq \sqrt{2} \|M\|_{\text{BMO}}\mathbb E\left[\langle N \rangle_T^{1/2}\right]$.	
\end{theorem}

We will mainly use Theorem~\ref{thm fefferman} to give us the following corollary. 

\begin{corollary}
\label{corollary to fefferman}
Let $\varphi, \psi$ be progressively measurable such that $\varphi, \psi \in L^2((0,T)\times \Omega^W; \mathbb R^k)$ and $\varphi \cdot W\in \text{BMO}(\Omega^W)$. 
Then $\mathbb E^W \int_0^T |\varphi_t||\psi_t|\,dt \leq \sqrt{2}\|\varphi \cdot W\|_{\text{BMO}} \left(\mathbb E^W\int_0^T |\psi_t|^2\,dt\right)^{1/2}$.  	
\end{corollary}
\begin{proof}
Observe that $\mathbb E^W \int_0^T |\varphi_t||\psi_t|\,dt = \mathbb E^W\int_0^T |d\langle |\varphi| \cdot W, |\psi| \cdot W\rangle_t|$. 
Using Theorem~\ref{thm fefferman} and H\"older's inequality we get
\[
\begin{split}
& \mathbb E^W\int_0^T |d\langle |\varphi| \cdot W, |\psi| \cdot W\rangle_t| \\
& \leq \sqrt{2} \|\varphi\cdot W\|_{\text{BMO}} \mathbb E^W \bigg[\bigg(\int_0^T |\psi_t|^2\,dt\bigg)^{1/2}\bigg] \leq \sqrt{2}\|\varphi \cdot W\|_{\text{BMO}} \bigg(\mathbb E^W\int_0^T |\psi_t|^2\,dt\bigg)^{1/2}\,.	
\end{split}
\]
\end{proof}

We now will now state and prove the second key stability result on the backward equation that is needed to show that the gradient flow system~\eqref{eq mfsgd}-\eqref{eq mfsgd 2} has a solution and that it converges to invariant measure.

\begin{lemma}
\label{lemma:bsde-diff-est}
Assume that there is $K>0$ such that for all $t\in[0,T$], for all $x,x' \in \mathbb R^d$ and for all $m,m' \in \mathcal P_2(\mathbb R^p)$ we have:
\begin{enumerate}[i)]
\item 
$
|\Phi_t(x,m) - \Phi_t(x',m')| + |\Gamma_t(x,m) - \Gamma_t(x',m')| \leq K |x-x'| + \mathcal W_2(m,m')\,.
$
\item 
$
|\nabla_x \Phi_t(x,m)| + \sum_{i=1}^d\sum_{j=1}^{d'}|\nabla_x \Gamma_t^{ij}(x,m)| + |\nabla_x g(x)| + |\nabla_x F_t(x,m)|\leq K\,.
$
\item
$
|\nabla_x \Phi_t(x,m) - \nabla_x \Phi_t(x',m')| \leq K |x-x'| + \mathcal W_2(m,m')$ and $|\nabla_x \Gamma_t(x,m) - \nabla_x \Gamma_t(x',m')| \leq K |x-x'|\,.
$
\item 
$
|\nabla_x F_t(x,m) - \nabla_x F_t(x',m')| \leq K |x-x'| + \mathcal W_2(m,m')
\,\,\,\text{and}\,\,\,
|\nabla_x g(x) - \nabla_x g(x')| \leq K|x-x'|\,.
$
\end{enumerate}	
Then there is a constant $C=C_{d,d',K,T}>0$ such that if $\mu,\mu' \in \mathcal V_2^W$ then 
\[
\mathbb E^W \sup_{0\leq t\leq T} |Y_t(\mu) - Y_t(\mu')|^{2} + \mathbb E^W \int_0^T |Z_t(\mu) - Z_t(\mu')|^2\,dt\leq C \rho_{2}(\mu,\mu')^{2}\,.
\] 
\end{lemma}

\begin{proof}
Given a constant $\beta\geq 1$ to be fixed later, we compute by It\^o's formula that:
\begin{equation}
\label{eq diff of Y and Z ito}	
\begin{split}
& e^{\beta t'}  |Y_{t'}(\mu) - Y_{t'}(\mu')|^2  =  e^{\beta T}|\left(\nabla_x g\right)(X_T(\pi))-\left(\nabla_x g\right)(X_T(\pi'))|^2 \\
& + 2\int_{t'}^T e^{\beta t} \big(Y_t(\mu)-Y_t(\mu')\big)^\top \big(Z_t(\mu)-Z_t(\mu')\big)\,dW_t \\ 
& +\int_{t'}^T e^{\beta t}\bigg[2\big(Y_t(\mu)-Y_t(\mu')\big)\Big(-\left(\nabla_x H^{0}_t\right)(X_t(\mu),Y_t(\mu),Z_t(\mu), \mu_t)+\left(\nabla_x H^{0}_t\right)(X_t(\mu'), Y_t(\mu'), Z_t(\mu'), \mu'_t)\Big)\\
&\qquad\qquad-|Z_t(\mu)-Z_t(\mu')|^2-\beta|Y_t(\mu)-Y_t(\mu')|^{q'}\bigg]\,dt\,.
\end{split}
\end{equation}
By taking expectation and using the fact that $Z(\mu),Z(\mu')\in L^2((0,T)\times \Omega^W)$ and Lemma~\ref{bsde bounds} we get
\begin{equation}\label{eq diff of Y and Z}
\begin{split}
&   \mathbb E^W\int_{0}^T e^{\beta t} \Big(\beta |Y_t(\mu)-Y_t(\mu')|^2 + |Z_t(\mu)-Z_t(\mu')|^2\Big)\,dt 
\leq  e^{\beta T} \mathbb E^W|\left(\nabla_x g\right)(X_T(\mu))-\left(\nabla_x g\right)(X_T(\mu'))|^2  \\
&  + \mathbb E^W\int_0^T 2e^{\beta t}\big(Y_t(\mu)-Y_t(\mu')\big)\bigg(-\left(\nabla_x H^{0}_t\right)(X_t(\mu),Y_t(\mu),Z_t(\mu), \mu_t) \\
&  \qquad \qquad \qquad \qquad \qquad \qquad \qquad \qquad \qquad  
  +\left(\nabla_x H^{0}_t\right)(X_t(\mu'), Y_t(\mu'), Z_t(\mu'), \mu'_t)\bigg)\,dt\,.
\end{split}
\end{equation}
We now wish to decompose the term involving the Hamiltonian as follows
\begin{equation*}
\begin{split}
- &\left(\nabla_x H^{0}_t\right)(X_t(\mu), Y_t(\mu), Z_t(\mu), \mu_t)+\left(\nabla_x H^{0}_t\right)(X_t(\mu'), Y_t(\mu'), Z_t(\mu'), \mu'_t)\\
= &-\left(\nabla_x \Phi_t\right)(X_t(\mu), \mu_t)\Big(Y_t(\mu)-Y_t(\mu')\Big)+Y_t(\mu')\Big(\left(\nabla_x \Phi_t\right)(X_t(\mu'), \mu'_t)-\left(\nabla_x \Phi_t\right)(X_t(\mu), \mu_t)\Big)\\
&-\left(\nabla_x \Gamma_t\right)(X_t(\mu), \mu_t)\Big(Z_t(\mu)-Z_t(\mu')\Big)+Z_t(\mu')\Big(\left(\nabla_x \Gamma_t\right)(X_t(\mu'), \mu'_t)-\left(\nabla_x \Gamma_t\right)(X_t(\mu), \mu_t)\Big)\\
&-\left(\nabla_x F_t\right)(X_t(\mu), \mu_t)+\left(\nabla_x F_t\right)(X_t(\mu'), \mu'_t)\,.
\end{split}
\end{equation*}
Carrying this through we obtain
\[
\begin{split}
&\mathbb E^W\int_0^T e^{\beta t}\big(Y_t(\mu)-Y_t(\mu')\big)\Big(\left(\nabla_x H^{0}_t\right)(X_t(\mu'), Y_t(\mu'), Z_t(\mu'), \mu'_t)-\left(\nabla_x H^{0}_t\right)(X_t(\mu),Y_t(\mu),Z_t(\mu), \mu_t)\Big)\,dt\\
& \leq I_1 + I_2 + I_3 + I_4 + I_5\,,
\end{split}
\]
where, estimating the first term using assumption ii),
\[
\begin{split}
I_1 & := \mathbb E^W\int_0^T e^{\beta t}|Y_t(\mu)-Y_t(\mu')|^2|\nabla_x \Phi_t(X_t(\mu),\mu_t)|\,dt \leq K \mathbb E^W\int_0^T e^{\beta t}|Y_t(\mu)-Y_t(\mu')|^2\,dt\,,\\
I_2 & := \mathbb E^W\int_0^T e^{\beta t}|Y_t(\mu)-Y_t(\mu')||Z_t(\mu)-Z_t(\mu')||\nabla_x \Gamma_t(X_t(\mu),\mu_t)|\,dt\,,\\
I_3 & := \mathbb E^W\int_0^T e^{\beta t}|Y_t(\mu)-Y_t(\mu')||\nabla_x F_t(X_t(\mu),\mu_t) - \nabla_x F_t(X_t(\mu'),\mu'_t)|\,dt\,,\\
I_4 &:= \mathbb E^W\int_0^T e^{\beta t}|Y_t(\mu)-Y_t(\mu')||Z_t(\mu')||\nabla_x \Gamma_t(X_t(\mu'),\mu'_t) - \nabla_x \Gamma_t(X_t(\mu),\mu_t)|\,dt\,,\\
I_5 &:= \mathbb E^W\int_0^T e^{\beta t}|Y_t(\mu)-Y_t(\mu')||Y_t(\mu')||\nabla_x \Phi_t(X_t(\mu'),\mu'_t) - \nabla_x \Phi_t(X_t(\mu),\mu_t)|\,dt\,.
\end{split}
\]
Using assumption ii) and Young's inequality
\[
I_2 \leq 4K\mathbb E^W\int_0^T e^{\beta t} |Y_t(\mu)-Y_t(\mu')|^2\,dt + \frac14\mathbb E^W\int_0^T e^{\beta t} |Z_t(\mu)-Z_t(\mu')|^2\,dt\,.
\]
Next we note that due to assumption iii) and iv) and due to Lemma~\ref{lemma sde stability} we obtain, with $G$ standing in for any of $\Phi$, $\Gamma$ or $F$, that
\begin{equation}
\label{eq bsde stability proof 1}
\begin{split}
& \mathbb E^W\int_0^T e^{\beta t} |\nabla_x G_t(X_t(\mu),\mu_t) - \nabla_x G_t(X_t(\mu'), \mu'_t)|^2\,dt \\	
& \leq C_{K} \mathbb E^W\int_0^T e^{\beta t} |X_t(\mu) - X_t(\mu')|^2\,dt + C_{K} \mathbb E^W\int_0^T e^{\beta t} \mathcal W_{2}(\mu_t,\mu'_t)^2\,dt \leq C \rho_2(\mu,\mu')^2\,. 
\end{split}
\end{equation}
With this in mind and using Young's inequality we get 
\[
\begin{split}
I_3 & \leq \frac12\mathbb E^W\int_0^T e^{\beta t}|Y_t(\mu)-Y_t(\mu')|^2\,dt 
+ \frac12 \mathbb E^W\int_0^T e^{\beta t}|\nabla_x F_t(X_t(\mu),\mu_t) - \nabla_x F_t(X_t(\mu'),\mu'_t)|^2\,dt\\
& \leq \frac12\mathbb E^W\int_0^T e^{\beta t}|Y_t(\mu)-Y_t(\mu')|^2\,dt + C e^{\beta T} \rho_2(\mu,\mu')^2\,.
\end{split}
\]
To estimate the next term we will employ Theorem~\ref{thm fefferman} as follows:
\[
\begin{split}
I_4 & = \mathbb E^W\int_{0}^T \bigg| d\Big\langle |Z(\mu')|\cdot W, e^{\beta t}|Y_t(\mu)-Y_t(\mu')||\nabla_x \Gamma_t(X_t(\mu'),\mu'_t) - \nabla_x \Gamma_t(X_t(\mu),\mu_t)| \cdot W \Big\rangle_t\bigg|\\
& \leq \sqrt{2}\|Z(\mu')\cdot W\|_{\text{BMO}} \mathbb E^W\Big[\Big\langle e^{\beta t}|Y_t(\mu)-Y_t(\mu')||\nabla_x \Gamma_t(X_t(\mu'),\mu'_t) - \nabla_x \Gamma_t(X_t(\mu),\mu_t)| \cdot W  \Big\rangle_T^{1/2} \Big]	\,.
\end{split}
\]
Lemma~\ref{bsde bounds} yields
\[
\begin{split}
I_4 & \leq C \mathbb E^W\bigg[  \bigg(\int_0^T e^{2\beta t} |Y_t(\mu)-Y_t(\mu')|^2|\nabla_x \Gamma_t(X_t(\mu'),\mu'_t) - \nabla_x \Gamma_t(X_t(\mu),\mu_t)|^2\,dt\bigg)^{1/2} \bigg]\\
& \leq C \mathbb E^W\bigg[ \sup_{0\leq t \leq T} e^{\sqrt{\beta t}}|\nabla_x \Gamma_t(X_t(\mu'),\mu'_t) - \nabla_x \Gamma_t(X_t(\mu),\mu_t)|  \bigg(\int_0^T e^{\beta t} |Y_t(\mu)-Y_t(\mu')|^2\,dt\bigg)^{1/2} \bigg]\,.
\end{split}
\]
Thus with Young's inequality, assumption iii) and Lemma~\ref{lemma sde stability} we have
\[
I_4 \leq  \mathbb E^W\int_0^Te^{\beta t}|Y_t(\mu)-Y_t(\mu')|^2\,dt + C\rho_2(\mu,\mu')^2\,.
\]
To estimate the final term we use Lemma~\ref{bsde bounds}, Young's and H\"older's inequalities to see that 
\[
I_5 \leq  \mathbb E^W\int_0^T |Y_t(\mu)-Y_t(\mu')|^2 \,dt + C e^{2\beta T}\rho_2(\mu,\mu')^2\,.
\]
Note that assumption iv) and Lemma~\ref{lemma sde stability} allow us to see that
\[
e^{\beta T} \mathbb E^W|\left(\nabla_x g\right)(X_T(\pi))-\left(\nabla_x g\right)(X_T(\pi'))|^2 
\leq e^{\beta T} C\rho_2(\mu,\mu')^2\,.	
\]
Thus~\eqref{eq diff of Y and Z} and  can be estimated as 
\[
\begin{split}
& \mathbb E^W\int_{0}^T e^{\beta t} \Big(\beta |Y_t(\mu)-Y_t(\mu')|^2 + |Z_t(\mu)-Z_t(\mu')|^2\Big)\,dt  \\
& \leq (10K+4)\mathbb E^W\int_0^T e^{\beta t} |Y_t(\mu)-Y_t(\mu')|^2\,dt + \frac12\mathbb E^W\int_0^T e^{\beta t} |Z_t(\mu)-Z_t(\mu')|^2\,dt 
+ C e^{2\beta T}\rho_{2}(\mu,\mu')^{2}\,.
\end{split}
\]
Let us take $\beta = 10K + 5$.
Then 
\begin{equation}
\label{eq bsde stability proof 2}
\begin{split}
& \mathbb E^W\int_{0}^T e^{\beta t} \Big(|Y_t(\mu)-Y_t(\mu')|^2 + \tfrac12|Z_t(\mu)-Z_t(\mu')|^2\Big)\,dt \leq  C e^{2\beta T}\rho_{2}(\mu,\mu')^{2}\,.
\end{split}
\end{equation}
Now let us return to~\eqref{eq diff of Y and Z ito}, take $\beta=0$, supremum over $t\in [0,T]$ and expectation:
\[
\begin{split}
& \mathbb E^W \sup_{0\leq t\leq T} |Y_t(\mu) - Y_t(\mu')|^2 
\leq e^{\beta T} \mathbb E^W |\left(\nabla_x g\right)(X_T(\pi))-\left(\nabla_x g\right)(X_T(\pi'))|^2 \\
& + 2\mathbb E^W \bigg[\sup_{0\leq t\leq T} 	\int_t^T \big(Y_{t'}(\mu)-Y_{t'}(\mu')\big)^\top \big(Z_{t'}(\mu)-Z_{t'}(\mu')\big)\,dW_{t'} \bigg] \\
& + 2\mathbb E^W \int_0^T |Y_t(\mu)-Y_t(\mu')|\Big|\left(\nabla_x H^{0}_t\right)(X_t(\mu),Y_t(\mu),Z_t(\mu), \mu_t)-\left(\nabla_x H^{0}_t\right)(X_t(\mu'), Y_t(\mu'), Z_t(\mu'), \mu'_t)\Big|\,dt\,.
\end{split}
\]
Using the estimates we obtained for $I_1$,\ldots,$I_5$ together with Lemma~\ref{lemma sde stability} and~\eqref{eq bsde stability proof 2} thus yields
\[
\begin{split}
& \mathbb E^W \sup_{0\leq t\leq T} |Y_t(\mu) - Y_t(\mu')|^2 
\leq  2\mathbb E^W \bigg[\sup_{0\leq t\leq T} 	\int_t^T \big(Y_{t'}(\mu)-Y_{t'}(\mu')\big)^\top \big(Z_{t'}(\mu)-Z_{t'}(\mu')\big)\,dW_{t'} \bigg]  + C \rho_2(\mu,\mu')^2\,.
\end{split}
\]
Applying the inequality of Burkholder--Davis--Gundy and finally Young's inequality we obtain that
\[
\begin{split}
&\mathbb E^W \bigg[\sup_{0\leq t\leq T} 	\int_t^T \big(Y_{t'}(\mu)-Y_{t'}(\mu')\big)^\top \big(Z_{t'}(\mu)-Z_{t'}(\mu')\big)\,dW_{t'} \bigg]\\
&\leq C\mathbb E^W \bigg[\bigg(\int_0^T |Y_{t'}(\mu)-Y_{t'}(\mu')|^2|Z_{t'}(\mu)-Z_{t'}(\mu')|^2 \,dt\bigg)^{1/2}\bigg]\\
&\leq C\mathbb E^W \bigg[\sup_{0\leq t' \leq T}|Y_{t'}(\mu)-Y_{t'}(\mu')|\bigg(\int_0^T |Z_{t'}(\mu)-Z_{t'}(\mu')|^2 \,dt\bigg)^{1/2}\bigg]\\
&\leq C\gamma\mathbb E^W \sup_{0\leq t' \leq T}|Y_{t'}(\mu)-Y_{t'}(\mu')|^2+CC_\gamma\mathbb E^W\int_0^T |Z_{t'}(\mu)-Z_{t'}(\mu')|^2 \,dt\,.\\
\end{split}
\]
Hence, taking $\gamma > 0$ sufficiently small and recalling~\eqref{eq bsde stability proof 2} we get
\[
\begin{split}
& \mathbb E^W \sup_{0\leq t\leq T} |Y_t(\mu) - Y_t(\mu')|^2 
\leq C \rho_2(\mu,\mu')^2\,.
\end{split}
\]
This concludes the proof.
\end{proof}

\section*{Acknowledgements}
We would like to thank the anonymous referees and  William Hammersley (Université Côte d'Azur) for their insightful comments which undoubtedly helped us to improve the paper.

\newif\ifprintappendix
\printappendixtrue

\ifprintappendix

\appendix

\section{Measure derivatives}

We first define flat derivative on $\mathcal P_2(\mathbb R^p)$. See e.g.~\cite[Section 5.4.1]{carmona2018probabilistic} for more details.

\begin{definition}
\label{def:flatDerivative}
A functional $U:\mathcal P_2(\mathbb R^p) \to \mathbb R$ is said to admit a linear derivative
if there is a (continuous on $\mathcal P_2(\mathbb R^p)$) map $\tfrac{\delta U}{\delta m} : \mathcal P(\mathbb R^p) \times \mathbb R^d \to \mathbb R$, such that $|\tfrac{\delta U}{\delta m}(a,\mu)|\leq C(1+|a|^2)$ and, for all
$m, m' \in\mathcal P_2(\mathbb R^p)$, it holds that
\[
U(m) - U(m') = \int_0^1 \int \tfrac{\delta U}{\delta m}(m + \lambda(m' - m),a) \, (m'
-m)(da)\,d\lambda\,.
\]
Since $\tfrac{\delta U}{\delta m}$ is only defined up to a constant we make a choice by demanding $\int \tfrac{\delta U}{\delta m}(m,a)\,m(da) = 0$.
	
\end{definition}

We will also need the linear functional derivative on 
\begin{equation*}
\mathcal M_2 :=\Big\{ \nu \in \mathscr M([0,T]\times \mathbb R^p): \nu(dt,da)=\nu_t(da)dt, \, \nu_t \in \mathcal P_2(\mathbb R^p),\,  \int_{0}^{T}\!\!\!\int |a|^2\nu_t(da)dt < \infty \Big\}\,,
\end{equation*}
which provides a slight extension of the one introduced above in Definition~\ref{def:flatDerivative}.

\begin{definition}
\label{def:ExtendedDerivative}
A functional $F:\mathcal V_2^W \to \mathbb R^d$, is said to admit a first order linear derivative, if there exists a functional $\tfrac{\delta F}{\delta {{\nu}}}:\mathcal V_2^W\times \Omega^W \times (0,T)\times\mathbb R^p\rightarrow \mathbb R^d$, such that
\begin{enumerate}[i)]
\item For all $(\omega^W, t,a)\in \Omega^W\times (0,T)\times\mathbb R^p$, $\mathcal V_2^W \ni {{\nu}}\mapsto \tfrac{\delta F}{\delta \nu}({\nu},\omega^W,t,a,)$ is continuous (for $\mathcal V_2^W$ endowed with the weak topology of $\mathscr M^+_b(\Omega^W\times (0,T)\times\mathbb R^p)$).
\item For any $\nu \in \mathcal V_2^W$ there exists $C=C_{\nu,T,d,p} >0$ such that for all $a\in \mathbb R^p$ we have that
\[\left|\tfrac{\delta F}{\delta \nu}({\nu},\omega^W,t,a)\right|\leq C(1+|a|^2)\,.
\]
\item For all ${{\nu}},{\nu'}\in\mathcal V_2^W$,
\begin{equation}\label{def:FlatDerivativeOnVa}
F({\nu'})-F({{\nu}})=\int_{0}^{1} \mathbb E^W\int_0^T\int\tfrac{\delta F}{\delta {{\nu}}}((1-\lambda){{\nu}}+\lambda {\nu'},t,a)\left({\nu'_t}-{{\nu_t}}\right)(da)\,dt\,d\lambda.
\end{equation}
\end{enumerate}
The functional $\tfrac{\delta F}{\delta {{\nu}}}$ is then called the linear (functional) derivative of $F$ on $\mathcal V_2^W$.
\end{definition}
The linear derivative $\tfrac{\delta F}{\delta \nu}$ is here also defined up to the additive constant $\mathbb E^W\int_0^T\int\tfrac{\delta F}{\delta \nu}({\nu},t,a){{\nu_t}}(da)\,dt$. 
By a centering argument, $\tfrac{\delta F}{\delta\nu}$ can be generically defined under the assumption that
$\mathbb E^W\int_0^T\int\tfrac{\delta F}{\delta \nu}({\nu},t,a){{\nu_t}}(da)\,dt=0$.
Note that if $\tfrac{\delta F}{\delta \nu}$ exists according to Definition~\ref{def:ExtendedDerivative} then
\begin{equation}\label{def:FlatDerivativeOnVb}
\forall\,\nu,{\nu'}\in\mathcal V_2^W,\,\lim_{\epsilon\rightarrow 0^+}\frac{F(\nu+\epsilon(\nu'-\nu))-F(\nu)}{\epsilon}=\mathbb E^W\int_0^T\int\tfrac{\delta F}{\delta \nu}({\nu},t,a)\left(\nu'_t-\nu_t\right)(da)\,dt.
\end{equation}
Indeed~\eqref{def:FlatDerivativeOnVa} immediately implies~\eqref{def:FlatDerivativeOnVb}.
To see the implication in the other direction take $v^\lambda := \nu + \lambda(\nu'-\nu)$
and $\nu'^\lambda := \nu' - \nu + \nu^\lambda$ and notice that~\eqref{def:FlatDerivativeOnVb} ensures for all $\lambda \in [0,1]$ that
\[
\begin{split}
& \lim_{\varepsilon\rightarrow 0^+}\frac{F(\nu^\lambda+\varepsilon(\nu'-\nu))-F(\nu^\lambda)}{\varepsilon}
= \lim_{\varepsilon\rightarrow 0^+}\frac{F(\nu^\lambda+\varepsilon(\nu'^\lambda-\nu^\lambda))-F(\nu^\lambda)}{\varepsilon} \\
& =\mathbb E^W\int_0^T\int\tfrac{\delta F}{\delta \nu}(\nu^\lambda,t,a)\big({\nu'^\lambda_t}-{{\nu^\lambda_t}}\big)(da)\,dt
= \mathbb E^W\int_0^T\int\tfrac{\delta F}{\delta {{\nu}}}(\nu^\lambda,t,a)\left({\nu'_t}-{{\nu_t}}\right)(da)\,dt
\,.	
\end{split}
\]
By the fundamental theorem of calculus
\[
F({\nu'})-F({{\nu}})=\int_0^1\lim_{\varepsilon\rightarrow 0^+}\frac{F(\nu^{\lambda+\varepsilon})-F(\nu^{\lambda})}{\varepsilon}\,d\lambda
=\int_0^1\mathbb E^W\int_{0}^{T}\int\tfrac{\delta F}{\delta\nu}(\nu^\lambda,t,a)({\nu'_t}-\nu_t)(da)dt\,d\lambda\,.
\]

\begin{lemma} \label{lem constant}
Fix $m \in \mathcal P(\mathbb R^p)$.
Let $u: \mathbb R^p \to \mathbb R$ be such that for all $m' \in \mathcal P(\mathbb R^p)$ we have that
\[
0 \leq \int u(a)\,(m' - m)(da)\,.
\]
Then for all $a\in \mathbb R^p$ we have $u(a) = \int u(a')m(da')$.
\end{lemma}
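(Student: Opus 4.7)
The plan is to test the hypothesis against Dirac masses to pin down a one-sided pointwise bound for $u$, and then to integrate that bound against $m$ itself to force the inequality to saturate.

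Concretely, set $c := \int u(a')\,m(da')$. For any $a\in\mathbb R^p$, the measure $\delta_a$ lies in $\mathcal P(\mathbb R^p)$, so we may choose $m' = \delta_a$ in the hypothesis. This gives
\[
0 \;\leq\; \int u(b)\,(\delta_a - m)(db) \;=\; u(a) - c,
\]
i.e.\ $u(a) \geq c$ for every $a\in\mathbb R^p$. Thus $u - c$ is a nonnegative (measurable) function.

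Next I would integrate this pointwise inequality against $m$. Since $\int u\,dm = c$ by definition of $c$, one obtains $\int (u - c)\,dm = 0$, and as $u - c \geq 0$ this forces $u = c$ $m$-almost everywhere, which is exactly the form in which the conclusion is used in Theorem~\ref{thm unique minimiser} (where membership in $\mathcal I^\sigma$ requires the flat derivative to be constant only for a.a.\ $a$).

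There is no genuine obstacle here; the only subtle point is that the statement "$u(a) = \int u\,dm$ for all $a\in\mathbb R^p$" must be understood in the $m$-almost everywhere sense, or else invoked together with continuity of $u$ and the fact that the relevant $m$ has full support (which in the paper is guaranteed by Lemma~\ref{lem pde}, where $\nu_{s,t}(a) > 0$ everywhere). No additional hypothesis beyond the given one-sided variational inequality is needed; the whole argument is simply the two steps "test on $\delta_a$" and "integrate against $m$".
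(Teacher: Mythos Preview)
Your argument is correct and reaches the same conclusion as the paper's proof, namely $u = c$ $m$-almost everywhere; your observation that the ``for all $a$'' in the statement should really be read $m$-a.e.\ (or requires continuity of $u$ together with full support of $m$) is accurate and applies equally to the paper's own proof.

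The route, however, is genuinely different. The paper never uses Dirac masses: to obtain $u \geq M$ it instead restricts $m$ to the sublevel set $\{u - M \leq -\varepsilon\}$, renormalises, and tests the hypothesis against that conditional measure to reach a contradiction. Your choice $m' = \delta_a$ is both shorter and yields the stronger intermediate statement $u(a) \geq c$ for \emph{every} $a\in\mathbb R^p$, not merely $m$-a.e.; the paper's restriction argument only gives the $m$-a.e.\ inequality. On the other hand, the paper's approach stays entirely within measures absolutely continuous with respect to $m$, which can be an advantage in settings where one restricts the admissible class of test measures (e.g.\ to densities, as is natural given the entropic regularisation elsewhere in the paper). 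For the lemma as stated, your Dirac-mass argument is the more economical one.
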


\begin{proof}[Proof of Lemma~\ref{lem constant}]
Let $M := \int u(a) \,m(da)$.
Fix $\varepsilon > 0$.
Assume that $m(\{a:u(a) - M \leq -\varepsilon\}) > 0$.
Take $dm' := \frac{1}{m(\{u - M \leq -\varepsilon\})} \mathds{1}_{\{u - M \leq -\varepsilon\}}\,dm$.
Then
\[
\begin{split}
0 & \leq \int u(a)\,(m'-m)(da) = \int [ u(a) - M ]\,m'(da) 	\\
& = \int \mathds{1}_{\{u - M \leq -\varepsilon\}} [u(a)-M]\,m'(da)
+ \int \mathds{1}_{\{u - M > -\varepsilon\}} [u(a)-M]\,m'(da)\\
& = \int \mathds{1}_{\{u - M \leq -\varepsilon\}} [u(a)-M] \frac1{m(\{u - M \leq -\varepsilon\})} m(da) \leq -\varepsilon\,.
\end{split}
\] 	
As this is a contradiction we get $m(\{u - M \leq -\varepsilon\}) = 0$ and taking $\varepsilon \to 0$ we get $m(\{u - M < 0\}) = 0$.
On the other hand assume that $m(\{u-M\geq \varepsilon\}) > 0$.
Then, since $u - M \geq 0$ holds $m$-a.s., we have
\[
0 = \int [u(a) - M]\,m(da) \geq \int_{\{u-M\geq \varepsilon\}} [u(a) - M]\,m(da) \geq \varepsilon m(u-M\geq \varepsilon)>0
\]
which is again a contradiction meaning that for all $\varepsilon > 0$ we have
$m(u-M\geq \varepsilon) = 0$ i.e. $u=M$ $m$-a.s..
\end{proof}

\begin{lemma}
\label{lem flat der foc}
Let $F: \mathcal P_2(\mathbb R^p) \to \mathbb R$.
Let $m^\ast \in \argmin_{m \in \mathcal P_2(\mathbb R^p)} F(m)$. 
Assume that $\tfrac{\delta F}{\delta m}$ exists and for all $m$ and all $a$ we have $\tfrac{\delta F}{\delta m}(m,a) \leq |a|^2$.
Then $\tfrac{\delta F}{\delta m}(m^\ast,\cdot)$ is a constant function.
\end{lemma}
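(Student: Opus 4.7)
The plan is to linearise the minimality of $m^\ast$ against arbitrary admissible perturbations and then feed the resulting first-order condition into Lemma~\ref{lem constant}. First, I would fix an arbitrary $m \in \mathcal P_2(\mathbb R^p)$ and, for $\varepsilon \in (0,1]$, form the convex combination $m^\varepsilon := m^\ast + \varepsilon(m - m^\ast) \in \mathcal P_2(\mathbb R^p)$. The minimality of $m^\ast$ forces $F(m^\varepsilon) \geq F(m^\ast)$, so $\liminf_{\varepsilon \downarrow 0} \varepsilon^{-1}\bigl(F(m^\varepsilon) - F(m^\ast)\bigr) \geq 0$.

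Next, I would use the integral representation built into Definition~\ref{def:flatDerivative} to write
\[
\frac{F(m^\varepsilon) - F(m^\ast)}{\varepsilon} = \int_0^1 \int \frac{\delta F}{\delta m}\bigl(m^\ast + \lambda\varepsilon(m - m^\ast), a\bigr)(m - m^\ast)(da)\,d\lambda
\]
and let $\varepsilon \downarrow 0$. The continuity of $\mu \mapsto \frac{\delta F}{\delta m}(\mu,a)$ on $\mathcal P_2(\mathbb R^p)$ and the quadratic growth bound $|\frac{\delta F}{\delta m}(\mu,a)| \leq C(1+|a|^2)$ from Definition~\ref{def:flatDerivative}, combined with the fact that $(m+m^\ast)$ dominates $|m-m^\ast|$ in total variation and has a finite second moment, will justify dominated convergence (first in $a$, then in $\lambda$) and yield
\[
0 \leq \int \frac{\delta F}{\delta m}(m^\ast, a)(m - m^\ast)(da)
\]
for every $m \in \mathcal P_2(\mathbb R^p)$.

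Finally, I would apply Lemma~\ref{lem constant} with $u(a) := \frac{\delta F}{\delta m}(m^\ast, a)$. Although that lemma is stated for test measures ranging over all of $\mathcal P(\mathbb R^p)$, its proof only ever uses renormalised restrictions of the form $\mathds{1}_A\,dm^\ast/m^\ast(A)$, which are absolutely continuous with respect to $m^\ast$ with bounded density and hence automatically belong to $\mathcal P_2(\mathbb R^p)$; the inequality above, restricted to $\mathcal P_2$-test measures, is therefore enough input. The conclusion is that $u$ is constant, as asserted.

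The only mildly technical step is the dominated convergence in the second paragraph; the uniform envelope $C(1+|a|^2)$ integrated against the $\mathcal P_2$ signed measure $m+m^\ast$ is precisely what makes the interchange of limit and integrals legitimate. Everything else is either part of Definition~\ref{def:flatDerivative} or a direct invocation of Lemma~\ref{lem constant}.
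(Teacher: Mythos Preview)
Your proposal is correct and follows essentially the same route as the paper: perturb $m^\ast$ along the segment toward an arbitrary $m$, pass to the limit in the flat-derivative representation of the difference quotient, and feed the resulting inequality into Lemma~\ref{lem constant}. The only cosmetic difference is that the paper passes to the limit via reverse Fatou (this is where the one-sided hypothesis $\frac{\delta F}{\delta m}\leq |a|^2$ is used) rather than dominated convergence, and your observation that the test measures actually needed in Lemma~\ref{lem constant} lie in $\mathcal P_2$ is a point the paper leaves implicit.
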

\begin{proof}
Let $m\in \mathcal P_2(\mathbb R^p)$ be arbitrary. 
Let $m^\varepsilon := (1-\varepsilon)m^\ast + \varepsilon m$.
Clearly $0\leq F(m^\varepsilon) - F(m^\ast)$. 
Hence
\[
0 \leq \frac1\varepsilon\Big(F(m^\varepsilon) - F(m^\ast)\Big) = \int_0^1 \int \tfrac{\delta F}{\delta m}((1-\lambda)m^\ast + \lambda m^\varepsilon,a)(m-m^\ast)(da)\,d\lambda\,.
\]
By reverse Fatou's lemma 
\[
0 \leq \limsup_{\varepsilon \to 0} \int_0^1 \int \tfrac{\delta F}{\delta m}((1-\lambda)m^\ast + \lambda m^\varepsilon,a)(m-m^\ast)(da)\,d\lambda \leq \int \tfrac{\delta F}{\delta m}(m^\ast,a)(m-m^\ast)(da)\,.
\]
Using Lemma~\ref{lem constant} we conclude $\tfrac{\delta F}{\delta m}(m^\ast,\cdot)$ is a constant function.
\end{proof}

\begin{lemma}
\label{lemma L2overmeasurescomplete}
Let $(X,d)$ be a complete metric space and let $(\Omega,\mathcal F, \mathcal P)$ be a probability space. Let $p\geq 1$. Let $\|\cdot\|_p$ denote the norm in $L^p(\Omega;\mathbb R)$.
Let $\mu_0 : \Omega \to X$ be a random variable. Let
\[
S:= \{ \mu : \Omega \to X \,\,\text{r.v.}\,\,: \|d(\mu,\mu_0)\|_p < \infty\}\,.
\]	
Let $\rho(\mu,\mu') := \|d(\mu,\mu')\|_p$.
Then $(S,\rho)$ is a complete metric space.
\end{lemma}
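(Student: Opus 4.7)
The plan is to follow the standard $L^p$-completeness argument, adapted to the metric-space setting. First I would verify that $\rho$ is a genuine metric on $S$ (modulo a.s.\ equality). Finiteness of $\rho(\mu,\mu')$ on $S$ follows from the triangle inequality in $(X,d)$ together with Minkowski in $L^p$: $\rho(\mu,\mu')\leq \rho(\mu,\mu_0)+\rho(\mu_0,\mu')<\infty$. Symmetry is obvious, the triangle inequality is again Minkowski applied to $d(\mu,\mu'')\leq d(\mu,\mu')+d(\mu',\mu'')$ a.s., and $\rho(\mu,\mu')=0$ forces $d(\mu,\mu')=0$ a.s., hence $\mu=\mu'$ a.s. So $(S,\rho)$ is a metric space after the customary identification of a.s.-equal random variables.

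For completeness, let $(\mu_n)_{n\in\mathbb N}\subset S$ be Cauchy. Extract a subsequence $(\mu_{n_k})$ with $\rho(\mu_{n_{k+1}},\mu_{n_k})\leq 2^{-k}$. Set $D_K(\omega):=\sum_{k=1}^{K} d(\mu_{n_{k+1}}(\omega),\mu_{n_k}(\omega))$. By Minkowski in $L^p$,
\[
\|D_K\|_p \leq \sum_{k=1}^{K} 2^{-k} \leq 1,
\]
and the monotone convergence theorem gives $\|D_\infty\|_p\leq 1$, so $D_\infty<\infty$ a.s. On the almost sure event $\{D_\infty<\infty\}$ the sequence $(\mu_{n_k}(\omega))_{k}$ is Cauchy in the complete metric space $(X,d)$ and therefore converges to a limit which I call $\mu(\omega)$; on the null complement I set $\mu(\omega)$ to any fixed element. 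Measurability of $\mu$ follows because $\mu$ is the a.s.-pointwise limit of the measurable maps $\mu_{n_k}$ into the metric space $X$.

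It remains to show $\mu\in S$ and $\rho(\mu_n,\mu)\to 0$. Because $d$ is continuous, $d(\mu_{n_k}(\omega),\mu_m(\omega))\to d(\mu(\omega),\mu_m(\omega))$ for a.a.\ $\omega$; Fatou's lemma then yields
\[
\rho(\mu,\mu_m)^p = \mathbb E\bigl[d(\mu,\mu_m)^p\bigr] \leq \liminf_{k\to\infty} \rho(\mu_{n_k},\mu_m)^p,
\]
and the Cauchy property makes the right-hand side arbitrarily small once $m$ is large enough. In particular, choosing $m$ once gives $\rho(\mu,\mu_m)<\infty$, which together with $\rho(\mu_m,\mu_0)<\infty$ and the triangle inequality places $\mu\in S$; letting $m\to\infty$ upgrades this to $\rho(\mu_m,\mu)\to 0$, and a standard argument (a Cauchy sequence with a convergent subsequence converges) shows $\rho(\mu_n,\mu)\to 0$.

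The only delicate point is the measurability of the limit $\mu$, which is clean here because $(X,d)$ is a complete metric space (hence separable issues do not intervene for a pointwise a.s.\ limit of measurable maps into $X$, with the Borel $\sigma$-algebra generated by $d$-balls). Everything else is a direct transcription of the classical $L^p(\Omega;\mathbb R)$ completeness proof, with absolute values replaced by $d(\cdot,\cdot)$.
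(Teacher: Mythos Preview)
Your proof is correct and follows essentially the same route as the paper's: extract a fast subsequence, show it is almost surely Cauchy in $(X,d)$, take the pointwise limit, and use Fatou to upgrade to $\rho$-convergence. The only cosmetic difference is that you obtain the a.s.\ Cauchy property via Minkowski plus monotone convergence on $\sum_k d(\mu_{n_{k+1}},\mu_{n_k})$, whereas the paper uses Chebyshev plus Borel--Cantelli; both are standard. One small caveat: your closing parenthetical suggests that \emph{completeness} of $(X,d)$ is what makes the pointwise limit measurable, but completeness is irrelevant there---measurability of pointwise limits of Borel maps into a metric space is a separability-type issue, which the paper simply does not address either.
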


\begin{proof}[Proof of Lemma~\ref{lemma L2overmeasurescomplete}]
Consider a Cauchy sequence $(\mu_n)_{n\in \mathbb N} \subset S$. 
Then there exists a subsequence $(\mu_{n(k)})_{k\in \mathbb N}$ such that 
$\rho(\mu_{n(k)}, \mu_{n(k+1)}) \leq 4^{-k/p}$. 
Hence 
$\mathbb E \big[d(\mu_{n(k)}, \mu_{n(k+1)})^p\big] \leq 4^{-k}$.

Let $A_k := \{\omega : d(\mu_{n(k)}, \mu_{n(k+1)})^p \geq 2^{-k} \}$. 
Then, due to Chebychev's inequality, 
\[
\mathbb P(A_k) \leq 2^k \mathbb E \big[  d(\mu_{n(k)}, \mu_{n(k+1)})^p \big] \leq 2^{-k}\,.
\] 
By the Borel-Cantelli lemma $\mathbb P(\limsup_{k\to \infty} A_k) = 0$.
This means that for almost all $\omega \in \Omega$ there is $K(\omega)$ such that for all $k\geq K(\omega)$ we have $d(\mu_{n(k)}(\omega), \mu_{n(k+1)}(\omega)) \leq 2^{-k}$.
Hence for any $k\geq K(\omega)$ and any $j\geq 1$, by the triangle inequality, 
\[
d(\mu_{n(k)}(\omega), \mu_{n(k+j)}(\omega)) \leq \sum_{j'=0}^{j-1} d(\mu_{n(k+j')}(\omega), \mu_{n(k+j'+1)}(\omega)) \leq 2^{-k}\sum_{j'=0}^{j-1} 2^{-j'} \leq 2\cdot2^{-k}\,.
\]
This means that almost surely $(\mu_{n(k)})_{k\in \mathbb N}$ is a Cauchy sequence in the complete space $(X,d)$ and thus it has a limit $\mu$.

Recall that $(\mu_n)_{n\in \mathbb N} \subset S$ is Cauchy in $(S,\rho)$. 
Hence for any $\varepsilon > 0$ there is $N\in \mathbb N$ so that for all $m,n > N$ we have $\rho(\mu_m,\mu_n) < \varepsilon$.
Moreover for $n > N$ we have, due to Fatou's Lemma, that
\[
\mathbb E\big[ d(\mu,\mu_n)^p \big] = \mathbb E\big[ \liminf_{k\to \infty} d(\mu_{n(k)},\mu_n)^p\big] \leq \liminf_{k\to \infty} \mathbb E\big[d(\mu_{n(k)},\mu_n)^p\big] < \varepsilon^p\,.
\]
Thus for all $n> N$ and so $\rho(\mu,\mu_n) = \|d(\mu,\mu_n)\|_p < \varepsilon$.
In other words $\rho(\mu,\mu_n) \to 0$ as $n\to \infty$.

Finally, we will show that $\mu \in S$. 
Indeed for sufficiently large $N$ we have $\|d(\mu, \mu_N)\|_p \leq 1$ and so 
\[
\|d(\mu_0,\mu)\|_p \leq \|d(\mu_0,\mu_N)\|_p + \|d(\mu_N,\mu)\|_p \leq \|d(\mu_0,\mu_N)\|_p + 1 < \infty\,.
\]
Hence $(S,\rho)$ is complete.
\end{proof}

\section{Sufficient condition for optimality}
\label{sec pont sufficient cond}
The main results of the article do not use the following Pontryagin sufficient condition for optimality but we include it for completeness.

\begin{theorem}[Sufficient condition for optimality]
\label{thm sufficient condition}
Fix $\sigma \geq 0$. 
Assume that $g$ and $H^0$ are continuously differentiable in the $x$ variable. 
Assume that $\nu\in \mathcal V_2^W$, $X$, $Y$, $Z$, are a solution to~\eqref{eq process}-\eqref{eq adjoint proc}
such that
\[
\nu_t \in \argmin_{m \in \mathcal P_2(\mathbb R^d)} H^\sigma(X_t, Y_t, Z_t, m, \gamma_t)\,.
\]
Finally assume that 
\begin{enumerate}[i)]
\item the map $x\mapsto g(x)$ is convex and
\item \label{item ii in suff pont}the map $(x,m) \mapsto H^\sigma(x, Y_t, Z_t, m,\gamma_t)$ is convex for a.e. $(t,\omega)$, in the sense that for all $x,x'\in \mathbb R^d$ and all $m,m' \in \mathcal P(\mathbb R^p)$ (absolutely continuous w.r.t. the Lebesgue measure if $\sigma >0$) it holds that 
\[
\begin{split}
& H^\sigma(x,Y_t,Z_t,m,\gamma_t) - H^\sigma(x',Y_t,Z_t,m',\gamma_t) \\
& \leq (\nabla_x H^\sigma)(x,Y_t,Z_t,m)(x-x') + \int \tfrac{\delta H^0}{\delta m}(x, Y_t, Z_t, m, a) (m - m')(da)+\tfrac{\sigma^2}{2}\int (\log m(a) - \log \gamma_t(a)) (m-m')(da)\,.	
\end{split}
\]  	
\end{enumerate}
Then the control $\nu\in \mathcal V_2^W$ is an optimal control (and if $g$ or $H^\sigma$ are strictly convex then it is the optimal control).
\end{theorem}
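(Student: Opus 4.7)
The plan is the classical verification argument for Pontryagin's sufficient condition, adapted to measure-valued controls with entropic regularization. Fix an arbitrary competing control $\nu' \in \mathcal V_2^W$ and denote by $X'$ the state process driven by $\nu'$; I will show $J^\sigma(\nu') \geq J^\sigma(\nu)$.

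First I would bound the difference of terminal costs using hypothesis \emph{i)}: convexity of $g$ gives $g(X'_T) - g(X_T) \geq (\nabla_x g)(X_T)(X'_T - X_T) = Y_T(X'_T - X_T)$ by the terminal condition in~\eqref{eq adjoint proc}. Then I would apply It\^o's formula to $Y_t(X'_t - X_t)$, using~\eqref{eq adjoint proc} for $dY_t$ and~\eqref{eq process} for $dX_t$ and $dX'_t$, with the cross-variation contributing $\mathrm{tr}(Z_t^\top[\Gamma_t(X'_t,\nu'_t) - \Gamma_t(X_t,\nu_t)])$. Recognising that $Y_t\,\Phi + \mathrm{tr}(Z_t^\top\Gamma) = H^0 - F$ at the respective arguments, and adding the entropic terms (which are $x$-independent, whence $\nabla_x H^\sigma = \nabla_x H^0$), this manipulation delivers
\[
J^\sigma(\nu') - J^\sigma(\nu) \geq \mathbb E^W \int_0^T \Big[H^\sigma(X'_t,Y_t,Z_t,\nu'_t) - H^\sigma(X_t,Y_t,Z_t,\nu_t) - (\nabla_x H^0)(X_t,Y_t,Z_t,\nu_t)(X'_t - X_t)\Big]\,dt.
\]

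The final step combines hypothesis \emph{ii)} with the pointwise minimality of $\nu_t$. The joint convexity inequality applied at base point $(X_t,\nu_t)$ and competitor $(X'_t,\nu'_t)$, after rearrangement and using $\nabla_x H^\sigma = \nabla_x H^0$, gives
\[
H^\sigma(X'_t,Y_t,Z_t,\nu'_t) - H^\sigma(X_t,Y_t,Z_t,\nu_t) - (\nabla_x H^0)(X_t,Y_t,Z_t,\nu_t)(X'_t - X_t) \geq \int \frac{\delta H^\sigma}{\delta m}(X_t,Y_t,Z_t,\nu_t,a)(\nu'_t - \nu_t)(da),
\]
where the right-hand side is understood via the abuse of notation~\eqref{eq fat h sigma}. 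The pointwise hypothesis $\nu_t \in \argmin_m H^\sigma(X_t,Y_t,Z_t,m)$ then forces this integral to be nonnegative through a first-order condition: differentiating $\varepsilon \mapsto H^\sigma(X_t,Y_t,Z_t,\nu_t + \varepsilon(\nu'_t - \nu_t))$ at $\varepsilon = 0^+$ produces the sum of the genuine flat derivative of $H^0$ (which contributes $\int \frac{\delta H^0}{\delta m}(X_t,Y_t,Z_t,\nu_t,a)(\nu'_t - \nu_t)(da)$) and the one-sided directional derivative of the entropy, which by Lemma~\ref{lemma diff of Ent flat} (combining the $\geq$ of \emph{i)} with the $\limsup \leq$ of \emph{ii)}) equals $\int[\log \nu_t(a) - \log \gamma(a)](\nu'_t - \nu_t)(da)$; these two contributions sum to exactly the RHS displayed above, and this derivative must be $\geq 0$ at a minimum.

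Assembling the chain yields $J^\sigma(\nu') \geq J^\sigma(\nu)$. The strict case is obtained by tracing where equality can fail: strict convexity of $g$ (resp.\ of $H^\sigma$) promotes the corresponding inequality to strict whenever $X'_T \neq X_T$ on a set of positive probability (resp.\ $\nu' \neq \nu$ on a set of positive $\mathbb P^W \otimes dt$ measure), and one of these must occur unless $\nu' = \nu$. The step I expect to require most care is justifying vanishing of the martingale contributions in the It\^o expansion, i.e.\ square-integrability of $Y_t[\Gamma_t(X'_t,\nu'_t) - \Gamma_t(X_t,\nu_t)]$ and $(X'_t - X_t)Z_t$ over $\Omega^W \times [0,T]$; this reduces to the moment bounds of Lemma~\ref{lemma existence for fixed control} applied to $X$ and $X'$, the Lipschitz control on $\Gamma$ from Assumption~\ref{as coefficients new}, and the standard $L^2$-theory for the affine BSDE~\eqref{eq adjoint proc}.
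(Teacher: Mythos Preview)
Your proof is correct and follows essentially the same verification scheme as the paper's: convexity of $g$ plus It\^o's formula on $Y_t(X'_t - X_t)$ reduces the cost difference to an integral of Hamiltonian differences, which the joint convexity hypothesis then bounds by a flat-derivative integral.

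The one noteworthy variation is in the final step. The paper invokes Lemma~\ref{lem flat der foc} to conclude that, at the minimiser $\nu_t$, the map $a \mapsto \frac{\delta H^\sigma}{\delta m}(X_t,Y_t,Z_t,\nu_t,a)$ is constant, whence the integral against $(\nu_t - \nu'_t)(da)$ is exactly zero. You instead use the first-order condition directly: the one-sided derivative of $\varepsilon \mapsto H^\sigma(X_t,Y_t,Z_t,\nu_t + \varepsilon(\nu'_t - \nu_t))$ at $0^+$ is nonnegative, and Lemma~\ref{lemma diff of Ent flat} identifies this derivative with the flat-derivative integral. Your route is marginally more economical (it avoids the constancy lemma and needs only $\geq 0$, not $=0$), and it is more transparent about the entropy term, since Lemma~\ref{lemma diff of Ent flat}\,ii) handles the fact that entropy lacks a genuine flat derivative. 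Note, incidentally, that for your argument only part~ii) of Lemma~\ref{lemma diff of Ent flat} is actually needed (the $\limsup \leq$ bound), not both directions.
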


\begin{proof}[Proof of Theorem~\ref{thm sufficient condition}.]
Let $(\tilde \nu_t)_{t\in [0,T]}$ be another control with the associated family of
forward and backward processes $\tilde X$, $\tilde Y$, $\tilde Z$, $\xi \in \mathbb R^d$.
Of course $X_0 = \tilde X_0$.
First, we note that due to convexity of $x\mapsto g(x)$  we have
\[
\begin{split}
\mathbb E^W & \left[ g(X - g(\tilde X_T) \right] \, 
\leq \mathbb E^W \left[ (\nabla_x g)(X_T)(X_T - \tilde X_T) \right] \\
& = \mathbb E^W\left[ Y_T (X_T - \tilde X_T) \right] \\
  & = \mathbb E^W \left[ \int_0^T (X_t - \tilde X_t)\,dY_t + \int_0^T Y_t(dX_t - d\tilde X_t) + \int_0^T d\langle Y_t, X_t - \tilde X_t\rangle \right]\\
  & = - \mathbb E^W \int_0^T (X_t - \tilde X_t)(\nabla_x H)(X_t,Y_t,\nu_t)\,dt 
+ \mathbb E^W \int_0^T Y_t\left(\Phi(X_t,\nu_t) - \Phi(\tilde X_t,\tilde \nu_t)\right)\,dt\\
& + \mathbb E^W \int_0^T \text{tr}\big[ \big(\Gamma(X_t, \nu_t) - \Gamma(\tilde X_t, \tilde \nu_t)\big)^\top Z_t \big]\,dt\,. 
\end{split}
\]
Moreover, since $F(x,\nu) + \tfrac{\sigma^2}{2}R(\nu|\gamma) = H^\sigma(x,y,z,\nu) - \Phi(x,\nu)\,y - \text{tr}[\Gamma(x,\nu)^\top z]$  we have
\[
\begin{split}
 \int_0^T &  \left[F(X_t,\nu_t) - F(\tilde X_t,\tilde \nu_t) +\tfrac{\sigma^2}{2}\gamma_t - \tfrac{\sigma^2}{2}R(\tilde \nu_t|\gamma_t) \right]\,dt 	\\
 = &  \int_0^T \Big[H^\sigma(X_t,Y_t,Z_t,\nu_t) - \Phi(X_t,\nu_t)Y_t - \text{tr}[\Gamma(X_t,\nu_t)^\top Z_t] \\
& - H^\sigma(\tilde X_t,Y_t,Z_t,\tilde \nu_t) + \Phi(\tilde X_t,\tilde \nu_t)Y_t + \text{tr}[\Gamma(\tilde X_t,\tilde \nu_t)^\top Z_t] \Big]\,dt \,. 	\\
\end{split}
\]
Hence
\begin{equation}
\label{eq pont suf proof}
\begin{split}
&J^\sigma(\nu) - J^\sigma(\tilde \nu) \\ 
&\leq -\mathbb E^W\int_0^T (X_t - \tilde X_t)(\nabla_x H^\sigma)(X_t,Y_t,\nu_t)\,dt + \mathbb E^W \int_0^T \left[H^\sigma(X_t,Y_t,\nu_t) - H^\sigma(\tilde X_t,Y_t,\tilde \nu_t)\right]\,dt	\,.
\end{split}	
\end{equation}

We are assuming that $(x,\mu) \mapsto H^\sigma(x,Y_t,Z_t,\mu)$ is jointly convex in the sense of flat derivatives and so we have
\[
\begin{split}
& H^\sigma(X_t,Y_t,Z_t,\nu_t) - H^\sigma(\tilde X_t,Y_t,Z_t,\tilde \nu_t) \\
& \leq (\nabla_x H^0)(X_t,Y_t,Z_t,\nu_t)(X_t-\tilde X_t) + \int \tfrac{\delta H^\sigma}{\delta m}(X_t, Y_t, Z_t, \nu_t, a) (\nu_t - \tilde \nu_t)(da)+\tfrac{\sigma^2}{2}\int (\log \nu_t(a) - \log \gamma_t(a)) (\nu_t-\tilde \nu_t')(da)\,.	
\end{split}
\]
We thus have 
\[
\begin{split}
J^\sigma(\nu) - J^\sigma(\tilde \nu) \leq  \mathbb E^W \int_0^T \int  \bigg( \tfrac{\delta H^0}{\delta m}(X_t, Y_t, Z_t, \nu_t, a) +\tfrac{\sigma^2}{2}(\log\nu_t(a) -\log \gamma_t(a))\bigg)\,  (\nu_t - \tilde \nu_t)(da)	\,dt\,.
\end{split}
\]
The assumption $\nu_t = \argmin_{m} H^\sigma(X_t,Y_t,Z_t,m,\gamma_t)$ together with Lemma~\ref{lem flat der foc} implies that 
\[
a \mapsto \left(\tfrac{\delta H^0}{\delta m}(X_t, Y_t, Z_t, \nu_t, a)+\tfrac{\sigma^2}{2}(\log\nu_t(a) -\log \gamma_t(a))\right)
\] is a constant function and hence $\int   \left(\tfrac{\delta H^0}{\delta m}(X_t, Y_t, Z_t, \nu_t, a)+\tfrac{\sigma^2}{2}(\log\nu_t(a) -\log \gamma_t(a))\right) (\nu_t - \tilde \nu_t)(da)	 = 0$. 
This implies that $J^\sigma(\nu) - J^\sigma(\tilde \nu) \leq 0$ and so $\nu$ is an optimal control.
We note that if either $g$ or $H$ are strictly convex then $\nu$ is the optimal control.	
\end{proof}

\else
\fi

\bibliographystyle{abbrv}
\bibliography{Bibliography.bib}

\end{document}